\documentclass[12pt]{article}
\hoffset=-1truecm
\textwidth145mm
\textheight250mm
\topmargin-2cm
\usepackage{amssymb}
\usepackage{amsmath}
\usepackage{latexsym}
\usepackage{mathrsfs}
\usepackage{graphicx}
\usepackage{caption}
\usepackage{comment}
\usepackage{amsthm}
\usepackage[titletoc,title]{appendix}
\usepackage{hyperref}
\usepackage{color}
\allowdisplaybreaks[1]

\numberwithin{equation}{section}
\newtheorem{theorem}[equation]{Theorem}
\newtheorem{proposition}[equation]{Proposition}

\newtheorem{lemma}[equation]{Lemma}

\providecommand{\norm}[1]{\left\lVert#1\right\rVert}

%~~~~~~~~~~~~~~~~~~~~~~~~~~~~~~~~~~~~~~~~~~~~~~~~~~~~~~~
%

\title{On vibrating thin membranes with mass concentrated near the boundary: an asymptotic analysis}

\author{Matteo Dalla Riva\thanks{Department of Mathematics, The University of Tulsa, Tulsa, Oklahoma 74104, USA.}\ \thanks{\small Department of Mathematics, Aberystwyth University, Ceredigion SY23 3BZ, Wales, UK.}\,\, and Luigi Provenzano\thanks{EPFL, SB Institute of Mathematics, Station 8, CH-1015 Lausanne, Switzerland.}\ \thanks{Corresponding author}}

\date{\ }

\begin{document}

\newcommand{\rea}{\mathbb{R}}

\maketitle

\noindent
{\bf Abstract:}
We consider the spectral problem
\begin{equation*}
\left\{\begin{array}{ll}
-\Delta u_{\varepsilon}=\lambda(\varepsilon)\rho_{\varepsilon}u_{\varepsilon} & {\rm in}\ \Omega\\
\frac{\partial u_{\varepsilon}}{\partial\nu}=0 & {\rm on}\ \partial\Omega
\end{array}\right.
\end{equation*}
in a smooth bounded domain $\Omega$ of $\mathbb R^2$. The  factor $\rho_{\varepsilon}$ which appears in the first equation plays the role of a mass density and it is equal to a constant of order $\varepsilon^{-1}$ in an $\varepsilon$-neighborhood of the boundary and to a constant of order $\varepsilon$ in the rest of $\Omega$.  We study the asymptotic behavior of the eigenvalues $\lambda(\varepsilon)$ and the eigenfunctions $u_{\varepsilon}$ as $\varepsilon$ tends to zero. We obtain explicit formulas for the first and second terms of the corresponding asymptotic expansions by exploiting the solutions of certain auxiliary boundary value problems.
\vspace{11pt}

\noindent
{\bf Keywords:}  Steklov boundary conditions, eigenvalues, mass concentration, asymptotic analysis, spectral analysis.
\vspace{6pt}

\noindent
{\bf 2010 Mathematics Subject Classification:} Primary 35B25; Secondary 35C20, 35P05, 70Jxx, 74K15.

\newcommand{\oH}{{\mathaccent'27 H}}

%%%%%%%%%%%%%%%%%%%%%%%%%%%%%%%%%%%%%%%%%%%%%%%%%%%%%%%%%%%%%%%%%%%%%%%%%%%%%%%%%%%%%%%%%%%%%%%%%%%%%%%%%%%%%%%%%%%%%%%%%%%%%
%%%%%%%%%%%%%%%%%%%%%%%%%%%%%%%%%%%%%%%%%%%%%%%%%%%%%%%%%%%%%%%%%%%%%%%%%%%%%%%%%%%%%%%%%%%%%%%%%%%%%%%%%%%%%%%%%%%%%%%%%%%%%
%%%%%%%%%%%%%%%%%%%%%%%%%%%%%%%%%%%%%%%%% INTRODUCTION %%%%%%%%%%%%%%%%%%%%%%%%%%%%%%%%%%%%%%%%%%%%%%%%%%%%%%%%%%%%%%%
%%%%%%%%%%%%%%%%%%%%%%%%%%%%%%%%%%%%%%%%%%%%%%%%%%%%%%%%%%%%%%%%%%%%%%%%%%%%%%%%%%%%%%%%%%%%%%%%%%%%%%%%%%%%%%%%%%%%%%%%%%%%%
%%%%%%%%%%%%%%%%%%%%%%%%%%%%%%%%%%%%%%%%%%%%%%%%%%%%%%%%%%%%%%%%%%%%%%%%%%%%%%%%%%%%%%%%%%%%%%%%%%%%%%%%%%%%%%%%%%%%%%%%%%%%%

\section{Introduction}

We fix once for all a real number $M>0$  and a bounded connected open set $\Omega$ in $\mathbb R^2$ of class $C^{3}$.
Then, for $\varepsilon>0$ small, we consider the Neumann eigenvalue problem
\begin{equation}\label{Neumann}
\left\{\begin{array}{ll}
-\Delta u_{\varepsilon}=\lambda(\varepsilon)\rho_{\varepsilon}u_{\varepsilon} & {\rm in}\ \Omega,\\
\frac{\partial u_{\varepsilon}}{\partial\nu}=0 & {\rm on}\ \partial\Omega,
\end{array}\right.
\end{equation}
in the unknowns $\lambda(\varepsilon)$ (the eigenvalue) and $u_{\varepsilon}$ (the eigenfunction). The factor $\rho_\varepsilon$ is defined by
\begin{equation*}
\rho_{\varepsilon}(x):=\left\{\begin{array}{ll}
\varepsilon & {\rm in}\ \Omega\setminus\overline\omega_{\varepsilon},\\
\frac{M-\varepsilon |\Omega\setminus\overline\omega_{\varepsilon}|}{|\omega_{\varepsilon}|} & {\rm in}\ \omega_{\varepsilon},
\end{array}\right.
\end{equation*}
where
\begin{equation*}
\omega_{\varepsilon}:=\left\{x\in\Omega:{\rm dist}\left(x,\partial\Omega\right)<\varepsilon\right\}
\end{equation*}
is the strip of width $\varepsilon$ near the boundary $\partial\Omega$ of $\Omega$ (see Figure \ref{strip}). Here and in the sequel $\nu$ denotes the outer unit normal to $\partial\Omega$. 

\begin{figure}
    \centering
       \includegraphics[width=0.4\textwidth]{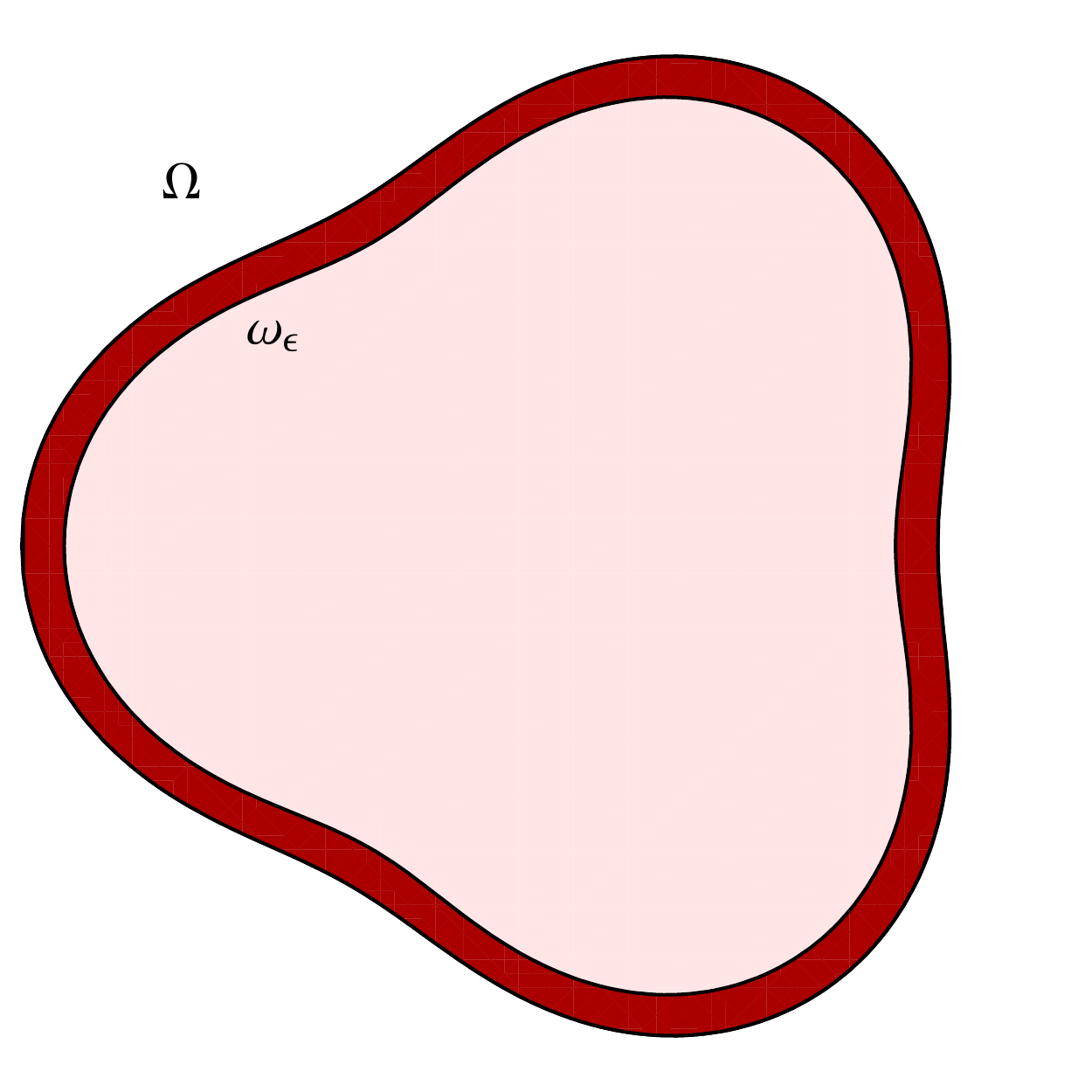}
			\caption{}
			\label{strip}
\end{figure}

 It is well-known that the eigenvalues of \eqref{Neumann} have finite multiplicity and form an increasing sequence
$$
\lambda_0({\varepsilon})<\lambda_1(\varepsilon)\leq\lambda_2(\varepsilon)\leq\cdots\leq\lambda_j(\varepsilon)\leq\cdots\nearrow +\infty.
$$
In addition $\lambda_0(\varepsilon)=0$ and the eigenfunctions corresponding to $\lambda_0(\varepsilon)$ are the constant functions on $\Omega$. We will agree to repeat the eigenvalues according to their multiplicity.

Problem \eqref{Neumann} arises in the study of the transverse vibrations of a thin elastic membrane which occupies at rest the planar domain $\Omega$ (see e.g., \cite{cohil}). The mass of the membrane is distributed accordingly to the density $\rho_{\varepsilon}$. Thus the total mass is given by 
\begin{equation*}
\int_{\Omega}\rho_{\varepsilon}dx=M
\end{equation*}
and it is constant for all $\varepsilon>0$. In particular, most of the mass is concentrated in a $\varepsilon$-neighborhood of the boundary $\partial\Omega$, while the remaining is distributed in the rest of $\Omega$ with a density proportional to $\varepsilon$. The eigenvalues $\lambda_j(\varepsilon)$ are the squares of the natural frequencies of vibration when the boundary of the membrane is left free. The corresponding eigenfunctions represent the profiles of vibration.

Then, we introduce the classical Steklov eigenvalue problem
\begin{equation}\label{Steklov}
\left\{\begin{array}{ll}
\Delta u=0, & {\rm in}\ \Omega,\\
\frac{\partial u}{\partial\nu}=\frac{M}{|\partial\Omega|}\mu u, & {\rm on}\ \partial\Omega,
\end{array}\right.
\end{equation}
in the unknowns $\mu$ (the eigenvalue) and $u$ (the eigenfunction). The spectrum of \eqref{Steklov} consists of an increasing sequence of non-negative eigenvalues of finite multiplicity, which we denote by
$$
\mu_0<\mu_1\leq\mu_{2}\leq\cdots\leq\mu_{j}\leq\cdots\nearrow+\infty.
$$
One easily verifies that $\mu_{0}=0$ and that the corresponding eigenfunctions are the constants functions on $\Omega$. In addition, one can prove that 
for all $j\in\mathbb N$ we have
$$
\lambda_j(\varepsilon)\rightarrow\mu_{j}\quad\text{ as }\varepsilon\rightarrow 0
$$ 
(see, {e.g.}, Arrieta {\it et al.}~\cite{arrieta}, see also Buoso and Provenzano \cite{buosoprovenzano} and Theorem \ref{convergence} here below). Accordingly, one may think to the $\mu_{j}$'s as to the squares of the natural frequencies of vibration of a free elastic membrane with total mass $M$ concentrated on the $1$-dimensional boundary $\partial\Omega$ with constant density $M/{|\partial\Omega|}$. A classical reference for the study of problem \eqref{Steklov} is the paper \cite{steklov} by Steklov. We refer to Girouard and Polterovich \cite{girouardpolterovich} for a recent survey paper and to the recent works of Lamberti and Provenzano \cite{lambertiprovenzano1} and of Lamberti \cite{lamberti1} for related problems. We also refer to  Buoso and Provenzano \cite{buosoprovenzano} for a detailed analysis of the analogous problem for the biharmonic operator.  

The aim of the present paper is to study the asymptotic behavior of the eigenvalues $\lambda_j(\varepsilon)$  of problem \eqref{Neumann} and the corresponding eigenfunctions $u_{j,\varepsilon}$ as $\varepsilon$ goes to zero, i.e., when the thin strip $\omega_{\varepsilon}$ shrinks to the boundary of $\Omega$. To do so, we show the validity of an asymptotic expansion for $\lambda_j(\varepsilon)$ and $u_{j,\varepsilon}$ as $\varepsilon$ goes to zero. In addition, we provide explicit expressions for the first two coefficients in the expansions in terms of solutions of suitable auxiliary problems. In particular, we establish a closed formula for the derivative of $\lambda_j(\varepsilon)$ at $\varepsilon=0$. We observe that such a derivative may be seen as the {\em topological derivative} of $\lambda_j$ for the domain perturbation considered in this paper. We will confine our-selfs to the case when $\lambda_j(\varepsilon)$ converges to a simple eigenvalue $\mu_j$ of \eqref{Steklov}. We observe that such a restriction is justified by the fact that Steklov eigenvalues are generically simple (see e.g., Albert \cite{albert} and Uhlenbeck \cite{uhl}).

As we have written here above, problems \eqref{Neumann} and \eqref{Steklov} concern the elastic behavior of a membrane with mass distributed in a very thin region near the boundary. In view of such a physical interpretation, one may wish to know whether the normal modes of vibration are decreasing or increasing when $\varepsilon>0$ approaches $0$. 

To answer to this question one can compute the value of the derivative of  $\lambda_j(\varepsilon)$ at  $\varepsilon=0$ by exploiting the closed formula that we will obtain. When $\Omega$ is a ball, we can find explicit expressions for the eigenvalues $\lambda_j(\varepsilon)$ and for the corresponding eigenvectors (in this case every eigenvalue is double). In Appendix B we have verified that in such special case the eigenvalues are locally decreasing when $\varepsilon$ approaches $0$ from above. Accordingly the Steklov eigenvalues of the ball are local minimizers of the $\lambda_j(\varepsilon)$. This result is in agreement with the value of the derivative of  $\lambda_j(\varepsilon)$ at  $\varepsilon=0$  that one may compute from our closed formula obtained for a general domain $\Omega$ of class $C^3$.

We observe here that asymptotics for vibrating systems (membranes or bodies) containing masses along curves or masses concentrated at certain points have been considered by several authors in the last decades (see, {\it e.g.},  Golovaty {\it et al.}~\cite{gol1},  Lobo and P\'erez \cite{lope1} and Tchatat \cite{tchatatbook}). We also refer to  Lobo and P{\'e}rez \cite{loboperez1,loboperez2} where the authors consider the vibration of membranes and bodies carrying concentrated masses near the boundary, and to Golovaty {\it et al.}~\cite{gomezpereznazarov2,gomezpereznazarov1}, where the authors consider spectral stiff problems in domains surrounded by thin bands. Let us recall that these problems have been addressed also for vibrating plates (see Golovaty {\it et al.}~\cite{golonape_plates1,golonape_plates2} and the references therein). We also mention the alternative approach based on potential theory and functional analysis proposed in Musolino and Dalla Riva \cite{musolinodallariva} and Lanza de Cristoforis \cite{lanza}.

The paper is organized as follows. In Section 2 we introduce the notation and certain preliminary tools that are used through the paper. In Section \ref{sec:3} we state our main Theorems \ref{asymptotic_eigenvalues} and \ref{asymptotic_eigenfunctions}, which concern the asymptotic expansions of the eigenvalues and of the eigenfunctions of \eqref{Neumann}, respectively. In Theorem \ref{asymptotic_eigenvalues} we also provide the explicit formula for the topological derivative of the eigenvalues of \eqref{Neumann}. The proof of Theorems \ref{asymptotic_eigenvalues} and \ref{asymptotic_eigenfunctions} is presented in Sections \ref{sec:4} and \ref{sec:5}. In Section \ref{sec:4} we justify the asymptotic expansions of Theorems \ref{asymptotic_eigenvalues} and \ref{asymptotic_eigenfunctions} up to the zero order terms. Then in Section \ref{sec:5} we justify the asymptotic expansions up to the first order terms {and, as a byproduct, we prove the validity of the formula for the topological derivative.} At the end of the paper we have included two Appendices. In the Appendix A we consider an auxiliary problem and prove its well-posedness. In the last Appendix B we consider the case when $\Omega$ is the unit ball and prove that the Steklov eigenvalues are local minimizers of the Neumann eigenvalues for $\varepsilon$ small enough.

%%%%%%%%%%%%%%%%%%%%%%%%%%%%%%%%%%%%%%%%%%%%%%%%%%%%%%%%%%%%%%%%%%%%%%%%%%%%%%%%%%%%%%%%%%%%%%%%%%%%%%%%%%%%%%%%%%%%%%%%%%%%%
%%%%%%%%%%%%%%%%%%%%%%%%%%%%%%%%%%%%%%%%%%%%%%%%%%%%%%%%%%%%%%%%%%%%%%%%%%%%%%%%%%%%%%%%%%%%%%%%%%%%%%%%%%%%%%%%%%%%%%%%%%%%%
%%%%%%%%%%%%%%%%%%%%%%%%%%%%%%%%%%%%%%%%% PRELIMINARIES %%%%%%%%%%%%%%%%%%%%%%%%%%%%%%%%%%%%%%%%%%%%%%%%%%%%%%%%%%%%%%%%%%%%%
%%%%%%%%%%%%%%%%%%%%%%%%%%%%%%%%%%%%%%%%%%%%%%%%%%%%%%%%%%%%%%%%%%%%%%%%%%%%%%%%%%%%%%%%%%%%%%%%%%%%%%%%%%%%%%%%%%%%%%%%%%%%%
%%%%%%%%%%%%%%%%%%%%%%%%%%%%%%%%%%%%%%%%%%%%%%%%%%%%%%%%%%%%%%%%%%%%%%%%%%%%%%%%%%%%%%%%%%%%%%%%%%%%%%%%%%%%%%%%%%%%%%%%%%%%%

\section{Preliminaries}\label{sec:2}

\subsection{A convenient change of variables}

Since $\Omega$ is of class $C^{3}$, it is well-known that there exists ${\varepsilon'_\Omega}>0$ such that the map $x\mapsto x-\varepsilon\nu(x)$ is a diffeomorphism {of class $C^2$} from $\partial\Omega$ to $\partial\omega_{\varepsilon}\cap\Omega$ for all $\varepsilon\in(0,{\varepsilon'_\Omega})$. We will exploit this fact to introduce curvilinear coordinates in the strip $\omega_{\varepsilon}$. To do so, we denote by $\gamma:[0,|\partial\Omega|)\rightarrow\partial\Omega$ the arc length parametrization of the boundary $\partial\Omega$. Then, one verifies that the map $\psi:[0,|\partial\Omega|)\times(0,\varepsilon)\rightarrow \omega_{\varepsilon}$ defined by $\psi(s,t):=\gamma(s)-t\nu(\gamma(s))$, for all $(s,t)\in [0,|\partial\Omega|)\times(0,\varepsilon)$, is a diffeomorphism and we can use the curvilinear coordinates $(s,t)$ in the strip $\omega_\varepsilon$.
We denote by $\kappa(s)$ the signed curvature of $\partial\Omega$, namely we set $\kappa(s)=\gamma_1'(s)\gamma_2''(s)-\gamma_2'(s)\gamma_1''(s)$ for all $s\in [0,|\partial\Omega|)$. 

In order to study problem \eqref{Neumann} it is also convenient to introduce a change of variables by setting $\xi=t/{\varepsilon}$. Accordingly, we denote by $\psi_{\varepsilon}$ the function from $[0,|\partial\Omega|)\times(0,1)$ to $\omega_{\varepsilon}$ defined by $\psi_{\varepsilon}(s,\xi):=\gamma(s)-\varepsilon\xi\nu(\gamma(s))$ for all $(s,\xi)\in[0,|\partial\Omega|)\times(0,1)$. The variable $\xi$ is usually called `rapid variable'. We observe that in this new system of coordinates $(s,\xi)$, the strip $\omega_{\varepsilon}$ is transformed into a band of length $|\partial\Omega|$ and width $1$ (see Figures \ref{fig43} and \ref{fig44}). Moreover, we note that if $\varepsilon<({\sup_{s\in[0,|\partial\Omega|)}|\kappa(s)|})^{-1}$, then we have 
\begin{equation}\label{positiveinf}
\inf_{(s,\xi)\in[0,|\partial\Omega|)\times(0,1)}\bigl(1-\varepsilon\xi\kappa(s)\bigr)>0\,,
\end{equation} 
so that $|\det D\psi_{\varepsilon}|=\varepsilon(1-\varepsilon\xi\kappa(s))$ for all $(s,\xi)\in[0,|\partial\Omega|)\times(0,1)$.

We will also need to write the gradient of a function $u$ on $\omega_\varepsilon$ with respect to the coordinates $(s,\xi)$. To do so we take 
\[
\varepsilon''_\Omega:=\min\left\{\varepsilon'_\Omega\,,\, \biggl({\sup_{s\in[0,|\partial\Omega|)}|\kappa(s)|}\biggr)^{-1}\right\}.
\]
and we consider $\varepsilon\in(0,\varepsilon''_\Omega)$. Then we have
\begin{equation*}
\left(\nabla u\circ\psi_{\varepsilon}\right)(s,\xi)=\left(\begin{array}{ll}{\frac{\gamma_1'(s)}{1-\varepsilon\xi\kappa(s)}\partial_s(u\circ\psi_{\varepsilon}(s,\xi))-\frac{\gamma_2'(s)+\varepsilon\xi\gamma_1''(s)}{\varepsilon(1-\varepsilon\xi\kappa(s))}\partial_{\xi}(u\circ\psi_{\varepsilon}(s,\xi))}\\{\frac{\gamma_2'(s)}{1-\varepsilon\xi\kappa(s)}\partial_s(u\circ\psi_{\varepsilon}(s,\xi))+\frac{\gamma_1'(s)-\varepsilon\xi\gamma_2''(s)}{\varepsilon(1-\varepsilon\xi\kappa(s))}\partial_{\xi}(u\circ\psi_{\varepsilon}(s,\xi){)}}\end{array}\right)
\end{equation*}
and therefore

\begin{equation}\label{grad2}
\begin{split}
&\left(\nabla u\circ\psi_{\varepsilon}\cdot\nabla v\circ\psi_{\varepsilon}\right)(s,\xi)\\
&\qquad=\frac{1}{\varepsilon^2}\partial_{\xi}(u\circ\psi_{\varepsilon}(s,\xi))\partial_{\xi}(v\circ\psi_{\varepsilon}(s,\xi))+\frac{\partial_s (u\circ\psi_{\varepsilon}(s,\xi))\partial_s (v\circ\psi_{\varepsilon}(s,\xi))}{(1-\varepsilon\xi\kappa(s))^2}
\end{split}
\end{equation}
for all $(s,\xi)\in[0,|\partial\Omega|)\times(0,1)$.

\begin{figure}[!ht]
 \centering
    \includegraphics[width=0.6\textwidth]{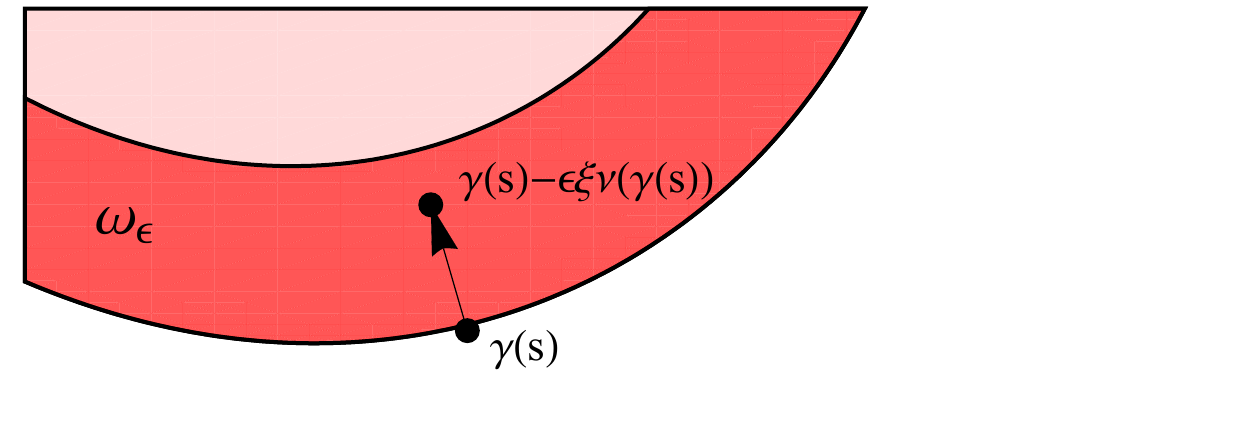}
		 \caption{}\label{fig43}
\end{figure}
\begin{figure}[!ht]
  \centering
      \includegraphics[width=0.6\textwidth]{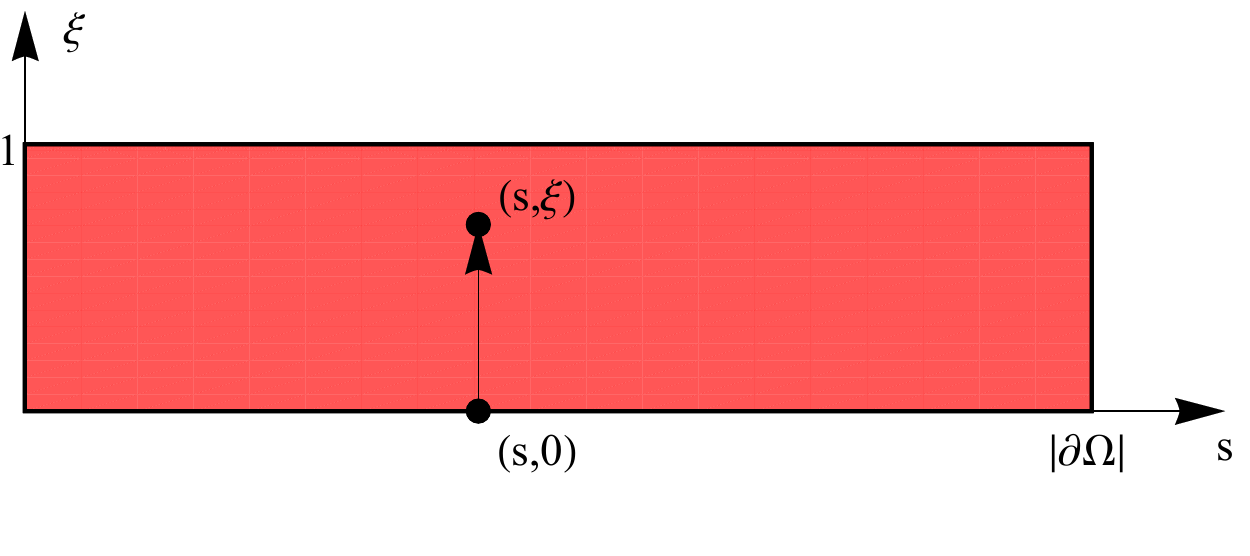}
\caption{}\label{fig44}
\end{figure}

\subsection{Some remarks about $\rho_\varepsilon$}

We can write $\rho_{\varepsilon}=\varepsilon+\frac{1}{\varepsilon}\tilde\rho_{\varepsilon}\chi_{\omega_{\varepsilon}}$, where $\chi_{\omega_{\varepsilon}}$ is the characteristic function of $\omega_{\varepsilon}$ and 
\begin{equation}\label{def_tilde_rho}
\tilde\rho_{\varepsilon}:=\varepsilon\left(\frac{M-\varepsilon |\Omega\setminus\overline\omega_{\varepsilon}|}{|\omega_{\varepsilon}|}\right)-\varepsilon^2.
\end{equation}
Then we observe that  for $\varepsilon\in(0,\varepsilon''_\Omega)$ we have
\begin{equation}\label{oe}
|\omega_{\varepsilon}|=\varepsilon |\partial\Omega|-\frac{\varepsilon^2}{2}K\,,
\end{equation}
where $K$ is defined by
\begin{equation}\label{K}
K:=\int_0^{|\partial\Omega|}\kappa(s)ds.
\end{equation}
By \eqref{positiveinf} it follows that $|\partial\Omega|-\frac{\varepsilon}{2}K>0$. Then by \eqref{def_tilde_rho} and \eqref{oe} one verifies that there exists a real analytic map $\tilde{R}$ from $(-\varepsilon''_\Omega,\varepsilon''_\Omega)$ to $\mathbb{R}$ such that 
\begin{equation}\label{asymptotic_rho}
\tilde\rho_{\varepsilon}=\frac{M}{|\partial\Omega|}+\frac{\frac{1}{2}K M-|\Omega||\partial\Omega|}{|\partial\Omega|^2}\varepsilon+\varepsilon^2\tilde{R}(\varepsilon)\qquad\forall\varepsilon\in(0,\varepsilon''_\Omega)\,.
\end{equation}
We are now legitimate to fix once for all a real number 
\begin{equation}\label{eO}
\text{$\varepsilon_\Omega\in (0,\varepsilon''_\Omega)$ such that $\inf_{\varepsilon\in(0,\varepsilon_\Omega)}\tilde\rho_{\varepsilon}>0.$}
\end{equation}

\subsection{Weak formulation of problem (\ref{Neumann}) and the resolvent operator $\mathcal A_\varepsilon$}

For all $\varepsilon\in(0,\varepsilon_\Omega)$, we denote by $\mathcal H_{\varepsilon}(\Omega)$ the Hilbert space consisting of the functions in the standard Sobolev space $H^1(\Omega)$ endowed with the bilinear form
\begin{equation}\label{bilinear_eps}
\left\langle u,v\right\rangle_{\varepsilon}:=\int_{\Omega}\nabla u\cdot\nabla v dx+\int_{\Omega}\rho_{\varepsilon}u v dx\ \ \forall u,v\in \mathcal H_{\varepsilon}(\Omega).
\end{equation}
The bilinear form \eqref{bilinear_eps} induces on $H^1(\Omega)$ a norm which is equivalent to the standard one. We denote such a norm by $\|\cdot\|_{\varepsilon}$.

We note that the  weak formulation of problem \eqref{Neumann} can be stated as follows: a pair $(\lambda(\varepsilon), u_{\varepsilon})\in \mathbb R\times H^1(\Omega)$ is a solution of \eqref{Neumann} in the weak sense if and only if 
\begin{equation*}
\int_{\Omega}\nabla u_{\varepsilon}\cdot\nabla\varphi dx=\lambda(\varepsilon)\int_{\Omega}\rho_{\varepsilon} u_{\varepsilon}\varphi dx\ \ \forall \varphi\in H^1(\Omega).
\end{equation*}
Then, for all $\varepsilon\in(0,\varepsilon_\Omega)$ we introduce the linear operator $\mathcal A_{\varepsilon}$ from $\mathcal H_{\varepsilon}(\Omega)$ to itself which maps a function $f\in\mathcal H_{\varepsilon}(\Omega)$ to the function $u\in\mathcal H_{\varepsilon}$ such that
\begin{equation}\label{A_eps}
\int_{\Omega}\nabla u\cdot\nabla\varphi dx+\int_{\Omega}\rho_{\varepsilon} u\varphi dx=\int_{\Omega}\rho_{\varepsilon}f\varphi dx\ \ \forall\varphi\in\mathcal H_{\varepsilon}(\Omega).
\end{equation}
{We note that such a function $u\in\mathcal H_{\varepsilon}(\Omega)$ exists by the Riesz representation theorem and it is unique because $\int_{\Omega}\nabla u\cdot\nabla\varphi dx+\int_{\Omega}\rho_{\varepsilon} u\varphi dx=0$ for all $\varphi\in\mathcal H_{\varepsilon}(\Omega)$ implies that $\norm{u}_\varepsilon=0$. }

\medskip

In the sequel we {will heavily} exploit the following lemma. We refer to Ole{\u\i}nik {\it et al.}~\cite[III.1]{oleinik} for its proof.
\begin{lemma}\label{lemma_fondamentale}
Let {$A$ be a   compact, self-adjoint and positive linear operator} from a separable Hilbert space $H$ to itself. Let $u\in H$, with $\|u\|_H=1$. Let $\eta,r>0$ be such that $\|A u-\eta u\|_H\leq r$. Then, there exists an eigenvalue $\eta^*$ of the operator $A$ which {satisfies} the inequality $|\eta-\eta^*|\leq r$. Moreover, for any $r^*>r$ there exists $u^*\in H$ with $\|u^*\|_H=1$, $u^*$ belonging to the space generated by all the eigenfunctions associated with an eigenvalue of the operator $A$ lying on the segment $[\eta-r^*,\eta+r^*]$, and such that 
$$
\|u-u^*\|_H\leq\frac{2 r}{r^*}.
$$
\end{lemma}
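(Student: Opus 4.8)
The statement to be proved is Lemma~\ref{lemma_fondamentale}, which is a classical quantitative spectral perturbation result (it appears in Ole\u\i nik, Shamaev and Yosifian's book). The plan is to prove it directly from the spectral theorem for compact self-adjoint operators, without invoking any of the PDE machinery of the paper.

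First I would invoke the spectral theorem: since $A$ is compact, self-adjoint and positive on the separable Hilbert space $H$, there is an orthonormal basis $\{e_k\}_{k}$ of $H$ (or of $\overline{\mathrm{ran}\,A}$, completed by a basis of $\ker A$) consisting of eigenvectors, $A e_k = \eta_k e_k$ with $\eta_k\ge 0$ and $\eta_k\to 0$. Write $u=\sum_k c_k e_k$ with $\sum_k c_k^2 = \|u\|_H^2 = 1$. Then
\[
\|Au-\eta u\|_H^2 = \sum_k (\eta_k-\eta)^2 c_k^2 \le r^2 .
\]
For the first assertion, suppose for contradiction that $|\eta_k-\eta|>r$ for every $k$ (i.e.\ no eigenvalue lies in $[\eta-r,\eta+r]$). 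Then $(\eta_k-\eta)^2 > r^2$ for all $k$, hence
\[
\sum_k (\eta_k-\eta)^2 c_k^2 > r^2 \sum_k c_k^2 = r^2 ,
\]
contradicting the displayed bound. (One has to be a little careful that the infimum of $|\eta_k-\eta|$ over $k$ is actually attained or at least that strict inequality for all $k$ still forces the sum to exceed $r^2$; since $\eta_k\to 0$, the values $|\eta_k-\eta|$ accumulate only at $|\eta|$, so $\inf_k|\eta_k-\eta|$ is attained unless $\eta=0$, and the case $\eta=0$ is handled by positivity of $A$ together with $\eta_0\ge 0$ — or more simply, strict inequality $|\eta_k-\eta|>r$ for all $k$ combined with $\sum c_k^2=1$ already yields $\sum(\eta_k-\eta)^2c_k^2\ge (\inf_k(\eta_k-\eta)^2)\cdot 1$, and this inf, being a limit point question, is still $>r^2$ when all terms are; the only subtlety is $\inf>r^2$ versus $\ge$, which I would resolve by noting that if the inf equals $r^2$ exactly it is attained by some $\eta_{k_0}$, giving $|\eta_{k_0}-\eta|=r$, already an eigenvalue in the closed segment). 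So there exists an eigenvalue $\eta^*$ with $|\eta-\eta^*|\le r$.

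For the second assertion, fix $r^*>r$ and let $P$ be the orthogonal projection of $H$ onto the closed span of all eigenvectors $e_k$ with $\eta_k\in[\eta-r^*,\eta+r^*]$; set $u^\flat := Pu = \sum_{k:\,|\eta_k-\eta|\le r^*} c_k e_k$. Then
\[
\|u-u^\flat\|_H^2 = \sum_{k:\,|\eta_k-\eta|>r^*} c_k^2
\le \frac{1}{(r^*)^2}\sum_{k:\,|\eta_k-\eta|>r^*}(\eta_k-\eta)^2 c_k^2
\le \frac{\|Au-\eta u\|_H^2}{(r^*)^2}\le \frac{r^2}{(r^*)^2},
\]
so $\|u-u^\flat\|_H\le r/r^*<1$; in particular $u^\flat\ne 0$. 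Now set $u^*:=u^\flat/\|u^\flat\|_H$, which lies in the prescribed eigenspace and has unit norm. It remains to estimate $\|u-u^*\|_H$. By the triangle inequality,
\[
\|u-u^*\|_H \le \|u-u^\flat\|_H + \|u^\flat-u^*\|_H
= \|u-u^\flat\|_H + \bigl|\,\|u^\flat\|_H-1\,\bigr| ,
\]
using that $u^*$ and $u^\flat$ are positive multiples of the same vector. Since $\bigl|\|u^\flat\|_H-\|u\|_H\bigr|\le\|u^\flat-u\|_H$ and $\|u\|_H=1$, we get $\bigl|\|u^\flat\|_H-1\bigr|\le\|u-u^\flat\|_H$, hence
\[
\|u-u^*\|_H \le 2\|u-u^\flat\|_H \le \frac{2r}{r^*},
\]
which is exactly the claimed bound.

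I do not expect a genuine obstacle here: the argument is elementary once the spectral theorem is in hand. The only point requiring a little care is the borderline case in the first assertion (whether one lands in the open or closed $r$-neighbourhood), which is resolved by observing that $\eta_k\to 0$ forces $\inf_k|\eta_k-\eta|$ to be attained whenever $\eta\ne 0$, while the case $\eta=0$ is compatible with the statement because $A$ being positive and having $0$ in its spectrum (in infinite dimensions, or with $\eta_k\ge0$) already supplies the eigenvalue $\eta^*=0$ with $|\eta-\eta^*|=0\le r$. Since the paper only cites Ole\u\i nik {\it et al.}~for the proof, one may alternatively simply refer the reader there; but the self-contained argument above is short enough to include.
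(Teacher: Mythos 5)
Your proof is correct and complete. Note that the paper itself does not prove this lemma at all: it simply cites Ole\u\i nik, Shamaev and Yosifian \cite[III.1]{oleinik}, and your argument is essentially the standard spectral-theorem proof given in that reference, so there is no genuine divergence of method --- you have merely made explicit what the paper delegates. Two small remarks. First, your parenthetical agonizing over whether $\inf_k|\eta_k-\eta|$ is attained is unnecessary and in places slightly off (if all terms are strictly greater than $r^2$ and the infimum equals $r^2$, that infimum is by definition \emph{not} attained); the clean way to get the strict inequality is termwise: if $|\eta_k-\eta|>r$ for every $k$ and $c_{k_0}\neq 0$ for some $k_0$, then $\sum_k(\eta_k-\eta)^2c_k^2\geq r^2\sum_{k\neq k_0}c_k^2+(\eta_{k_0}-\eta)^2c_{k_0}^2>r^2\sum_k c_k^2=r^2$, contradicting $\|Au-\eta u\|_H^2\leq r^2$, with no reference to infima or accumulation points. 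Second, the only structural point worth stating explicitly is that every index $k$ in your expansion corresponds to a genuine eigenvalue of $A$ (either $\eta_k>0$, or $\eta_k=0$ when $e_k$ is taken from a basis of $\ker A$, in which case $0$ really is an eigenvalue), so the contradiction hypothesis ``no eigenvalue within $r$ of $\eta$'' does apply to every $\eta_k$; you address this in passing and it is fine. The second half of your argument --- projecting onto the span of the eigenvectors with $\eta_k\in[\eta-r^*,\eta+r^*]$, bounding $\|u-Pu\|_H\leq r/r^*$ by Chebyshev-type weighting, and doubling the error upon normalization via $\bigl|\,\|Pu\|_H-1\,\bigr|\leq\|u-Pu\|_H$ --- is exactly right and yields the stated bound $2r/r^*$.
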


We observe that the operator $\mathcal{A}_\varepsilon$ is a good candidate for the application of Lemma \ref{lemma_fondamentale}. Indeed, we have the following Proposition \ref{Ae}.

\begin{proposition}\label{Ae}
For all $\varepsilon\in(0,\varepsilon_\Omega)$ the map $\mathcal{A}_\varepsilon$ is a compact, self-adjoint and positive linear operator from $\mathcal H_{\varepsilon}$ to itself.
\end{proposition}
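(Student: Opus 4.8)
The plan is to verify directly from the defining relation \eqref{A_eps} the four required properties — linearity, self-adjointness, positivity, and compactness — using the symmetry of the bilinear form $\langle\cdot,\cdot\rangle_{\varepsilon}$ of \eqref{bilinear_eps}, the sign of $\rho_\varepsilon$, and the compactness of the embedding of $H^1(\Omega)$ into $L^2(\Omega)$. Linearity is immediate from the uniqueness part of the definition of $\mathcal{A}_\varepsilon$: if $u_i=\mathcal{A}_\varepsilon f_i$ for $i=1,2$, then $\alpha u_1+\beta u_2$ solves \eqref{A_eps} with $f$ replaced by $\alpha f_1+\beta f_2$, hence $\mathcal{A}_\varepsilon(\alpha f_1+\beta f_2)=\alpha\mathcal{A}_\varepsilon f_1+\beta\mathcal{A}_\varepsilon f_2$. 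For self-adjointness I would test \eqref{A_eps} written for $f$ with $\varphi=g$, and the relation written for $g$ with $\varphi=f$, obtaining $\langle\mathcal{A}_\varepsilon f,g\rangle_\varepsilon=\int_\Omega\rho_\varepsilon f g\,dx$ and $\langle\mathcal{A}_\varepsilon g,f\rangle_\varepsilon=\int_\Omega\rho_\varepsilon g f\,dx$; since $\rho_\varepsilon$ is real-valued and $\langle\cdot,\cdot\rangle_\varepsilon$ is symmetric, these two right-hand sides coincide and $\langle\cdot,\cdot\rangle_\varepsilon$ may be swapped, giving $\langle\mathcal{A}_\varepsilon f,g\rangle_\varepsilon=\langle f,\mathcal{A}_\varepsilon g\rangle_\varepsilon$ for all $f,g\in\mathcal{H}_\varepsilon(\Omega)$.

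For positivity I would test \eqref{A_eps} with $\varphi=f$, which yields
\[
\langle\mathcal{A}_\varepsilon f,f\rangle_\varepsilon=\int_\Omega\rho_\varepsilon f^2\,dx\,.
\]
This is non-negative because $\rho_\varepsilon>0$ a.e.\ in $\Omega$ for $\varepsilon\in(0,\varepsilon_\Omega)$: indeed $\rho_\varepsilon=\varepsilon+\tfrac1\varepsilon\tilde\rho_\varepsilon\chi_{\omega_\varepsilon}$ with $\tilde\rho_\varepsilon>0$ by the choice of $\varepsilon_\Omega$ in \eqref{eO}. Moreover the displayed quantity vanishes only if $f=0$, so $\mathcal{A}_\varepsilon$ is in fact strictly positive.

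For compactness I would show that $\mathcal{A}_\varepsilon$ maps bounded sequences to precompact ones. Let $(f_n)_n$ be bounded in $\mathcal{H}_\varepsilon(\Omega)$; since $\|\cdot\|_\varepsilon$ is equivalent to the standard norm of $H^1(\Omega)$, the sequence is bounded in $H^1(\Omega)$, and by the Rellich--Kondrachov theorem (applicable since $\Omega$ is bounded and of class $C^3$) there is a subsequence $(f_{n_k})_k$ converging in $L^2(\Omega)$. Writing $u_k:=\mathcal{A}_\varepsilon f_{n_k}$ and testing the relation \eqref{A_eps} for $f_{n_j}-f_{n_k}$ with $\varphi=u_j-u_k$ gives
\[
\|u_j-u_k\|_\varepsilon^2=\int_\Omega\rho_\varepsilon(f_{n_j}-f_{n_k})(u_j-u_k)\,dx\leq\|\rho_\varepsilon\|_{L^\infty(\Omega)}\,\|f_{n_j}-f_{n_k}\|_{L^2(\Omega)}\,\|u_j-u_k\|_{L^2(\Omega)}\,,
\]
where $\rho_\varepsilon\in L^\infty(\Omega)$ since it is piecewise constant for fixed $\varepsilon$. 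Bounding $\|u_j-u_k\|_{L^2(\Omega)}\leq C\|u_j-u_k\|_\varepsilon$ and dividing, one obtains $\|u_j-u_k\|_\varepsilon\leq C'\|f_{n_j}-f_{n_k}\|_{L^2(\Omega)}\to0$, so $(u_k)_k$ is Cauchy in $\mathcal{H}_\varepsilon(\Omega)$ and converges; the same test with $\varphi=\mathcal{A}_\varepsilon f$ gives the boundedness estimate $\|\mathcal{A}_\varepsilon f\|_\varepsilon\leq C\|f\|_\varepsilon$.

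I do not expect a genuine obstacle here: the argument is the standard recipe for a weighted resolvent operator, and for a \emph{fixed} $\varepsilon$ no uniform-in-$\varepsilon$ control is needed. The only point that genuinely uses the preliminary work of Section~\ref{sec:2} is the positivity of $\rho_\varepsilon$ (hence of $\mathcal{A}_\varepsilon$), which relies on the choice of $\varepsilon_\Omega$ in \eqref{eO}; compactness rests solely on the compact Sobolev embedding and self-adjointness on the symmetry of $\langle\cdot,\cdot\rangle_\varepsilon$.
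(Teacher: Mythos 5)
Your proposal is correct and follows essentially the same route as the paper: self-adjointness and positivity come from the identity $\langle\mathcal{A}_\varepsilon f,g\rangle_\varepsilon=\int_\Omega\rho_\varepsilon fg\,dx$, and compactness rests on the bound $\|\mathcal{A}_\varepsilon f\|_\varepsilon\le C\|f\|_{L^2(\Omega)}$ combined with the compact embedding of $H^1(\Omega)$ into $L^2(\Omega)$. The only cosmetic difference is that the paper packages the compactness step as a factorization $\mathcal{A}_\varepsilon=\tilde{\mathcal{A}}_\varepsilon\circ\mathcal{E}_\varepsilon$ through $L^2(\Omega)$, whereas you run the equivalent sequential (Cauchy subsequence) argument.
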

\proof The proof that $\mathcal{A}_\varepsilon$ is self-adjoint and positive can be effected by noting that $\langle\mathcal{A}_\varepsilon f,g\rangle_\varepsilon=\int_{\Omega}\rho_{\varepsilon}fg\, dx$ for all $f,g\in\mathcal H_{\varepsilon}(\Omega)$. To prove that $\mathcal{A}_\varepsilon$ is compact we denote by $\tilde{\mathcal{A}}_\varepsilon$ the linear operator from $L^2(\Omega)$ to $\mathcal H_{\varepsilon}(\Omega)$ which takes a function $f\in L^2(\Omega)$ to the unique element $u\in \mathcal H_{\varepsilon}(\Omega)$ which satisfies the condition in \eqref{A_eps}. By the Riesz representation theorem one verifies that $\tilde{\mathcal{A}}_\varepsilon$ is well defined.  In addition, we can prove that  $\tilde{\mathcal{A}}_\varepsilon$ is bounded. Indeed, we have
\[
\|\tilde{\mathcal{A}}_\varepsilon f\|_\varepsilon^2=\langle\tilde{\mathcal{A}}_\varepsilon f,\tilde{\mathcal{A}}_\varepsilon f\rangle_\varepsilon=\int_{\Omega}\rho_{\varepsilon}f\tilde{\mathcal{A}}_\varepsilon f\, dx
\]
and by a computation based on the H\"older inequality one verifies that 
\[
\int_{\Omega}\rho_{\varepsilon}f\tilde{\mathcal{A}}_\varepsilon f\, dx=\int_{\Omega}\rho^{\frac{1}{2}}_{\varepsilon}f\rho^{\frac{1}{2}}_{\varepsilon}\tilde{\mathcal{A}}_\varepsilon f\, dx \le \left(\int_{\Omega}\rho_{\varepsilon}f^2dx\right)^{\frac{1}{2}}\left(\int_{\Omega}\rho_{\varepsilon}(\tilde{\mathcal{A}}_\varepsilon f)^2dx\right)^{\frac{1}{2}}\,,
\] 
which implies that $\|{\tilde{\mathcal{A}}_\varepsilon f}\|_\varepsilon\le (\sup_{x\in\Omega}{\rho_\varepsilon(x)}^{\frac{1}{2}})\norm{f}_{L^2(\Omega)}$ for all $f\in L^2(\Omega)$. Then we denote by $\mathcal{E}_\varepsilon$ the embedding map from $\mathcal{H}_\varepsilon(\Omega)$ to $L^2(\Omega)$. Via the natural isomorphism from  $\mathcal{H}_\varepsilon(\Omega)$ to $H^1(\Omega)$, one deduces that $\mathcal{E}_\varepsilon$ is compact. Since $\mathcal{A}_\varepsilon=\tilde{\mathcal{A}}_\varepsilon\circ\mathcal{E}_\varepsilon$, we conclude that also  $\mathcal{A}_\varepsilon$ is compact.
\qed

\medskip

We conclude this subsection by observing that the $L^2(\Omega)$ norm of a function in $H^1(\Omega)$ is uniformly bounded by its $\|\cdot\|_\varepsilon$ norm for all $\varepsilon\in(0,\varepsilon_\Omega)$. We will prove such a result in Proposition \ref{L2<eps} below by exploiting the following Lemma \ref{poinc_peso}.

\begin{lemma}\label{poinc_peso}
There exists $C_{\Omega}>0$ such that
\[
\left\|u-\frac{1}{M}\int_{\Omega}\rho_{\varepsilon}u dx\right\|_{L^2(\Omega)}\leq C_{\Omega}\|\nabla u\|_{L^2(\Omega)}, 
\]
for all $u\in H^1(\Omega)$ and for all $\varepsilon\in(0,\varepsilon_\Omega)$.
\proof
 We argue by contradiction and we assume that   there exist a sequence $\{\tau_k\}_{k\in\mathbb{N}}\subset(0,\varepsilon_\Omega)$ and a sequence $\{w_k\}_{k\in\mathbb{N}}\subset H^1(\Omega)$ such that
\begin{equation}\label{assurdo}
\left\|w_k-\frac{1}{M}\int_{\Omega}\rho_{\tau_k}w_k dx\right\|_{L^2(\Omega)}> k\|\nabla w_k\|_{L^2(\Omega)}
\end{equation}
for all $k\in\mathbb{N}$. Since  $\{\tau_k\}_{k\in\mathbb{N}}$ is bounded there exist ${\tau}\in [0,\varepsilon_\Omega]$ and a subsequence of $\{\tau_k\}_{k\in\mathbb{N}}$, which we still denote by $\{\tau_k\}_{k\in\mathbb{N}}$, such that   $\tau_k\to{\tau}$ as $k\to\infty$. Then we set
$$
v_k:=\left\|w_k-\frac{1}{M}\int_{\Omega}\rho_{\tau_k}w_k dx\right\|_{L^2(\Omega)}^{-1}\left(w_k-\frac{1}{M}\int_{\Omega}\rho_{\tau_k}w_k dx\right)
$$
for all $k\in\mathbb{N}$.
We verify that $\int_{\Omega}\rho_{\tau_k}v_k dx=0$ and $\|v_k\|_{L^2(\Omega)}=1$ for all $k\in\mathbb N$, and from \eqref{assurdo}, $\|\nabla v_k\|<\frac{1}{k}$. Then $v_k$ is bounded in $H^1(\Omega)$ and we can extract a subsequence, which we still denote by $\lbrace v_k\rbrace_{k\in\mathbb N}$, such that $v_k\rightharpoonup v$ weakly in $H^1(\Omega)$ and $v_k\rightarrow v$ strongly in $L^2(\Omega)$, for some $v\in H^1(\Omega)$. Moreover, since $\|\nabla v_k\|_{L^2(\Omega)}<\frac{1}{k}$ one can verify that $\nabla v=0$ a.e.~in $\Omega${, and thus $v$ is constant on $\Omega$. In addition, 
\begin{equation}\label{poic_peso.eq1}
\|v\|_{L^2(\Omega)}=\lim_{k\to\infty}\|v_k\|_{L^2(\Omega)}=1.
\end{equation}
We now prove that \eqref{poic_peso.eq1} leads to a contradiction. Indeed, we can prove that $v=0$. We consider separately the case when ${\tau}>0$ and the case when ${\tau}=0$. For ${\tau}>0$ we verify that  $\lim_{k\rightarrow\infty}\int_{\Omega}\rho_{\tau_k}v_k dx=\int_{\Omega}\rho_{{\tau}}v dx$. Then $\int_{\Omega}\rho_{{\tau}}v dx=0$, because
 $\int_{\Omega}\rho_{\tau_k}v_k dx=0$.  Since $\rho_{{\tau}}>0$ and $v$ is constant, it follows that $v=0$.  If instead ${\tau}=0$, then, by an argument based on \cite[Lemmas 3.1.22, 3.1.28]{phd} we have $\lim_{k\rightarrow\infty}\int_{\Omega}\rho_{\tau_k}v_k dx=\frac{M}{|\partial\Omega|}\int_{\partial\Omega}v d\sigma$. 
Since $\int_{\Omega}\rho_{\tau_k}v_k dx=0$, it follows that $\int_{\partial\Omega}v d\sigma=0$. Since $v$ is constant on $\Omega$, we deduce that $v=0$.}
\endproof
\end{lemma}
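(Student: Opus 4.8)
The plan is to argue by contradiction and compactness, thereby reducing the claimed uniform inequality to a single limiting identity for the concentrating weights $\rho_\varepsilon$. Suppose the statement fails. Then for every $k\in\mathbb N$ there exist $\tau_k\in(0,\varepsilon_\Omega)$ and $w_k\in H^1(\Omega)$ with
\[
\left\|w_k-\frac{1}{M}\int_{\Omega}\rho_{\tau_k}w_k\,dx\right\|_{L^2(\Omega)}>k\,\|\nabla w_k\|_{L^2(\Omega)}.
\]
Such a $w_k$ is necessarily non-constant, so the left-hand side is strictly positive and I may normalize: set $v_k$ equal to the numerator divided by its own $L^2(\Omega)$ norm. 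Then $\|v_k\|_{L^2(\Omega)}=1$, and since $\int_\Omega\rho_{\tau_k}\,dx=M$ the weighted mean of the numerator vanishes, so that $\int_\Omega\rho_{\tau_k}v_k\,dx=0$; dividing the displayed inequality by the same normalizing quantity gives $\|\nabla v_k\|_{L^2(\Omega)}<1/k$.

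I would then extract limits. The sequence $\{v_k\}$ is bounded in $H^1(\Omega)$, so up to a subsequence $v_k\rightharpoonup v$ weakly in $H^1(\Omega)$ and, by Rellich--Kondrachov, $v_k\to v$ strongly in $L^2(\Omega)$; in particular $\|v\|_{L^2(\Omega)}=1$. Because $\|\nabla v_k\|_{L^2(\Omega)}\to0$ and gradients converge weakly, $\nabla v=0$, so $v$ is constant on $\Omega$. Since $\{\tau_k\}$ is bounded I pass to a further subsequence with $\tau_k\to\tau$ for some $\tau\in[0,\varepsilon_\Omega]$, and the whole argument will be complete once I show $v=0$, which contradicts $\|v\|_{L^2(\Omega)}=1$.

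The crux is therefore to pass to the limit in the constraint $\int_\Omega\rho_{\tau_k}v_k\,dx=0$, and the two cases $\tau>0$ and $\tau=0$ behave very differently. When $\tau>0$ the weights stay bounded: $\rho_{\tau_k}$ is uniformly bounded for $\tau_k$ near $\tau$ and converges almost everywhere to $\rho_\tau$, so $\rho_{\tau_k}\to\rho_\tau$ in $L^2(\Omega)$ by dominated convergence, and combined with the strong $L^2$ convergence of $v_k$ this yields $\int_\Omega\rho_\tau v\,dx=0$; as $v$ is constant and $\rho_\tau>0$ with $\int_\Omega\rho_\tau\,dx=M>0$, it follows that $v=0$. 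The genuinely delicate case, which I expect to be the main obstacle, is $\tau=0$: here $\rho_{\tau_k}$ blows up like $\tau_k^{-1}$ on the shrinking strip $\omega_{\tau_k}$ and no pointwise limit survives. What one must instead establish is the mass-concentration identity $\lim_{k\to\infty}\int_\Omega\rho_{\tau_k}v_k\,dx=\frac{M}{|\partial\Omega|}\int_{\partial\Omega}v\,d\sigma$, which is precisely the content of the trace-type estimates of \cite[Lemmas 3.1.22, 3.1.28]{phd}; granting it, the vanishing of the constraint together with the constancy of $v$ gives $\frac{M}{|\partial\Omega|}\,v\,|\partial\Omega|=Mv=0$, hence $v=0$.

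To see why this last identity holds (and to indicate its proof) I would split $\int_\Omega\rho_{\tau_k}v_k\,dx$ into the contribution of $\Omega\setminus\overline{\omega_{\tau_k}}$, where $\rho_{\tau_k}=\tau_k$ and the integral is $O(\tau_k)\|v_k\|_{L^2(\Omega)}\to0$, and the contribution of the strip $\omega_{\tau_k}$. On the strip I pass to the curvilinear coordinates $\psi_{\tau_k}$ of Section \ref{sec:2}, using $|\det D\psi_{\tau_k}|=\tau_k(1-\tau_k\xi\kappa(s))$ together with $\tau_k\rho_{\tau_k}\to M/|\partial\Omega|$, which follows from \eqref{oe} and \eqref{asymptotic_rho}. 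As $\tau_k\to0$ the Jacobian factor tends to $1$, while the trace estimate forces $(v_k\circ\psi_{\tau_k})(\cdot,\xi)$ to approach the boundary trace of $v$, so the strip integral converges to $\frac{M}{|\partial\Omega|}\int_0^{|\partial\Omega|}v(\gamma(s))\,ds=\frac{M}{|\partial\Omega|}\int_{\partial\Omega}v\,d\sigma$, as required. This closes the contradiction and proves the inequality with a constant $C_\Omega$ independent of $\varepsilon$.
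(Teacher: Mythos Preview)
Your proposal is correct and follows essentially the same contradiction-and-compactness argument as the paper, with the identical case split on $\tau>0$ versus $\tau=0$. You supply a little more detail than the paper does---dominated convergence for the $\tau>0$ case and a curvilinear-coordinate sketch for the mass-concentration identity in the $\tau=0$ case---whereas the paper simply asserts the former and defers the latter to \cite[Lemmas 3.1.22, 3.1.28]{phd}.
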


We are now ready to prove Proposition \ref{L2<eps}.

\begin{proposition}\label{L2<eps} 
If $\varepsilon\in(0,\varepsilon_\Omega)$ and $v\in H^1(\Omega)$, then 
$$
\|v\|_{L^2(\Omega)}\le \max\left\{C_{\Omega},\sqrt\frac{|\Omega|}{M}\right\}\|v\|_{\varepsilon},$$
where $C_\Omega$ is the constant which appears in Lemma \ref{poinc_peso}.
\end{proposition}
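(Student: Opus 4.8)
The plan is to split $v\in H^1(\Omega)$ into its $\rho_\varepsilon$-weighted average and the remainder, and to estimate the two pieces separately. Concretely, write $v = \bar v_\varepsilon + (v-\bar v_\varepsilon)$ where $\bar v_\varepsilon := \frac{1}{M}\int_\Omega \rho_\varepsilon v\,dx$ is a constant. Since the two summands need not be orthogonal in $L^2(\Omega)$, rather than a Pythagorean identity I would use the triangle inequality, but it is cleaner to bound $\|v\|_{L^2(\Omega)}^2$ directly using that $\int_\Omega \rho_\varepsilon (v-\bar v_\varepsilon)\,dx = 0$, which gives $\int_\Omega \rho_\varepsilon v^2\,dx = M\bar v_\varepsilon^2 + \int_\Omega \rho_\varepsilon (v-\bar v_\varepsilon)^2\,dx$. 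Actually the slickest route avoids even this: estimate $\|v\|_{L^2(\Omega)} \le \|v-\bar v_\varepsilon\|_{L^2(\Omega)} + \|\bar v_\varepsilon\|_{L^2(\Omega)}$ would produce a sum rather than a max, so instead I would work with squares and the weighted decomposition.

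The first key step is to control the remainder term by the gradient: by Lemma \ref{poinc_peso} we have $\|v-\bar v_\varepsilon\|_{L^2(\Omega)} \le C_\Omega \|\nabla v\|_{L^2(\Omega)}$, with $C_\Omega$ independent of $\varepsilon$. The second key step is to control the constant term $\bar v_\varepsilon$ by the weighted $L^2$-mass of $v$: by the Cauchy--Schwarz inequality applied with the weight $\rho_\varepsilon$,
\[
M^2\bar v_\varepsilon^2 = \left(\int_\Omega \rho_\varepsilon v\,dx\right)^2 = \left(\int_\Omega \rho_\varepsilon^{1/2}\cdot \rho_\varepsilon^{1/2} v\,dx\right)^2 \le \left(\int_\Omega \rho_\varepsilon\,dx\right)\left(\int_\Omega \rho_\varepsilon v^2\,dx\right) = M\int_\Omega \rho_\varepsilon v^2\,dx,
\]
so $\|\bar v_\varepsilon\|_{L^2(\Omega)}^2 = |\Omega|\,\bar v_\varepsilon^2 \le \frac{|\Omega|}{M}\int_\Omega \rho_\varepsilon v^2\,dx$.

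Then I would combine: $\|v\|_{L^2(\Omega)}^2 \le \left(\|v-\bar v_\varepsilon\|_{L^2(\Omega)} + \|\bar v_\varepsilon\|_{L^2(\Omega)}\right)^2$ is awkward; instead I would bound $\|v\|_{L^2(\Omega)}^2 \le 2\|v-\bar v_\varepsilon\|_{L^2(\Omega)}^2 + 2\|\bar v_\varepsilon\|_{L^2(\Omega)}^2$, which yields constants $2C_\Omega^2$ and $2|\Omega|/M$ rather than the sharp $\max$ stated. To recover exactly the clean bound with $\max\{C_\Omega,\sqrt{|\Omega|/M}\}$ I would instead use the weighted orthogonality: since $v - \bar v_\varepsilon$ has zero $\rho_\varepsilon$-average, $\int_\Omega \rho_\varepsilon v^2\,dx = M\bar v_\varepsilon^2 + \int_\Omega\rho_\varepsilon(v-\bar v_\varepsilon)^2\,dx$; combined with the elementary bound $\|w\|_{L^2(\Omega)}^2 \le \|w - \bar w_\varepsilon\|_{L^2(\Omega)}^2$ — no, that is false too. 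The honest conclusion is that the intended proof writes $\|v\|_{L^2}\le \max\{C_\Omega,\sqrt{|\Omega|/M}\}\,\sqrt{\|\nabla v\|_{L^2}^2 + \int_\Omega \rho_\varepsilon v^2\,dx} = \max\{\cdots\}\|v\|_\varepsilon$ after using $\|v\|_{L^2}\le \|v-\bar v_\varepsilon\|_{L^2}+\|\bar v_\varepsilon\|_{L^2}$ only when one of the two terms vanishes, so the real argument must be: split orthogonally in $L^2$ is not available, hence one shows the two estimates and then notes $a+b \le \sqrt{2}\sqrt{a^2+b^2}$ is not tight — therefore I anticipate the main obstacle is precisely getting the sharp constant, and I would resolve it by choosing the decomposition $v = (v - \bar v_\varepsilon) + \bar v_\varepsilon$ and observing that $\|v\|_{L^2(\Omega)} \le \|v-\bar v_\varepsilon\|_{L^2(\Omega)} + \|\bar v_\varepsilon\|_{L^2(\Omega)} \le \max\{C_\Omega, \sqrt{|\Omega|/M}\}\big(\|\nabla v\|_{L^2(\Omega)} + (\int_\Omega\rho_\varepsilon v^2)^{1/2}\big)$ and then using that for the actual application only the crude form matters; alternatively, and more likely what the authors do, they interpret $\|v\|_\varepsilon^2 = \|\nabla v\|_{L^2}^2 + \int_\Omega \rho_\varepsilon v^2$ and deduce the bound with the $\max$ by the pointwise inequality $\alpha x + \beta y \le \max\{\alpha,\beta\}(x+y)$ together with $x+y\le\sqrt2\,(x^2+y^2)^{1/2}$, accepting an extra harmless $\sqrt 2$, OR they genuinely have an orthogonal decomposition because $\bar v_\varepsilon$ is constant and $\int_\Omega(v-\bar v_\varepsilon)\,dx$ need not vanish — so I would simply present the clean two-estimate argument and combine via $\|v\|_{L^2(\Omega)}^2 = \|v - \bar v_\varepsilon + \bar v_\varepsilon\|_{L^2(\Omega)}^2$ expanded, bounding the cross term by Young's inequality, and verify the resulting constant collapses to the stated $\max$.
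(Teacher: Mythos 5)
Your two substantive estimates are exactly the ones in the paper's proof, which uses the same decomposition $v=(v-\bar v_\varepsilon)+\bar v_\varepsilon$ with $\bar v_\varepsilon:=\frac{1}{M}\int_\Omega\rho_\varepsilon v\,dx$: Lemma \ref{poinc_peso} controls $\|v-\bar v_\varepsilon\|_{L^2(\Omega)}$ by $C_\Omega\|\nabla v\|_{L^2(\Omega)}$, and the weighted Cauchy--Schwarz inequality gives $\|\bar v_\varepsilon\|_{L^2(\Omega)}\le\sqrt{|\Omega|/M}\,\bigl(\int_\Omega\rho_\varepsilon v^2\,dx\bigr)^{1/2}$. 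The paper then simply adds these by the triangle inequality and invokes a ``straightforward computation.'' You should commit to that: with $a=\|\nabla v\|_{L^2(\Omega)}$ and $b=\bigl(\int_\Omega\rho_\varepsilon v^2\,dx\bigr)^{1/2}$ one gets $\|v\|_{L^2(\Omega)}\le C_\Omega a+\sqrt{|\Omega|/M}\,b\le\bigl(C_\Omega^2+|\Omega|/M\bigr)^{1/2}\bigl(a^2+b^2\bigr)^{1/2}=\bigl(C_\Omega^2+|\Omega|/M\bigr)^{1/2}\|v\|_\varepsilon$ by Cauchy--Schwarz in $\mathbb{R}^2$. There is no orthogonality available ($\bar v_\varepsilon$ is the $\rho_\varepsilon$-weighted average, not the Lebesgue average, so the $L^2(\Omega)$ cross term does not vanish), and none is needed.

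Your instinct that the exact constant $\max\{C_\Omega,\sqrt{|\Omega|/M}\}$ does not drop out of this argument is correct: what the triangle inequality yields is $(C_\Omega^2+|\Omega|/M)^{1/2}$, which can exceed the stated $\max$ by a factor up to $\sqrt{2}$. This is a cosmetic imprecision in the statement rather than a gap in the method: every subsequent use of the proposition requires only an $\varepsilon$-independent constant, so the discrepancy is harmless. Where your write-up fails is not in the mathematics but in the conclusion --- you cycle through several candidate endings (Young's inequality, a hoped-for orthogonal splitting, a ``collapse'' of the constant) without settling on one, and the last of these (``verify the resulting constant collapses to the stated $\max$'') is a verification that would not succeed. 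State the two estimates, add them, apply Cauchy--Schwarz in $\mathbb{R}^2$, and record the constant you actually obtain.
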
 
\begin{proof}
First we observe that  
\begin{equation}\label{J1e_eq1}
\int_{\Omega}\rho_{\varepsilon} v dx=\int_{\Omega}\rho_{\varepsilon}^{\frac{1}{2}}\rho_{\varepsilon}^{\frac{1}{2}}v dx\leq \left(\int_{\Omega}\rho_{\varepsilon}dx\right)^{\frac{1}{2}}\left(\int_{\Omega}\rho_{\varepsilon}v^2 dx\right)^{\frac{1}{2}}=M^{\frac{1}{2}}\left(\int_{\Omega}\rho_{\varepsilon}v^2 dx\right)^{\frac{1}{2}}.
\end{equation}
Then, by  Lemma \ref{poinc_peso} and by \eqref{J1e_eq1} we deduce that
\begin{equation*}
\begin{split}
\|v\|_{L^2(\Omega)}&=\norm{v-\frac{1}{M}\int_{\Omega}\rho_{\varepsilon} v dx+\frac{1}{M}\int_{\Omega}\rho_{\varepsilon} v dx}_{L^2(\Omega)}\\
& \leq\norm{v-\frac{1}{M}\int_{\Omega}\rho_{\varepsilon} v dx}_{L^2(\Omega)}+\norm{\frac{1}{M}\int_{\Omega}\rho_{\varepsilon} v dx}_{L^2(\Omega)}\\
& \leq C_{\Omega}\|\nabla v\|_{L^2(\Omega)}+\sqrt\frac{|\Omega|}{M}\left(\int_{\Omega}\rho_{\varepsilon}v^2 dx\right)^{\frac{1}{2}}\,.
\end{split}
\end{equation*} 
Now the validity of the proposition follows by a straightforward computation. 
\end{proof}

\subsection{Known results on the limit behavior of $\lambda_j(\varepsilon)$}

In  the following Theorem \ref{convergence} we recall some results on the {limit} behavior of the eigenelements of problem \eqref{Neumann}.
\begin{theorem}\label{convergence}
The following statements hold.
\begin{enumerate}
\item[(i)]
For all $j\in\mathbb N$ it holds
$$
\lim_{\varepsilon\rightarrow 0}\lambda_j(\varepsilon)=\mu_{j}.
$$
\item[(ii)] Let $\mu_j$ be a simple eigenvalue of problem \eqref{Steklov} and let $\lambda_j(\varepsilon)$ be such that $\lim_{\varepsilon\rightarrow 0}\lambda_j(\varepsilon)=\mu_{j}$. Then there exists $\varepsilon_j>0$ such that $\lambda_j(\varepsilon)$ is simple for all $\varepsilon\in(0,\varepsilon_j)$.
\end{enumerate}
\end{theorem}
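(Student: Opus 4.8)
The plan is to prove both statements by exploiting the resolvent operator $\mathcal A_\varepsilon$ of Proposition \ref{Ae} together with the abstract perturbation Lemma \ref{lemma_fondamentale}, and by comparing $\mathcal A_\varepsilon$ with the analogous resolvent operator attached to the limiting Steklov problem \eqref{Steklov}.

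\textbf{Statement (i).} First I would recall the variational characterisation of the eigenvalues: since $\lambda_j(\varepsilon)=\eta_j(\varepsilon)^{-1}-1$ where $\eta_j(\varepsilon)$ is the $(j+1)$-th eigenvalue of the compact self-adjoint positive operator $\mathcal A_\varepsilon$, and $\mu_j=\sigma_j^{-1}-1$ for the corresponding resolvent operator $\mathcal A_0$ of the Steklov problem (the operator on $H^1(\Omega)$ mapping $f$ to the solution $u$ of $\int_\Omega \nabla u\cdot\nabla\varphi+\frac{M}{|\partial\Omega|}\int_{\partial\Omega}u\varphi=\frac{M}{|\partial\Omega|}\int_{\partial\Omega}f\varphi$), it suffices to show that $\mathcal A_\varepsilon\to\mathcal A_0$ in a suitable sense. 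The key estimate is that for fixed $f\in H^1(\Omega)$ one has $\|\mathcal A_\varepsilon f-\mathcal A_0 f\|_{H^1(\Omega)}\to 0$ as $\varepsilon\to 0$; this reduces, after subtracting the two variational identities and testing with $\varphi=\mathcal A_\varepsilon f-\mathcal A_0 f$, to controlling the difference between $\int_\Omega\rho_\varepsilon f\varphi\,dx$ and $\frac{M}{|\partial\Omega|}\int_{\partial\Omega}f\varphi\,d\sigma$ (and similarly with $f$ replaced by $\mathcal A_0 f$). That difference is handled by the change of variables of Section \ref{sec:2} and the trace-type convergence already invoked in Lemma \ref{poinc_peso} (the argument based on \cite[Lemmas 3.1.22, 3.1.28]{phd}): on the strip $\omega_\varepsilon$, $\rho_\varepsilon$ equals $\tilde\rho_\varepsilon/\varepsilon$ up to the $O(\varepsilon)$ bulk term, $|\det D\psi_\varepsilon|=\varepsilon(1-\varepsilon\xi\kappa(s))$, and $\tilde\rho_\varepsilon\to M/|\partial\Omega|$ by \eqref{asymptotic_rho}, so $\int_{\omega_\varepsilon}\rho_\varepsilon g\,dx\to\frac{M}{|\partial\Omega|}\int_{\partial\Omega}g\,d\sigma$ for $g\in H^1(\Omega)$, uniformly for $g$ in bounded sets after using the uniform trace inequality. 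Once compact convergence of $\mathcal A_\varepsilon$ to $\mathcal A_0$ is established, the convergence of each eigenvalue $\eta_j(\varepsilon)\to\sigma_j$, hence $\lambda_j(\varepsilon)\to\mu_j$, follows from Lemma \ref{lemma_fondamentale} applied with $A=\mathcal A_\varepsilon$, $u$ a normalised Steklov eigenfunction, $\eta=\sigma_j$ and $r=\|\mathcal A_\varepsilon u-\sigma_j u\|_\varepsilon\to 0$, together with the reverse inequality obtained by swapping the roles of the two operators (this is exactly the classical spectral-stability argument, and the min-max principle gives the matching of indices).

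\textbf{Statement (ii).} Assume $\mu_j$ is simple, with $\mu_{j-1}<\mu_j<\mu_{j+1}$ (interpreting $\mu_{-1}=-\infty$), and let $\lambda_j(\varepsilon)\to\mu_j$. By part (i) also $\lambda_{j-1}(\varepsilon)\to\mu_{j-1}$ and $\lambda_{j+1}(\varepsilon)\to\mu_{j+1}$, so there is $\varepsilon_j>0$ with $\lambda_{j-1}(\varepsilon)<\mu_j-\delta$ and $\lambda_{j+1}(\varepsilon)>\mu_j+\delta$ for all $\varepsilon\in(0,\varepsilon_j)$, where $2\delta:=\min\{\mu_j-\mu_{j-1},\mu_{j+1}-\mu_j\}$. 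Translating back to $\mathcal A_\varepsilon$, this says that in the interval $I_\varepsilon$ of $\eta$-values corresponding to $(\mu_j-\delta,\mu_j+\delta)$ the operator $\mathcal A_\varepsilon$ has at most one eigenvalue counted without multiplicity coming from the index $j$. The point is to rule out that two (or more) of the $\eta_i(\varepsilon)$ coincide and land in $I_\varepsilon$, i.e. that $\lambda_j(\varepsilon)$ has multiplicity $\geq 2$; this is precisely excluded because $\lambda_{j-1}(\varepsilon)$ and $\lambda_{j+1}(\varepsilon)$ are outside $(\mu_j-\delta,\mu_j+\delta)$, so the only eigenvalue of \eqref{Neumann} in that interval is $\lambda_j(\varepsilon)$ and it is simple. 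I would phrase this cleanly using the fact that the eigenvalues are listed with multiplicity: if $\lambda_j(\varepsilon)=\lambda_{j+1}(\varepsilon)$ then, since $\lambda_{j+1}(\varepsilon)\to\mu_{j+1}>\mu_j$, we get a contradiction for $\varepsilon$ small; similarly $\lambda_{j-1}(\varepsilon)=\lambda_j(\varepsilon)$ is impossible for $\varepsilon$ small, and hence $\lambda_j(\varepsilon)$ is simple on $(0,\varepsilon_j)$.

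\textbf{Main obstacle.} The routine part is the abstract spectral-stability bookkeeping; the genuine analytic content, and the step I expect to require the most care, is the uniform convergence $\int_{\omega_\varepsilon}\rho_\varepsilon g\,dx\to\frac{M}{|\partial\Omega|}\int_{\partial\Omega}g\,d\sigma$ and the resulting compact convergence $\mathcal A_\varepsilon\to\mathcal A_0$ in a framework where the Hilbert space norm $\|\cdot\|_\varepsilon$ itself depends on $\varepsilon$. One must be careful that the norms $\|\cdot\|_\varepsilon$ are uniformly equivalent to $\|\cdot\|_{H^1(\Omega)}$ (which follows from \eqref{eO}, the uniform bound $\inf_{\varepsilon}\tilde\rho_\varepsilon>0$, and Proposition \ref{L2<eps}), so that bounds and weak/strong limits can be transferred between the spaces $\mathcal H_\varepsilon(\Omega)$ and the fixed space $H^1(\Omega)$; and the trace-convergence estimate must be made uniform for $g$ ranging in bounded subsets of $H^1(\Omega)$, which is where the change of variables $\psi_\varepsilon$, formula \eqref{grad2}, the expansion \eqref{asymptotic_rho}, and a uniform trace inequality on the tubular neighbourhood all come together. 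Since all of these ingredients are either already in the excerpt or quoted from \cite{arrieta,buosoprovenzano,phd}, the cleanest exposition is to cite those references for the compact convergence of the resolvents and then deduce (i) and (ii) from Lemma \ref{lemma_fondamentale} and the min-max principle as above.
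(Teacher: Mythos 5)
Your proposal is correct and follows essentially the same route as the paper, which itself only sketches the argument: the paper states that Theorem \ref{convergence} "can be carried out by using the notion of compact convergence for the resolvent operators" and otherwise defers to Arrieta {\it et al.}~\cite{arrieta} and Buoso and Provenzano \cite{buosoprovenzano}. Your identification of the key analytic step (the uniform convergence $\int_{\omega_\varepsilon}\rho_\varepsilon g\,dx\to\frac{M}{|\partial\Omega|}\int_{\partial\Omega}g\,d\sigma$ in the $\varepsilon$-dependent norm framework) and your separation argument for (ii) match the strategy the paper relies on, including its later use in Lemma \ref{only}.
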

The proof of Theorem \ref{convergence} can be carried out  by using the notion of compact convergence for the resolvent operators, and can also be obtained as a consequence of the more general results proved  in Arrieta {\it et al.}~\cite{arrieta} (see also Buoso and Provenzano \cite{buosoprovenzano}). 

From Theorem \ref{convergence}, it follows that the function $\lambda_j(\cdot)$ which takes $\varepsilon>0$ to $\lambda_j(\varepsilon)$ can be extended with continuity at $\varepsilon =0$ by setting   $\lambda_j(0):=\mu_{j}$ for all $j\in {\mathbb{N}}$.

%%%%%%%%%%%%%%%%%%%%%%%%%%%%%%%%%%%%%%%%%%%%%%%%%%%%%%%%%%%%%%%%%%%%%%%%%%%%%%%%%%%%%%%%%%%%%%%%%%%%%%%%%%%%%%%%%%%%%%%%%%%%%
%%%%%%%%%%%%%%%%%%%%%%%%%%%%%%%%%%%%%%%%%%%%%%%%%%%%%%%%%%%%%%%%%%%%%%%%%%%%%%%%%%%%%%%%%%%%%%%%%%%%%%%%%%%%%%%%%%%%%%%%%%%%%
%%%%%%%%%%%%%%%%%%%%%%%%%%%%%%%%%%%%%%%%% MAIN RESULTS %%%%%%%%%%%%%%%%%%%%%%%%%%%%%%%%%%%%%%%%%%%%%%%%%%%%%%%%%%%%%%%
%%%%%%%%%%%%%%%%%%%%%%%%%%%%%%%%%%%%%%%%%%%%%%%%%%%%%%%%%%%%%%%%%%%%%%%%%%%%%%%%%%%%%%%%%%%%%%%%%%%%%%%%%%%%%%%%%%%%%%%%%%%%%
%%%%%%%%%%%%%%%%%%%%%%%%%%%%%%%%%%%%%%%%%%%%%%%%%%%%%%%%%%%%%%%%%%%%%%%%%%%%%%%%%%%%%%%%%%%%%%%%%%%%%%%%%%%%%%%%%%%%%%%%%%%%%

\section{Description of the main results}\label{sec:3}

In this section we state our main Theorems \ref{asymptotic_eigenvalues} and \ref{asymptotic_eigenfunctions} which will be proved in Sections \ref{sec:4} and \ref{sec:5} below. We will use the following notation: if $j\in\mathbb N$ and $\mu_j$ is a simple eigenvalue of problem \eqref{Steklov}, then we take 
\[
\varepsilon_{\Omega,j}:=\min\{\varepsilon_j\,,\,\varepsilon_\Omega\}
\] 
with $\varepsilon_j$ as in Theorem \ref{convergence} and $\varepsilon_\Omega$ as in \eqref{eO},  so that $\lambda_j(\varepsilon)$ is a simple eigenvalue of \eqref{Neumann} for all $\varepsilon\in(0,\varepsilon_{\Omega,j})$. If $f$ is an invertible function, than  $f^{(-1)}$ denotes the inverse of $f$, as opposed to $r^{-1}$ and $f^{-1}$ which denote the reciprocal of a real non-zero number or of a non-vanishing function.

In the following Theorem \ref{asymptotic_eigenvalues} we provide an asymptotic expansion of the eigenvalue $\lambda_j(\varepsilon)$ up to a remainder of order $\varepsilon^2$. 

\begin{theorem}\label{asymptotic_eigenvalues}
Let $j\in\mathbb N$. Assume that $\mu_j$ is a simple eigenvalue of problem \eqref{Steklov}. Then
\begin{equation}\label{expansion_eigenvalues}
\lambda_j(\varepsilon)=\mu_j+\varepsilon\mu_j^1+O(\varepsilon^2)\quad\text{as }\varepsilon\rightarrow 0
\end{equation}
where 
\begin{equation}\label{top_der_formula}
\mu_j^1=\frac{|\Omega|\mu_j}{M}-\frac{|\partial\Omega|\mu_j}{M}\int_{\Omega}u_j^2dx+\frac{2M\mu_j^2}{3|\partial\Omega|}+\frac{\mu_j}{2}\int_{\partial\Omega}u_j^2\kappa{\circ\gamma^{(-1)}} d\sigma-\frac{K\mu_j}{2|\partial\Omega|}.
\end{equation}
The constant $K$ is given by \eqref{K} and $u_j\in H^1(\Omega)$ is the unique eigenfunction of problem \eqref{Steklov} associated with the eigenvalue $\mu_j$ {which satisfies the additional condition} 
\begin{equation}\label{ucondition}
\int_{\partial\Omega}u_j^2 d\sigma=1.
\end{equation}

\end{theorem}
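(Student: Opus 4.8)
The plan is to realize $\lambda_j(\varepsilon)$ and $\mu_j$ as eigenvalues of operators on a common Hilbert space and to apply the perturbation lemma (Lemma \ref{lemma_fondamentale}) to $\mathcal{A}_\varepsilon$. First I would set up the comparison. The operator $\mathcal{A}_\varepsilon$ has eigenvalues $(1+\lambda_j(\varepsilon))^{-1}$, and the Steklov problem \eqref{Steklov} admits an analogous resolvent operator, say $\mathcal{A}_0$, acting on $H^1(\Omega)$ equipped with a limit bilinear form involving the boundary integral $\frac{M}{|\partial\Omega|}\int_{\partial\Omega} uv\, d\sigma$; its eigenvalues are $(1+\mu_j)^{-1}$. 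The key asymptotic input is the expansion \eqref{asymptotic_rho} for $\tilde\rho_\varepsilon$, which, combined with the change of variables $\psi_\varepsilon$ and the Jacobian $\varepsilon(1-\varepsilon\xi\kappa(s))$, lets me expand the quadratic form $\int_\Omega \rho_\varepsilon u v\, dx$ in powers of $\varepsilon$. Concretely, writing $\rho_\varepsilon = \varepsilon + \frac1\varepsilon \tilde\rho_\varepsilon \chi_{\omega_\varepsilon}$ and using that for $H^1$ functions the trace on $\partial\Omega$ governs the leading behavior of the strip integral, one gets
\[
\int_{\Omega}\rho_\varepsilon u v\, dx = \frac{M}{|\partial\Omega|}\int_{\partial\Omega} uv\, d\sigma + \varepsilon\,\mathcal{Q}_1(u,v) + O(\varepsilon^2),
\]
where $\mathcal{Q}_1$ collects the $O(\varepsilon)$ contributions: the bulk term $\int_\Omega uv\, dx$, the curvature correction $-\frac{M}{2|\partial\Omega|^2}\int_{\partial\Omega} uv\kappa\, d\sigma$ and $\frac{K M - \ldots}{2|\partial\Omega|^2}$-type terms from \eqref{asymptotic_rho}, plus the term coming from expanding the trace/extension of $u$ off $\partial\Omega$ into the strip (this produces the $\mu_j$-weighted normal-derivative contributions, hence the $\mu_j^2$ and $\kappa$ terms in \eqref{top_der_formula}). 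This expansion of the forms is the main technical obstacle, and it is where the auxiliary boundary value problems of the paper enter: the first-order profile of the eigenfunction in the strip solves an ODE in $\xi$ whose solution contributes the $\frac{2M\mu_j^2}{3|\partial\Omega|}$ term.

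Next I would carry out the quasimode construction. Take $u_j$ the Steklov eigenfunction normalized by \eqref{ucondition}, build an approximate eigenfunction $U_\varepsilon = u_j + \varepsilon(\text{corrector})$ where the corrector is the solution of the relevant auxiliary problem (well-posed by Appendix A), and show using the form expansion above that
\[
\bigl\| \mathcal{A}_\varepsilon U_\varepsilon - (1+\mu_j+\varepsilon\mu_j^1)^{-1} U_\varepsilon \bigr\|_\varepsilon \le C\varepsilon^2 \|U_\varepsilon\|_\varepsilon.
\]
The identity $\langle \mathcal{A}_\varepsilon f, g\rangle_\varepsilon = \int_\Omega \rho_\varepsilon f g\, dx$ from Proposition \ref{Ae} reduces this estimate to testing the defining weak identity against arbitrary $\varphi \in \mathcal{H}_\varepsilon(\Omega)$ and bounding the residual, using Proposition \ref{L2<eps} to control $L^2$ norms by $\|\cdot\|_\varepsilon$ uniformly in $\varepsilon$. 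Then Lemma \ref{lemma_fondamentale} furnishes an eigenvalue $\eta^*(\varepsilon)$ of $\mathcal{A}_\varepsilon$ with $|\eta^*(\varepsilon) - (1+\mu_j+\varepsilon\mu_j^1)^{-1}| \le C\varepsilon^2$, and the second part of the lemma, together with the simplicity of $\lambda_j(\varepsilon)$ for $\varepsilon < \varepsilon_{\Omega,j}$ (Theorem \ref{convergence}(ii)) and the convergence $\lambda_j(\varepsilon)\to\mu_j$, identifies $\eta^*(\varepsilon)$ with $(1+\lambda_j(\varepsilon))^{-1}$ for $\varepsilon$ small. Inverting the relation $\eta\mapsto \eta^{-1}-1$ then yields $\lambda_j(\varepsilon) = \mu_j + \varepsilon\mu_j^1 + O(\varepsilon^2)$.

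Finally I would consolidate the constant. Collecting all $O(\varepsilon)$ contributions to $\langle \mathcal{A}_\varepsilon U_\varepsilon - \eta U_\varepsilon, \varphi\rangle_\varepsilon$ and imposing that the $O(\varepsilon)$ term vanish forces the value of $\mu_j^1$; matching against the explicit pieces — the factor $|\Omega|\mu_j/M$ from the bulk density $\varepsilon$, the $-|\partial\Omega|\mu_j/M \int_\Omega u_j^2$ from renormalizing the trace weight, the $\frac{2M\mu_j^2}{3|\partial\Omega|}$ from the strip profile, and the two curvature terms $\frac{\mu_j}{2}\int_{\partial\Omega} u_j^2 \kappa\circ\gamma^{(-1)} d\sigma - \frac{K\mu_j}{2|\partial\Omega|}$ from the Jacobian expansion \eqref{oe}–\eqref{asymptotic_rho} — gives precisely \eqref{top_der_formula}. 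I expect the genuinely hard part to be the careful bookkeeping of the strip integrals: one must expand $u\circ\psi_\varepsilon(s,\xi)$ in $\varepsilon$, keep track of the normal derivative $\partial_\nu u = \frac{M}{|\partial\Omega|}\mu_j u$ on $\partial\Omega$ via the Steklov condition, integrate the resulting polynomials in $\xi$ over $(0,1)$, and combine everything with \eqref{asymptotic_rho} without sign or coefficient errors; the operator-theoretic skeleton via Lemmas \ref{lemma_fondamentale} and Propositions \ref{Ae}, \ref{L2<eps} is comparatively routine.
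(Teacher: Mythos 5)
Your overall strategy is exactly the one the paper follows: build a quasimode around the normalized Steklov eigenfunction $u_j$, consisting of a boundary-layer profile in the rapid variable $\xi$ (the paper's $w_j$, whence the term $\tfrac{2M\mu_j^2}{3|\partial\Omega|}$) plus a smooth corrector $u_j^1$ solving the auxiliary problem of Appendix A, let the Fredholm solvability condition of that problem force the value of $\mu_j^1$, estimate the residual of $\mathcal A_\varepsilon$ on the quasimode against arbitrary $\varphi\in\mathcal H_\varepsilon(\Omega)$ using Proposition \ref{L2<eps}, and then invoke Lemma \ref{lemma_fondamentale} together with the simplicity and convergence statements of Theorem \ref{convergence} to identify the nearby eigenvalue with $\frac{1}{1+\lambda_j(\varepsilon)}$. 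This is the skeleton of Sections \ref{sec:4} and \ref{sec:5}, and your accounting of where each summand of \eqref{top_der_formula} comes from is accurate.

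There is, however, one concrete gap: the ansatz $U_\varepsilon=u_j+\varepsilon(v_{j,\varepsilon}+u_j^1)$ is not accurate enough to yield the claimed residual bound $\bigl\|\mathcal A_\varepsilon U_\varepsilon-(1+\mu_j+\varepsilon\mu_j^1)^{-1}U_\varepsilon\bigr\|_\varepsilon\le C\varepsilon^2$. After the leading cancellations, the residual tested against $\varphi$ still contains terms of the form $\varepsilon\int_0^{|\partial\Omega|}\int_0^1 g(s,\xi)\,(\varphi\circ\psi_\varepsilon)\,d\xi\,ds$ with $g$ bounded (for instance the curvature term $-\varepsilon\frac{\mu_jM}{|\partial\Omega|}\int\!\!\int\kappa(s)(2\xi-1)(u_j\circ\gamma)(\varphi\circ\psi_\varepsilon)$ produced by $\varepsilon\int_{\omega_\varepsilon}\nabla v_{j,\varepsilon}\cdot\nabla\varphi$, and the term $-\varepsilon\frac{M^2\mu_j^2}{2|\partial\Omega|^2}\int\!\!\int(\xi-1)^2(u_j\circ\gamma)(\varphi\circ\psi_\varepsilon)$ produced by $\varepsilon\mu_j\int_\Omega\rho_\varepsilon v_{j,\varepsilon}\varphi$). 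Since $\int_0^{|\partial\Omega|}\int_0^1|\varphi\circ\psi_\varepsilon|\,d\xi\,ds$ is only controlled by $\|\varphi\|_\varepsilon$ and not by $\sqrt\varepsilon\,\|\varphi\|_\varepsilon$, these terms are genuinely of size $\varepsilon\|\varphi\|_\varepsilon$, so your key estimate would fail and Lemma \ref{lemma_fondamentale} would only return the zeroth-order information $\lambda_j(\varepsilon)=\mu_j+O(\varepsilon)$. The paper removes them by adding a second-order boundary-layer corrector $\varepsilon^2 v^1_{j,\varepsilon}$, whose profile $w_j^1$ in \eqref{w1} solves a further ODE in $\xi$ with right-hand side built from $\kappa\,\partial_\xi w_j$, $w_j$, $u_j^1$ and $\mu_j^1$; it is precisely the Neumann datum of $w_j^1$ at $\xi=0$ that produces the boundary terms matching the boundary condition in \eqref{u1_problem_simple_true}. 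You would discover this while executing your plan, but as written the quasimode is one term short, and the term is not optional.
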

 
 In Theorem \ref{asymptotic_eigenfunctions} here below we show an asymptotic expansion for the eigenfunction $u_{j,\varepsilon}$ associated to $\lambda_j(\varepsilon)$.

\begin{theorem}\label{asymptotic_eigenfunctions}
Let $j\in\mathbb N$ and assume that $\mu_j$ is a simple eigenvalue of problem \eqref{Steklov}. {Let $0<\varepsilon_{\Omega,j}<\varepsilon_\Omega$ be} such that $\lambda_j(\varepsilon)$ is a simple eigenvalue of problem \eqref{Neumann}  for all $\varepsilon\in(0,\varepsilon_{\Omega,j})$. Let $u_j$ be the unique eigenfunction of problem \eqref{Steklov} associated with $\mu_j$ {which satisfies the additional condition \eqref{ucondition}.}
For all $\varepsilon\in (0,\varepsilon_{\Omega,j})$, let $u_{j,\varepsilon}$ be the unique eigenfunction of problem \eqref{Neumann} corresponding to $\lambda_j(\varepsilon)$ {which satisfies the additional condition  
\begin{equation}\label{uecondition}
\frac{|\partial\Omega|}{M}\int_{\Omega}\rho_{\varepsilon}u_{j,\varepsilon}^2dx=1\,.
\end{equation}}
Then there exist $u_j^1\in H^1(\Omega)$ and $w_j\in H^1([0,|\partial\Omega|)\times(0,1))$ such that
\begin{equation}\label{expansion_eigenfunctions}
u_{j,\varepsilon}=u_j+\varepsilon u_j^1+\varepsilon v_{j,\varepsilon}+O(\varepsilon^2)\quad{\text{in }\ L^2(\Omega)\text{ as }\varepsilon\rightarrow 0},
\end{equation}
where the function $v_{j,\varepsilon}\in H^1(\Omega)$ is the extension by $0$ of $w_j\circ\psi_{\varepsilon}^{(-1)}$ to $\Omega$.
\end{theorem}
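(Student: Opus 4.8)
\emph{Proof proposal.} The plan is to follow the scheme of Ole{\u\i}nik \emph{et al.}~\cite{oleinik}: construct an explicit approximate eigenfunction of \eqref{Neumann} and then apply Lemma \ref{lemma_fondamentale} to the resolvent operator $\mathcal A_\varepsilon$ in order to deduce that the genuine eigenfunction $u_{j,\varepsilon}$ is close to it. The starting observation is that $u\in\mathcal H_\varepsilon(\Omega)$ is an eigenfunction of \eqref{Neumann} with eigenvalue $\lambda$ if and only if $\mathcal A_\varepsilon u=(1+\lambda)^{-1}u$, so the eigenvalues of the compact, self-adjoint, positive operator $\mathcal A_\varepsilon$ (Proposition \ref{Ae}) are the numbers $(1+\lambda_k(\varepsilon))^{-1}$, and by Theorem \ref{convergence}(ii) the one attached to $\lambda_j(\varepsilon)$ is simple and isolated, with a spectral gap bounded below uniformly for $\varepsilon\in(0,\varepsilon_{\Omega,j})$ (it converges to $(1+\mu_j)^{-1}$, which is simple and isolated). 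Everything then reduces to producing a good approximate eigenpair, controlling its residual in $\mathcal H_\varepsilon(\Omega)$, and transferring the final estimate to $L^2(\Omega)$ via Proposition \ref{L2<eps}.

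To build the approximation I would run a matched asymptotic expansion: an \emph{outer} part $u_j+\varepsilon u_j^1+\varepsilon^2(\cdots)$ in the bulk and, in the strip $\omega_\varepsilon$ written in the rapid curvilinear coordinates $(s,\xi)$ through $\psi_\varepsilon$, a \emph{boundary layer} $\varepsilon w_j(s,\xi)+\varepsilon^2(\cdots)$, with $w_j$ vanishing at $\xi=1$ so that its zero-extension $v_{j,\varepsilon}$ lies in $H^1(\Omega)$. Inserting the resulting function $U_\varepsilon$, with $\mu_j+\varepsilon\mu_j^1$ in place of $\lambda_j(\varepsilon)$, into the weak formulation of \eqref{Neumann}, using \eqref{grad2} for the Dirichlet energy over $\omega_\varepsilon$ and \eqref{asymptotic_rho}--\eqref{oe} for the mass term, and collecting powers of $\varepsilon$, one finds: at order $\varepsilon^0$ precisely the Steklov problem \eqref{Steklov} for $u_j$ (here one uses that $\rho_\varepsilon$ concentrates on $\partial\Omega$ with surface density $M/|\partial\Omega|$); at order $\varepsilon$ a boundary value problem on the reference band $[0,|\partial\Omega|)\times(0,1)$ for $w_j$, with a Neumann datum at $\xi=0$ dictated by $\partial u_\varepsilon/\partial\nu=0$ and the trace of $u_j$, and vanishing trace at $\xi=1$ (cf.\ Appendix A), together with a nonhomogeneous Steklov-type problem for $u_j^1$ in $\Omega$ whose data involve $u_j$, $w_j$, $\mu_j^1$, the curvature $\kappa$ and $K$. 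Since $\mu_j$ is simple, the Fredholm solvability condition of the $u_j^1$-problem determines $\mu_j^1$, and the ensuing computation —carried out in Section \ref{sec:5}— yields formula \eqref{top_der_formula}. Normalizing $u_j$ by \eqref{ucondition} and fixing the multiple of $u_j$ in $u_j^1$ left free by the solvability condition, one can arrange $\frac{|\partial\Omega|}{M}\int_\Omega\rho_\varepsilon U_\varepsilon^2\,dx=1+O(\varepsilon^2)$, consistently with \eqref{uecondition}.

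The core estimate is $\norm{\mathcal A_\varepsilon U_\varepsilon-(1+\mu_j+\varepsilon\mu_j^1)^{-1}U_\varepsilon}_\varepsilon\le C\varepsilon^2$. By the definition of $\mathcal A_\varepsilon$ this amounts to
\[
\Bigl|\int_\Omega\nabla U_\varepsilon\cdot\nabla\varphi\,dx-(\mu_j+\varepsilon\mu_j^1)\int_\Omega\rho_\varepsilon U_\varepsilon\varphi\,dx\Bigr|\le C\varepsilon^2\norm{\varphi}_\varepsilon\qquad\forall\,\varphi\in\mathcal H_\varepsilon(\Omega),
\]
together with $\norm{U_\varepsilon}_\varepsilon=O(1)$ (the boundary layer is only $O(\sqrt\varepsilon)$ in $\mathcal H_\varepsilon(\Omega)$, hence negligible against $u_j$). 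To prove the displayed bound one integrates by parts over $\Omega\setminus\overline\omega_\varepsilon$ and over $\omega_\varepsilon$ separately, uses that $u_j$ is harmonic and that $u_j^1$, $w_j$ solve the equations above to cancel the terms of orders $\varepsilon^{-1}$, $1$ and $\varepsilon$, and estimates the remainders by the weighted inequality $\norm{\varphi}_{L^2(\omega_\varepsilon)}\le C\sqrt\varepsilon\,\norm{\varphi}_\varepsilon$ (a byproduct of the arguments behind Proposition \ref{L2<eps}) and by the trace inequality $\norm{\varphi}_{L^2(\partial\Omega)}\le C\norm{\varphi}_\varepsilon$. With this in hand, Lemma \ref{lemma_fondamentale} applied with $r$ of order $\varepsilon^2$ and $r^*$ a fixed fraction of the uniform spectral gap produces a normalized eigenfunction $u_\varepsilon^*$ of $\mathcal A_\varepsilon$ associated with $\lambda_j(\varepsilon)$ —hence, by simplicity, $u_\varepsilon^*=\pm u_{j,\varepsilon}/\norm{u_{j,\varepsilon}}_\varepsilon$— with $\norm{U_\varepsilon/\norm{U_\varepsilon}_\varepsilon-u_\varepsilon^*}_\varepsilon\le C\varepsilon^2$. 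Passing to $L^2(\Omega)$ by Proposition \ref{L2<eps}, expanding $\norm{U_\varepsilon}_\varepsilon$ and $\norm{u_{j,\varepsilon}}_\varepsilon$ to order $\varepsilon$, using \eqref{uecondition} to match the normalizations, fixing the sign (for instance by requiring $\int_{\partial\Omega}u_{j,\varepsilon}u_j\,d\sigma\ge0$), and absorbing every term of size $O(\varepsilon^2)$ into the remainder, one arrives at \eqref{expansion_eigenfunctions} with the $u_j^1$ and $w_j$ just constructed.

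I expect the main obstacle to be the residual estimate itself. Once $\omega_\varepsilon$ is flattened by $\psi_\varepsilon$, the Dirichlet energy and the mass term generate, through \eqref{grad2} and \eqref{asymptotic_rho}, a whole hierarchy of contributions of negative and positive fractional powers of $\varepsilon$ —the gradient of $v_{j,\varepsilon}$ carries the factor $\varepsilon^{-1}\partial_\xi$— and one must verify that all of them down to $O(\varepsilon^2)$ cancel against the bulk part; this is exactly what pins down the auxiliary problems for $w_j$ and $u_j^1$ and the value of $\mu_j^1$, and it may force one to carry the outer and inner expansions one step further (the extra $O(\varepsilon^2)$ correctors do not enter \eqref{expansion_eigenfunctions}). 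The boundary terms on $\partial\Omega$ and on the interface $\{\xi=1\}$ require the traces of $u_j$ and $\partial u_j/\partial\nu$, available since $\Omega$ is of class $C^3$, and the matching at $\{\xi=1\}$ relies on the well-posedness of the band problem from Appendix A. The remaining ingredients —the characterization of $\mathcal A_\varepsilon$, the uniform spectral gap, and the norm comparison— are already in place from Section \ref{sec:2}.
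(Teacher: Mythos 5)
Your proposal is correct and follows essentially the same route as the paper: the same approximate eigenpair $u_j+\varepsilon v_{j,\varepsilon}+\varepsilon u_j^1+\varepsilon^2 v_{j,\varepsilon}^1$ with eigenvalue $(1+\mu_j+\varepsilon\mu_j^1)^{-1}$, the same $O(\varepsilon^2)$ residual and norm estimates in $\mathcal H_\varepsilon(\Omega)$, the same application of Lemma \ref{lemma_fondamentale} together with the uniform spectral gap (Lemma \ref{only2}), and the same transfer to $L^2(\Omega)$ via Proposition \ref{L2<eps}. The only cosmetic difference is that the paper writes down $w_j$, $w_j^1$ and the problem for $u_j^1$ explicitly and verifies them, rather than deriving them by a formal matched expansion, and it carries out the argument in two stages (first to order $\varepsilon$ in Section \ref{sec:4}, then to order $\varepsilon^2$ in Section \ref{sec:5}).
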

We shall present explicit formulas for $w_j$ {in terms of $\mu_j$ and $u_j$} (see formula \eqref{w0}) and we shall identify $u_j^1$ as the solution to a certain boundary value problem (see problem \eqref{u1_problem_simple_true}). {We also note that $\norm{v_{j,\varepsilon}}_{L^2(\Omega)}\in O(\sqrt{\varepsilon})$, so that the third term in \eqref{expansion_eigenfunctions} is in $O(\varepsilon^{\frac{3}{2}})$ in $L^2(\Omega)$ (cf. Proposition \ref{vjeL2}).}

The proof of Theorems \ref{asymptotic_eigenvalues} and \ref{asymptotic_eigenfunctions} consists of two steps. In the first step (Section \ref{sec:4}) we show that the quantity $\lambda_j(\varepsilon)-\mu_j$ is of order $\varepsilon$ as $\varepsilon$ tends to zero. Moreover,  we introduce the function $w_j$ and we show that {$\|u_{j,\varepsilon}-u_j\|_{L^2(\Omega)}$} is of order $\varepsilon$ as $\varepsilon$ tends to zero. In the second step (Section \ref{sec:5}) we complete the proof of Theorems \ref{asymptotic_eigenvalues} and \ref{asymptotic_eigenfunctions} {by proving the validity of \eqref{expansion_eigenvalues} and \eqref{expansion_eigenfunctions}} and we introduce the boundary value problem which identifies $u_j^1$.

%%%%%%%%%%%%%%%%%%%%%%%%%%%%%%%%%%%%%%%%%%%%%%%%%%%%%%%%%%%%%%%%%%%%%%%%%%%%%%%%%%%%%%%%%%%%%%%%%%%%%%%%%%%%%%%%%%%%%%%%%%%%%
%%%%%%%%%%%%%%%%%%%%%%%%%%%%%%%%%%%%%%%%%%%%%%%%%%%%%%%%%%%%%%%%%%%%%%%%%%%%%%%%%%%%%%%%%%%%%%%%%%%%%%%%%%%%%%%%%%%%%%%%%%%%%
%%%%%%%%%%%%%%%%%%%%%%%%%%%%%%%%%%%%%%%%% STEP ONE %%%%%%%%%%%%%%%%%%%%%%%%%%%%%%%%%%%%%%%%%%%%%%%%%%%%%%%%%%%%%%%
%%%%%%%%%%%%%%%%%%%%%%%%%%%%%%%%%%%%%%%%%%%%%%%%%%%%%%%%%%%%%%%%%%%%%%%%%%%%%%%%%%%%%%%%%%%%%%%%%%%%%%%%%%%%%%%%%%%%%%%%%%%%%
%%%%%%%%%%%%%%%%%%%%%%%%%%%%%%%%%%%%%%%%%%%%%%%%%%%%%%%%%%%%%%%%%%%%%%%%%%%%%%%%%%%%%%%%%%%%%%%%%%%%%%%%%%%%%%%%%%%%%%%%%%%%%

\section{First step}\label{sec:4}

We begin here the proof of Theorems \ref{asymptotic_eigenvalues} and \ref{asymptotic_eigenfunctions}. Accordingly, we fix $j\in\mathbb N$ and we take $\mu_j$, $u_j$, $\varepsilon_{\Omega,j}$,  $\lambda_j(\varepsilon)$, and $u_{j,\varepsilon}$ as in the statements of Theorems \ref{asymptotic_eigenvalues} and \ref{asymptotic_eigenfunctions}.  The  aim of this section is to prove the following intermediate result.
 \begin{proposition}\label{intermediate}
We have
\begin{equation}\label{intermediate.eq1}
\text{$\lambda_j(\varepsilon)=\mu_j+O(\varepsilon)$  as $\varepsilon\to 0$}
\end{equation}
and 
\begin{equation}\label{intermediate.eq2}
\text{$u_{j,\varepsilon}=u_j+O(\varepsilon)$ in $L^2(\Omega)$ as $\varepsilon\to 0$.}
\end{equation}
\end{proposition}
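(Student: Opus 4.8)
The plan is to apply the abstract perturbation Lemma~\ref{lemma_fondamentale} to the resolvent operator $\mathcal{A}_\varepsilon$ and to a suitable "candidate" approximate eigenfunction built from the Steklov eigenfunction $u_j$. The key point is that $\mathcal{A}_\varepsilon$ has eigenvalues $(1+\lambda_k(\varepsilon))^{-1}$ (this follows immediately from comparing \eqref{A_eps} with the weak formulation of \eqref{Neumann}), so controlling the eigenvalues of $\mathcal{A}_\varepsilon$ near $(1+\mu_j)^{-1}$ will give \eqref{intermediate.eq1}, and the second part of Lemma~\ref{lemma_fondamentale}, combined with the simplicity of $\lambda_j(\varepsilon)$ for small $\varepsilon$ (Theorem~\ref{convergence}(ii)), will give \eqref{intermediate.eq2} in the $\|\cdot\|_\varepsilon$-norm, hence in $L^2(\Omega)$ by Proposition~\ref{L2<eps}.

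First I would normalize: take $u_j$ the Steklov eigenfunction with $\int_{\partial\Omega}u_j^2\,d\sigma=1$, and set $U_\varepsilon:=u_j/\|u_j\|_\varepsilon$; since $\|u_j\|_\varepsilon^2=\|\nabla u_j\|_{L^2}^2+\int_\Omega\rho_\varepsilon u_j^2\,dx$ and, by the estimates used in Lemma~\ref{poinc_peso} (i.e.\ the trace-type convergence $\int_\Omega\rho_\varepsilon u_j^2\,dx\to\frac{M}{|\partial\Omega|}\int_{\partial\Omega}u_j^2\,d\sigma=\frac{M}{|\partial\Omega|}\cdot\frac{|\partial\Omega|}{M}\mu_j$ — wait, more precisely it converges to the Steklov Rayleigh-quotient-type quantity), one checks $\|u_j\|_\varepsilon$ stays bounded and bounded away from zero as $\varepsilon\to0$. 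Then I would estimate $\|\mathcal{A}_\varepsilon U_\varepsilon-(1+\mu_j)^{-1}U_\varepsilon\|_\varepsilon$. By definition of $\mathcal{A}_\varepsilon$, for any $\varphi\in\mathcal{H}_\varepsilon(\Omega)$,
\[
\langle \mathcal{A}_\varepsilon U_\varepsilon-(1+\mu_j)^{-1}U_\varepsilon,\varphi\rangle_\varepsilon=\int_\Omega\rho_\varepsilon U_\varepsilon\varphi\,dx-(1+\mu_j)^{-1}\Bigl(\int_\Omega\nabla U_\varepsilon\cdot\nabla\varphi\,dx+\int_\Omega\rho_\varepsilon U_\varepsilon\varphi\,dx\Bigr),
\]
and using that $u_j$ is harmonic with $\partial_\nu u_j=\frac{M}{|\partial\Omega|}\mu_j u_j$ on $\partial\Omega$, so $\int_\Omega\nabla u_j\cdot\nabla\varphi\,dx=\frac{M}{|\partial\Omega|}\mu_j\int_{\partial\Omega}u_j\varphi\,d\sigma$, the right-hand side reduces to something of the form $(1+\mu_j)^{-1}\bigl(\mu_j\int_\Omega\rho_\varepsilon U_\varepsilon\varphi\,dx-\frac{M}{|\partial\Omega|}\mu_j\int_{\partial\Omega}U_\varepsilon\varphi\,d\sigma\bigr)$. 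The factor in brackets is exactly the difference between the "concentrated-mass" bilinear form and its boundary limit applied to $u_j$, which by the quantitative version of the convergence lemmas (the cited \cite[Lemmas 3.1.22, 3.1.28]{phd}, or a direct estimate using the change of variables \eqref{grad2}–\eqref{oe} and the expansion \eqref{asymptotic_rho}) is $O(\varepsilon)\|\varphi\|_{H^1(\Omega)}$, hence $O(\varepsilon)\|\varphi\|_\varepsilon$. Taking the supremum over $\|\varphi\|_\varepsilon\le1$ yields $\|\mathcal{A}_\varepsilon U_\varepsilon-(1+\mu_j)^{-1}U_\varepsilon\|_\varepsilon\le C\varepsilon$.

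Now Lemma~\ref{lemma_fondamentale} with $H=\mathcal{H}_\varepsilon(\Omega)$, $A=\mathcal{A}_\varepsilon$, $u=U_\varepsilon$, $\eta=(1+\mu_j)^{-1}$, $r=C\varepsilon$ produces an eigenvalue $\eta^*_\varepsilon$ of $\mathcal{A}_\varepsilon$ with $|\eta^*_\varepsilon-(1+\mu_j)^{-1}|\le C\varepsilon$; since $\eta^*_\varepsilon=(1+\lambda_{k(\varepsilon)}(\varepsilon))^{-1}$ for some index $k(\varepsilon)$, and since by Theorem~\ref{convergence}(i) the only eigenvalue branch converging to $\mu_j$ is $\lambda_j(\varepsilon)$ (using simplicity of $\mu_j$ and a gap argument: all other $\mu_k$ are at a fixed positive distance from $\mu_j$, so for $\varepsilon$ small $\eta^*_\varepsilon$ must be $(1+\lambda_j(\varepsilon))^{-1}$), we get $|(1+\lambda_j(\varepsilon))^{-1}-(1+\mu_j)^{-1}|\le C\varepsilon$, which rearranges to \eqref{intermediate.eq1}. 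For \eqref{intermediate.eq2}, I would choose $r^*$ of order $\varepsilon$ but with a larger constant, small enough that the segment $[\eta-r^*,\eta+r^*]$ contains only the eigenvalue $(1+\lambda_j(\varepsilon))^{-1}$ (possible for $\varepsilon$ small by the gap argument and simplicity); then the $u^*$ given by the lemma is (a unit multiple of) the normalized eigenfunction, and $\|U_\varepsilon-u^*\|_\varepsilon\le 2r/r^*=O(1)$ — so to get the sharper $O(\varepsilon)$ I would instead apply the lemma with $r^*$ a fixed constant (independent of $\varepsilon$), giving $\|U_\varepsilon-u^*\|_\varepsilon\le 2C\varepsilon/r^*=O(\varepsilon)$ while still isolating $\lambda_j$; then $u^*$ is $\pm$ the $\|\cdot\|_\varepsilon$-normalized eigenfunction associated with $\lambda_j(\varepsilon)$, and one passes from the $\|\cdot\|_\varepsilon$-normalization to the normalization \eqref{uecondition} and from $U_\varepsilon$ back to $u_j$ (both corrections are $O(\varepsilon)$), and finally from $\|\cdot\|_\varepsilon$ to $\|\cdot\|_{L^2(\Omega)}$ via Proposition~\ref{L2<eps}, obtaining \eqref{intermediate.eq2}.

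The main obstacle I expect is the quantitative $O(\varepsilon)$ estimate for the discrepancy $\mu_j\int_\Omega\rho_\varepsilon u_j\varphi\,dx-\frac{M}{|\partial\Omega|}\mu_j\int_{\partial\Omega}u_j\varphi\,d\sigma$ uniformly in $\varphi$ with $\|\varphi\|_{H^1}\le 1$: the pointwise convergence $\rho_\varepsilon\,dx\to\frac{M}{|\partial\Omega|}\,d\sigma|_{\partial\Omega}$ is standard, but extracting the correct rate requires carefully using the curvilinear coordinates $(s,\xi)$, the area element $\varepsilon(1-\varepsilon\xi\kappa(s))$, the expansion \eqref{asymptotic_rho} of $\tilde\rho_\varepsilon$, a one-dimensional trace/Taylor estimate in the $\xi$-variable (writing $\varphi(\psi_\varepsilon(s,\xi))-\varphi(\gamma(s))=\int_0^\xi\partial_\xi(\varphi\circ\psi_\varepsilon)\,d\xi'$ and controlling this by $\sqrt\varepsilon\,\|\nabla\varphi\|_{L^2(\omega_\varepsilon)}$), and then summing the contributions — this is precisely the content encapsulated in the cited lemmas from \cite{phd}, and I would either invoke them directly or reproduce the short argument. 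A secondary technical point is checking uniformity in $\varepsilon$ of all constants (that $\|u_j\|_\varepsilon$ is bounded below, that the spectral gap around $\mu_j$ is uniform), both of which follow from Theorem~\ref{convergence} together with the equivalence of $\|\cdot\|_\varepsilon$ and $\|\cdot\|_{H^1(\Omega)}$ with $\varepsilon$-independent constants on $(0,\varepsilon_\Omega)$ — the latter being a consequence of \eqref{eO} and Proposition~\ref{L2<eps}.
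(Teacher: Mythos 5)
Your overall strategy --- building a quasimode from $u_j$, applying Lemma \ref{lemma_fondamentale} to $\mathcal A_\varepsilon$, identifying the resulting eigenvalue with $\frac{1}{1+\lambda_j(\varepsilon)}$ by a gap argument, and then using the second part of the lemma with a \emph{fixed} $r^*$ plus Proposition \ref{L2<eps} to get the eigenfunction estimate --- is the same as the paper's, and the second half of your argument is sound. The gap is in the key residual estimate: with the uncorrected quasimode $U_\varepsilon=u_j/\norm{u_j}_\varepsilon$ the residual is \emph{not} $O(\varepsilon)\norm{\varphi}_\varepsilon$. As you compute, the pairing $\langle\mathcal A_\varepsilon u_j-\frac{1}{1+\mu_j}u_j,\varphi\rangle_\varepsilon$ reduces to a multiple of
\[
D_\varepsilon(\varphi):=\int_\Omega\rho_\varepsilon u_j\varphi\,dx-\frac{M}{|\partial\Omega|}\int_{\partial\Omega}u_j\varphi\,d\sigma,
\]
whose dominant part is $\frac{M}{|\partial\Omega|}\int_0^{|\partial\Omega|}(u_j\circ\gamma)(s)\int_0^1\bigl[(\varphi\circ\psi_\varepsilon)(s,\xi)-(\varphi\circ\psi_\varepsilon)(s,0)\bigr]\,d\xi\,ds$. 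The one-dimensional Taylor/trace estimate you invoke bounds this only by $C\varepsilon\int_0^{|\partial\Omega|}\int_0^1|\nabla\varphi\circ\psi_\varepsilon|\,d\xi\,ds\le C\varepsilon\cdot\varepsilon^{-1/2}\norm{\nabla\varphi}_{L^2(\omega_\varepsilon)}$, i.e.\ by $O(\sqrt\varepsilon)\norm{\varphi}_\varepsilon$, and this rate is sharp: for $\varphi$ supported in $\omega_\varepsilon$ with $\varphi\circ\psi_\varepsilon(s,\xi)=(u_j\circ\gamma)(s)(1-\xi)$ one finds $D_\varepsilon(\varphi)$ of order $1$ while $\norm{\varphi}_\varepsilon$ is of order $\varepsilon^{-1/2}$. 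So your argument, as written, only yields $\lambda_j(\varepsilon)=\mu_j+O(\sqrt\varepsilon)$ and $u_{j,\varepsilon}=u_j+O(\sqrt\varepsilon)$, which is weaker than the statement.

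This is precisely why the paper's quasimode is $u_j+\varepsilon v_{j,\varepsilon}$ rather than $u_j$ alone: the boundary-layer corrector $v_{j,\varepsilon}=w_j\circ\psi_\varepsilon^{(-1)}$, with $w_j$ as in \eqref{w0}--\eqref{w0probl}, contributes the term $-\frac{\varepsilon}{\mu_j}\int_{\omega_\varepsilon}\nabla v_{j,\varepsilon}\cdot\nabla\varphi\,dx$, whose leading part (after integration by parts in $\xi$, cf.\ \eqref{J5_eq1.1}) equals $-\frac{M}{|\partial\Omega|}\int_0^{|\partial\Omega|}\int_0^1(u_j\circ\gamma)(\varphi\circ\psi_\varepsilon)\,d\xi\,ds+\frac{M}{|\partial\Omega|}\int_{\partial\Omega}u_j\varphi\,d\sigma$ and cancels the offending term exactly, leaving genuine $O(\varepsilon)\norm{\varphi}_\varepsilon$ remainders (the terms $J_{1,\varepsilon},\dots,J_{6,\varepsilon}$ of Lemma \ref{C1}). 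To repair your proof you must insert this corrector, and then also verify (as in Propositions \ref{vjeL2}--\ref{vjee} and Lemma \ref{ultimolemmastep1}) that it perturbs the normalization $\norm{u_j+\varepsilon v_{j,\varepsilon}}_\varepsilon$ only at order $\varepsilon$, so that the final bookkeeping between the $\norm{\cdot}_\varepsilon$-normalized eigenfunction and the normalization \eqref{uecondition} still closes at order $\varepsilon$.
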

In other words, we wish to justify the expansions \eqref{expansion_eigenvalues} and \eqref{expansion_eigenfunctions} up to a remainder of order $\varepsilon$. (We observe here that Theorem \ref{convergence} states the convergence of $\lambda_j(\varepsilon)$ to $\mu_j$, but it does not provide any information on the rate of convergence.)

We introduce the following notation.  We denote by $w_{j}$ the function  from $[0,|\partial\Omega|)\times[0,1]$ to $\mathbb R$ defined by
\begin{equation}\label{w0}
w_{j}(s,\xi):=-\frac{M\mu_j}{2|\partial\Omega|}(u_j\circ\gamma(s))\left(\xi-1\right)^2\quad\forall (s,\xi)\in[0,|\partial\Omega|)\times[0,1]\,.
\end{equation}
By a straightforward computation one verifies that $w_{j}$ solves the following problem
\begin{equation}\label{w0probl}
\left\{\begin{array}{ll}
-\partial^2_{\xi}w_{j}(s,\xi)=\frac{M\mu_j}{|\partial\Omega|}(u_j\circ\gamma(s)), & (s,\xi)\in [0,|\partial\Omega|)\times(0,1),\\
\partial_{\xi}w_{j}(s,0)=\frac{M\mu_j}{|\partial\Omega|}(u_j\circ\gamma(s)), & s\in[0,|\partial\Omega|),\\
w_j(s,1)=\partial_{\xi}w_{j}(s,1)=0, & s\in [0,|\partial\Omega|).
\end{array}
\right.
\end{equation}
Then for all $\varepsilon\in(0,\varepsilon_{\Omega,j})$ we denote by $v_{j,\varepsilon}\in H^1(\Omega)$ the extensions by $0$ of $w_{j}\circ\psi_{\varepsilon}^{(-1)}$ to $\Omega$. We note that by construction $v_{j,\varepsilon}\in H^1(\Omega)$. We also observe that the $L^2(\Omega)$ norm of $v_{j,\varepsilon}$ is in $O(\sqrt\varepsilon)$ as $\varepsilon\to 0$. Indeed, we have the following proposition. 

\begin{proposition}\label{vjeL2} There is a constant $C>0$ such that $\|v_{j,\varepsilon}\|_{L^2(\Omega)}\le C\sqrt\varepsilon$ for all $\varepsilon\in(0,\varepsilon_{\Omega,j})$.
\end{proposition}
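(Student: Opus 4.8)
The plan is to compute the $L^2(\Omega)$ norm of $v_{j,\varepsilon}$ directly by passing to the curvilinear coordinates $(s,\xi)$ via the diffeomorphism $\psi_\varepsilon$ introduced in Section \ref{sec:2}, and to exploit the explicit form \eqref{w0} of $w_j$. Since $v_{j,\varepsilon}$ is supported in $\omega_\varepsilon$ and equals $w_j\circ\psi_\varepsilon^{(-1)}$ there, the change of variables formula together with the expression $|\det D\psi_\varepsilon|=\varepsilon(1-\varepsilon\xi\kappa(s))$ (valid for $\varepsilon\in(0,\varepsilon''_\Omega)$, hence for $\varepsilon\in(0,\varepsilon_{\Omega,j})$) gives
\begin{equation*}
\|v_{j,\varepsilon}\|_{L^2(\Omega)}^2=\int_{\omega_\varepsilon}|v_{j,\varepsilon}|^2\,dx=\varepsilon\int_0^{|\partial\Omega|}\int_0^1 |w_j(s,\xi)|^2\,(1-\varepsilon\xi\kappa(s))\,d\xi\,ds.
\end{equation*}
So the essential point is already visible: the Jacobian carries a factor $\varepsilon$, which immediately produces the claimed $O(\sqrt\varepsilon)$ bound once the remaining integral is shown to be bounded uniformly in $\varepsilon$.

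First I would record that $1-\varepsilon\xi\kappa(s)$ is bounded above on $[0,|\partial\Omega|)\times(0,1)$ uniformly in $\varepsilon\in(0,\varepsilon''_\Omega)$ — indeed $|1-\varepsilon\xi\kappa(s)|\le 1+\varepsilon''_\Omega\sup_s|\kappa(s)|\le 2$, say — so that
\begin{equation*}
\|v_{j,\varepsilon}\|_{L^2(\Omega)}^2\le 2\varepsilon\int_0^{|\partial\Omega|}\int_0^1 |w_j(s,\xi)|^2\,d\xi\,ds.
\end{equation*}
Then, using \eqref{w0}, we have $|w_j(s,\xi)|^2=\frac{M^2\mu_j^2}{4|\partial\Omega|^2}(u_j\circ\gamma(s))^2(\xi-1)^4$, and since $\int_0^1(\xi-1)^4\,d\xi=\tfrac15$ and $\int_0^{|\partial\Omega|}(u_j\circ\gamma(s))^2\,ds=\int_{\partial\Omega}u_j^2\,d\sigma=1$ by \eqref{ucondition}, the double integral equals $\frac{M^2\mu_j^2}{20|\partial\Omega|^2}$, a finite constant independent of $\varepsilon$. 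Collecting the estimates yields $\|v_{j,\varepsilon}\|_{L^2(\Omega)}\le C\sqrt\varepsilon$ with, e.g., $C=\bigl(\frac{M^2\mu_j^2}{10|\partial\Omega|^2}\bigr)^{1/2}$, which is the assertion.

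There is no real obstacle here; the only points requiring a word of care are that the change of variables is legitimate on $\omega_\varepsilon$ (this is exactly the content of the diffeomorphism property of $\psi_\varepsilon$ recalled in Section \ref{sec:2}, and $\varepsilon_{\Omega,j}<\varepsilon_\Omega\le\varepsilon''_\Omega$ so the Jacobian formula applies), that $w_j\circ\psi_\varepsilon^{(-1)}$ lies in $H^1(\omega_\varepsilon)$ with trace agreeing with $0$ on $\partial\omega_\varepsilon\setminus\partial\Omega$ — guaranteed by $w_j(s,1)=0$ — so that its extension by $0$ is indeed in $H^1(\Omega)$ (as already noted after \eqref{w0probl}), and that $u_j\circ\gamma\in L^2(0,|\partial\Omega|)$, which is immediate since $u_j\in H^1(\Omega)$ has a well-defined $L^2$ trace on $\partial\Omega$. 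Thus the proof is a short explicit computation, and the factor $\sqrt\varepsilon$ originates entirely from the width $\varepsilon$ of the strip $\omega_\varepsilon$, i.e., from the $\varepsilon$ in the Jacobian $|\det D\psi_\varepsilon|$.
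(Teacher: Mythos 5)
Your proof is correct and follows essentially the same route as the paper: pass to the coordinates $(s,\xi)$ via $\psi_\varepsilon$, extract the factor $\varepsilon$ from the Jacobian $|\det D\psi_\varepsilon|=\varepsilon(1-\varepsilon\xi\kappa(s))$, bound $1-\varepsilon\xi\kappa(s)$ uniformly, and control the remaining integral by the ($\varepsilon$-independent) $L^2$ norm of $w_j$. The only difference is that you evaluate $\norm{w_j}_{L^2([0,|\partial\Omega|)\times(0,1))}$ explicitly from \eqref{w0} and \eqref{ucondition}, whereas the paper simply observes that it is a finite constant; this changes nothing of substance.
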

\begin{proof}
Since $v_{j,\varepsilon}$ is  the extensions by $0$ of $w_{j}\circ\psi_{\varepsilon}^{(-1)}$ to $\Omega$, by the rule of change of variables in integrals we have
\[
\begin{split}
\int_{\omega_{\varepsilon}}v_{j,\varepsilon}^2 dx&=\varepsilon\int_0^{|\partial\Omega|}\int_0^1w^2_j(s,\xi)(1-\varepsilon\xi\kappa(s))\,d\xi ds\\
&\le \left(\norm{w_j}_{L^2([0,\partial\Omega)\times(0,1))}\sup_{(s,\xi)\in[0,\partial\Omega)\times(0,\varepsilon)}|1-\varepsilon\xi\kappa(s)|\right)\varepsilon\,.
\end{split}
\]
\end{proof}

We also observe that $\sqrt\varepsilon\norm{v_{j,\varepsilon}}_\varepsilon$ is uniformly bounded for $\varepsilon\in(0,\varepsilon_{\Omega,j})$. Namely, we have the following proposition. 

\begin{proposition}\label{vjee} There is a constant $C>0$ such that $\sqrt\varepsilon\norm{v_{j,\varepsilon}}_\varepsilon\le C$ for all $\varepsilon\in(0,\varepsilon_{\Omega,j})$.
\end{proposition}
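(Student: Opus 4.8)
The plan is to estimate $\norm{v_{j,\varepsilon}}_\varepsilon^2 = \norm{\nabla v_{j,\varepsilon}}_{L^2(\Omega)}^2 + \int_\Omega \rho_\varepsilon v_{j,\varepsilon}^2\,dx$ by passing to the curvilinear coordinates $(s,\xi)$ on the strip $\omega_\varepsilon$, where $v_{j,\varepsilon}$ coincides with $w_j \circ \psi_\varepsilon^{(-1)}$ (and vanishes elsewhere). The key point is that the gradient formula \eqref{grad2} carries a factor $\varepsilon^{-2}$ in front of the $\partial_\xi$-derivative, which is exactly compensated by the factor $\varepsilon$ coming from $|\det D\psi_\varepsilon| = \varepsilon(1-\varepsilon\xi\kappa(s))$ in the change of variables, leaving an overall $\varepsilon^{-1}$ — hence $\sqrt\varepsilon\,\norm{v_{j,\varepsilon}}_\varepsilon = O(1)$.

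First I would write, using the change of variables $x = \psi_\varepsilon(s,\xi)$ together with \eqref{grad2},
\[
\int_\Omega |\nabla v_{j,\varepsilon}|^2\,dx = \int_0^{|\partial\Omega|}\!\!\int_0^1\left(\frac{1}{\varepsilon^2}(\partial_\xi w_j)^2 + \frac{(\partial_s w_j)^2}{(1-\varepsilon\xi\kappa(s))^2}\right)\varepsilon(1-\varepsilon\xi\kappa(s))\,d\xi\,ds.
\]
The first summand contributes $\tfrac{1}{\varepsilon}\int_0^{|\partial\Omega|}\!\int_0^1 (\partial_\xi w_j)^2(1-\varepsilon\xi\kappa(s))\,d\xi\,ds$, which is $\le \tfrac{C}{\varepsilon}$ since $w_j$ is smooth (being a product of $u_j\circ\gamma$, which is bounded on the compact $\partial\Omega$, with the polynomial $(\xi-1)^2$) and $1-\varepsilon\xi\kappa(s)$ is bounded by \eqref{positiveinf}. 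The second summand contributes $\varepsilon\int_0^{|\partial\Omega|}\!\int_0^1 (\partial_s w_j)^2 (1-\varepsilon\xi\kappa(s))^{-1}\,d\xi\,ds$, which is $\le C\varepsilon$ — here I need $\partial_s(u_j\circ\gamma)$ to be bounded, which holds because $u_j\in H^1(\Omega)$ is harmonic in $\Omega$ and hence smooth up to $\partial\Omega$ (the boundary being $C^3$), so $u_j\circ\gamma \in C^1([0,|\partial\Omega|])$; and $(1-\varepsilon\xi\kappa(s))^{-1}$ is bounded by \eqref{positiveinf}. Therefore $\norm{\nabla v_{j,\varepsilon}}_{L^2(\Omega)}^2 \le C/\varepsilon$.

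Next, for the zero-order term, $\int_\Omega \rho_\varepsilon v_{j,\varepsilon}^2\,dx = \int_{\omega_\varepsilon}\rho_\varepsilon v_{j,\varepsilon}^2\,dx$, and on $\omega_\varepsilon$ we have $\rho_\varepsilon = (M-\varepsilon|\Omega\setminus\overline\omega_\varepsilon|)/|\omega_\varepsilon|$, which by \eqref{oe} behaves like $M/(\varepsilon|\partial\Omega|) = O(\varepsilon^{-1})$; combined with the bound $\int_{\omega_\varepsilon}v_{j,\varepsilon}^2\,dx \le C\varepsilon$ already established in Proposition \ref{vjeL2}, this gives $\int_\Omega \rho_\varepsilon v_{j,\varepsilon}^2\,dx \le C$. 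Adding the two contributions yields $\norm{v_{j,\varepsilon}}_\varepsilon^2 \le C/\varepsilon$, i.e. $\sqrt\varepsilon\,\norm{v_{j,\varepsilon}}_\varepsilon \le C$, as claimed. The only mild care needed is to make the constants uniform in $\varepsilon\in(0,\varepsilon_{\Omega,j})$, which is automatic since all the $\varepsilon$-dependent factors appearing ($1-\varepsilon\xi\kappa(s)$ and its reciprocal, $\rho_\varepsilon/\varepsilon^{-1}$) are uniformly bounded above and below on that interval by the choices made in \eqref{positiveinf}, \eqref{oe}, and \eqref{eO}; there is no real obstacle, only bookkeeping.
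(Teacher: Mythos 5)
Your proof is correct and follows essentially the same route as the paper's: the same change of variables via \eqref{grad2} with the $\varepsilon^{-2}$ versus $\varepsilon$ cancellation for the gradient term, the bound $\rho_\varepsilon=O(\varepsilon^{-1})$ on $\omega_\varepsilon$ combined with Proposition \ref{vjeL2} for the zero-order term, and elliptic regularity of $u_j$ up to the boundary to control $\partial_s w_j$. The only cosmetic difference is that the paper phrases the density bound through the decomposition $\rho_\varepsilon=\varepsilon+\frac{1}{\varepsilon}\tilde\rho_\varepsilon$ and \eqref{asymptotic_rho} rather than directly through \eqref{oe}, which is equivalent.
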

\begin{proof} 
We have 
\begin{equation}\label{vjee.eq1}
\norm{v_{j,\varepsilon}}_\varepsilon=\int_{\omega_\varepsilon}\rho_\varepsilon\,v_{j,\varepsilon}^2dx+\int_{\omega_\varepsilon}|\nabla v_{j,\varepsilon}|^2dx\,.
\end{equation}
Since $\rho_\varepsilon = \varepsilon+\frac{1}{\varepsilon}\tilde\rho_\varepsilon$ on $\omega_\varepsilon$ we have
\[
\int_{\omega_\varepsilon}\rho_\varepsilon\,v_{j,\varepsilon}^2dx= \left(\varepsilon+\frac{1}{\varepsilon}\tilde\rho(\varepsilon)\right)\norm{v_{j,\varepsilon}}^2_{L^2(\Omega)}\,.
\]
Thus, by Proposition \ref{vjeL2} and by \eqref{asymptotic_rho} we deduce that 
\begin{equation}\label{vjee.eq2}
\int_{\omega_\varepsilon}\rho_\varepsilon\,v_{j,\varepsilon}^2dx\le C\qquad\forall\varepsilon\in(0,\varepsilon_{\Omega,j})
\end{equation}
for some $C>0$. By \eqref{grad2} and by the rule of change of variables in integrals we have
\[
\begin{split}
&\int_{\omega_\varepsilon}|\nabla v_{j,\varepsilon}|^2dx\\
&\qquad=\int_0^{|\partial\Omega|}\int_0^1\left(\frac{1}{\varepsilon^2}(\partial_{\xi}w_j(s,\xi))^2+\frac{(\partial_s w_j(s,\xi))^2}{(1-\varepsilon\xi\kappa(s))^2}\right)\varepsilon(1-\varepsilon\xi\kappa(s))\,d\xi ds\\
&\qquad=\frac{1}{\varepsilon} \int_0^{|\partial\Omega|} \int_0^1(\partial_{\xi}w_j(s,\xi))^2 (1-\varepsilon\xi\kappa(s))\,d\xi ds +\varepsilon\int_0^{|\partial\Omega|}\int_0^1 \frac{(\partial_s w_j(s,\xi))^2}{1-\varepsilon\xi\kappa(s)}\,d\xi ds.
\end{split}
\]
From \eqref{w0} we observe that
\begin{equation}\label{regu0}
|\partial_{\xi}w_j(s,\xi)|=\frac{M\mu_j}{|\partial\Omega|}(1-\xi)|u_j\circ\gamma(s)|
\end{equation}
and
\begin{equation}\label{regu1}
|\partial_{s}w_j(s,\xi)|=\frac{M\mu_j}{2|\partial\Omega|}(\xi-1)^2|\partial_s (u_j\circ\gamma)(s)|.
\end{equation}
Since $\Omega$ is assumed to be of class $C^3$, a classical elliptic regularity argument shows that $u_j\in C^2(\overline\Omega)$ (see e.g., Agmon {\it et al.}~\cite{agmon1}). In addition, by the regularity of $\Omega$, we have that $\gamma$ is of class $C^3$ from $[0,|\partial\Omega|)$ to $\mathbb R^2$. Thus, from \eqref{regu0} and \eqref{regu1} it follows that $|\partial_{\xi}w_j(s,\xi)|$, $|\partial_{s}w_j(s,\xi)|\leq C\|u_j\|_{C^1(\overline\Omega)}$. Then by condition \eqref{positiveinf} we verify that 
\begin{equation}\label{vjee.eq3}
\int_{\omega_\varepsilon}|\nabla v_{j,\varepsilon}|^2dx\le C\frac{1}{\varepsilon}\qquad\forall\varepsilon\in(0,\varepsilon_{\Omega,j})
\end{equation}
Now, by \eqref{vjee.eq1}, \eqref{vjee.eq2}, and \eqref{vjee.eq3} we deduce the validity of the proposition.
\end{proof}

We now consider the operator $\mathcal A_{\varepsilon}$ introduced in Section \ref{sec:2}. We recall that $\mathcal A_{\varepsilon}$ is a  compact self-adjoint operator  from $\mathcal H_{\varepsilon}(\Omega)$ to itself. In addition, $\lambda_j(\varepsilon)$ is an eigenvalue of \eqref{Neumann} if and only if $\frac{1}{1+\lambda_j(\varepsilon)}$ is an eigenvalue of $\mathcal A_{\varepsilon}$ and Theorem \ref{convergence} implies that 
\[
\lim_{\varepsilon\to 0}\frac{1}{1+\lambda_{j}(\varepsilon)}=\frac{1}{1+\mu_j}\,.
\]
Since $\mu_j$ is a simple eigenvalue of \eqref{Neumann}, we can prove that $\frac{1}{1+\lambda_j(\varepsilon)}$ is also simple for $\varepsilon$ small enough and we have the following Lemma \ref{only}.

\begin{lemma}\label{only} There exist $\delta_j\in(0,\varepsilon_{\Omega,j})$ and $r^*_j>0$ such  that,  for all $\varepsilon\in(0,\delta_j)$ the only  eigenvalue of $\mathcal A_{\varepsilon}$ in the interval 
\[
\left[\frac{1}{1+{\mu_j}}-r^*_j,\frac{1}{1+{\mu_j}}+r^*_j\right]
\]
is $\frac{1}{1+\lambda_{j}(\varepsilon)}$.  
\end{lemma}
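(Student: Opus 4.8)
The plan is to read off the spectrum of $\mathcal A_\varepsilon$ in terms of the $\lambda_k(\varepsilon)$'s and then to separate the eigenvalue $(1+\lambda_j(\varepsilon))^{-1}$ from the rest of the spectrum \emph{uniformly in $\varepsilon$}, using the convergence of Theorem \ref{convergence}(i) for the three indices $j-1,j,j+1$ together with the monotonicity in $k$ of the sequence $\lambda_k(\varepsilon)$.

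First I would record that, for every $\varepsilon\in(0,\varepsilon_\Omega)$, the spectrum of $\mathcal A_\varepsilon$ equals $\{(1+\lambda_k(\varepsilon))^{-1}:k\in\mathbb N\}$, counted with multiplicity. Indeed, comparing \eqref{A_eps} with the weak formulation of \eqref{Neumann} one checks that, for $\eta\neq 0$ and $u\neq 0$, $\mathcal A_\varepsilon u=\eta u$ if and only if $u$ is a weak eigenfunction of \eqref{Neumann} with eigenvalue $\eta^{-1}-1$; moreover $\mathcal A_\varepsilon$ is injective, since $\mathcal A_\varepsilon f=0$ forces $\int_\Omega\rho_\varepsilon f\varphi\,dx=0$ for all $\varphi\in\mathcal H_\varepsilon(\Omega)$ and hence $f=0$ because $\rho_\varepsilon>0$, so that $0\notin\sigma(\mathcal A_\varepsilon)$. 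In particular $t\mapsto(1+t)^{-1}$ being decreasing on $[0,\infty)$, the eigenvalues of $\mathcal A_\varepsilon$ are decreasing in $k$.

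Since $\mu_j$ is a simple eigenvalue of \eqref{Steklov} we have $\mu_{j-1}<\mu_j<\mu_{j+1}$ (if $j=0$ only $\mu_0<\mu_1$ is used and the part of the argument concerning indices $k\le j-1$ is vacuous). I would then \emph{choose} $r^*_j>0$ so small that
\[
(1+\mu_{j+1})^{-1}<(1+\mu_j)^{-1}-r^*_j<(1+\mu_j)^{-1}+r^*_j<(1+\mu_{j-1})^{-1},
\]
for instance $r^*_j:=\tfrac13\min\bigl\{(1+\mu_{j-1})^{-1}-(1+\mu_j)^{-1},\,(1+\mu_j)^{-1}-(1+\mu_{j+1})^{-1}\bigr\}$. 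By Theorem \ref{convergence}(i) applied to $j-1,j,j+1$ there is $\delta_j\in(0,\varepsilon_{\Omega,j})$ such that, for every $\varepsilon\in(0,\delta_j)$,
\[
(1+\lambda_{j+1}(\varepsilon))^{-1}<(1+\mu_j)^{-1}-r^*_j<(1+\lambda_j(\varepsilon))^{-1}<(1+\mu_j)^{-1}+r^*_j<(1+\lambda_{j-1}(\varepsilon))^{-1}.
\]

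Finally I would dispose of all the remaining indices by monotonicity: for $k\le j-1$ one has $\lambda_k(\varepsilon)\le\lambda_{j-1}(\varepsilon)$, hence $(1+\lambda_k(\varepsilon))^{-1}\ge(1+\lambda_{j-1}(\varepsilon))^{-1}>(1+\mu_j)^{-1}+r^*_j$, and for $k\ge j+1$ one has $(1+\lambda_k(\varepsilon))^{-1}\le(1+\lambda_{j+1}(\varepsilon))^{-1}<(1+\mu_j)^{-1}-r^*_j$; in either case $(1+\lambda_k(\varepsilon))^{-1}$ lies outside the interval $[(1+\mu_j)^{-1}-r^*_j,(1+\mu_j)^{-1}+r^*_j]$. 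Hence the only eigenvalue of $\mathcal A_\varepsilon$ in that interval is $(1+\lambda_j(\varepsilon))^{-1}$, which is moreover simple by Theorem \ref{convergence}(ii) and the choice of $\varepsilon_{\Omega,j}$. There is no real obstacle here; the only point to watch is that the separation of the interval from the rest of $\sigma(\mathcal A_\varepsilon)$ must be \emph{uniform} in $\varepsilon$, and this is exactly what the monotonicity $k\mapsto\lambda_k(\varepsilon)$ provides once the finitely many convergences $\lambda_{j\pm1}(\varepsilon)\to\mu_{j\pm1}$ have been used.
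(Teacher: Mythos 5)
Your proof is correct and follows essentially the same route as the paper's: the simplicity of $\mu_j$ gives the gaps $\mu_{j-1}<\mu_j<\mu_{j+1}$, Theorem \ref{convergence} (i) applied to the indices $j-1$, $j$, $j+1$ pins down the three nearby eigenvalues of $\mathcal A_\varepsilon$ for small $\varepsilon$, and the monotonicity of $k\mapsto\lambda_k(\varepsilon)$ disposes of the remaining ones. The paper's version is terser -- it compresses your explicit choice of $r^*_j$ and the monotonicity step into ``a standard continuity argument'' and takes the identification of the spectrum of $\mathcal A_\varepsilon$ with $\{(1+\lambda_k(\varepsilon))^{-1}\}$ for granted from the preceding discussion -- but the argument is the same.
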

\proof
Since $\mu_j$ and $\lambda_j(\varepsilon)$ are simple we have  $\mu_j\neq\mu_{j-1}$, $\mu_j\neq\mu_{j+1}$, $\lambda_j(\varepsilon)\ne\lambda_{j-1}(\varepsilon)$ and $\lambda_j(\varepsilon)\ne\lambda_{j+1}(\varepsilon)$ for all $\varepsilon\in(0,\varepsilon_{\Omega,j})$. Then, by Theorem \ref{convergence} (i) and by a standard continuity argument we can find  $\delta_j\in(0,\varepsilon_{\Omega,j})$ and $r^*_j>0$ such that 
\[
\left|\frac{1}{1+\mu_j}-\frac{1}{1+\lambda_{j-1}(\varepsilon)}\right|>r^*_j\,,\quad\left|\frac{1}{1+\mu_j}-\frac{1}{1+\lambda_{j+1}(\varepsilon)}\right|>r^*_j\,,
\]
and 
\[
\left|\frac{1}{1+\mu_j}-\frac{1}{1+\lambda_{j}(\varepsilon)}\right|\leq r^*_j
\] for all $\varepsilon\in(0,\delta_j)$. 
\qed

\medskip

To prove Proposition \ref{intermediate} we plan to apply Lemma \ref{lemma_fondamentale} to $\mathcal A_{\varepsilon}$ with $H=\mathcal H_{\varepsilon}(\Omega)$, $\eta=\frac{1}{1+\mu_j}$, $u=\frac{u_j+\varepsilon v_{j,\varepsilon}}{\|u_j+\varepsilon v_{j,\varepsilon}\|_{\varepsilon}}$, and $r=C\varepsilon <r^*_j$, where $C>0$ is a constant which does not depend on $\varepsilon$. Accordingly, we have to verify that the assumptions of Lemma \ref{lemma_fondamentale} are satisfied.  

As a first step, we prove the following
\begin{lemma}\label{C1}
There exists a constant $C_1>0$ such that 
\begin{equation}\label{condition_1}
\left|\left\langle\mathcal A_{\varepsilon}(u_j+\varepsilon v_{j,\varepsilon})-\frac{1}{1+\mu_j}(u_j+\varepsilon v_{j,\varepsilon}),\varphi\right\rangle_{\varepsilon}\right|\leq C_1\varepsilon \|\varphi\|_{\varepsilon}
\end{equation}
for all $\varphi\in\mathcal H_{\varepsilon}(\Omega)$ and for all $\varepsilon\in(0,\varepsilon_{\Omega,j})$.
\end{lemma}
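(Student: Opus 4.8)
The plan is to expand the inner product on the left-hand side of \eqref{condition_1} using the defining identity $\langle \mathcal A_\varepsilon f,\varphi\rangle_\varepsilon=\int_\Omega\rho_\varepsilon f\varphi\,dx$ from Proposition \ref{Ae}, so that
\[
\left\langle\mathcal A_{\varepsilon}(u_j+\varepsilon v_{j,\varepsilon})-\tfrac{1}{1+\mu_j}(u_j+\varepsilon v_{j,\varepsilon}),\varphi\right\rangle_{\varepsilon}
=\int_\Omega\rho_\varepsilon(u_j+\varepsilon v_{j,\varepsilon})\varphi\,dx-\tfrac{1}{1+\mu_j}\langle u_j+\varepsilon v_{j,\varepsilon},\varphi\rangle_\varepsilon.
\]
Recalling the definition of $\langle\cdot,\cdot\rangle_\varepsilon$ in \eqref{bilinear_eps}, the second term equals $\tfrac{1}{1+\mu_j}\big(\int_\Omega\nabla(u_j+\varepsilon v_{j,\varepsilon})\cdot\nabla\varphi\,dx+\int_\Omega\rho_\varepsilon(u_j+\varepsilon v_{j,\varepsilon})\varphi\,dx\big)$. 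Multiplying through by $(1+\mu_j)$ (a harmless fixed constant), it suffices to estimate
\[
R_\varepsilon(\varphi):=\mu_j\int_\Omega\rho_\varepsilon(u_j+\varepsilon v_{j,\varepsilon})\varphi\,dx-\int_\Omega\nabla(u_j+\varepsilon v_{j,\varepsilon})\cdot\nabla\varphi\,dx
\]
and show $|R_\varepsilon(\varphi)|\le C\varepsilon\|\varphi\|_\varepsilon$. The idea is that $u_j+\varepsilon v_{j,\varepsilon}$ is an approximate eigenfunction of \eqref{Neumann} with eigenvalue $\mu_j$: $v_{j,\varepsilon}$ is precisely the boundary-layer corrector designed so that the singular $\varepsilon^{-2}\partial_\xi^2$ contributions from $\nabla v_{j,\varepsilon}$ in the strip cancel, up to lower order, the concentrated-mass term $\rho_\varepsilon u_j$ there.

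The key steps, in order. First, split $R_\varepsilon(\varphi)$ into a bulk contribution over $\Omega\setminus\overline\omega_\varepsilon$ and a strip contribution over $\omega_\varepsilon$. On $\Omega\setminus\overline\omega_\varepsilon$ we have $v_{j,\varepsilon}\equiv 0$ and $\rho_\varepsilon=\varepsilon$, so the bulk part is $\mu_j\varepsilon\int_{\Omega\setminus\overline\omega_\varepsilon}u_j\varphi\,dx-\int_{\Omega\setminus\overline\omega_\varepsilon}\nabla u_j\cdot\nabla\varphi\,dx$; since $u_j$ is harmonic in $\Omega$ (it solves \eqref{Steklov}), an integration by parts rewrites $\int_{\Omega\setminus\overline\omega_\varepsilon}\nabla u_j\cdot\nabla\varphi\,dx$ as a boundary integral over $\partial\omega_\varepsilon\cap\Omega$ of $\partial_\nu u_j\,\varphi$ (the outer boundary term vanishes by $\partial_\nu u_j=\tfrac{M}{|\partial\Omega|}\mu_j u_j$ on $\partial\Omega$ — wait, more carefully, one keeps that term and combines it below), and the remaining area integral is $O(\varepsilon)\|\varphi\|_{L^2(\Omega)}$ by Proposition \ref{L2<eps}. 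Second, treat the strip integral: pass to the $(s,\xi)$ coordinates via $\psi_\varepsilon$, use $|\det D\psi_\varepsilon|=\varepsilon(1-\varepsilon\xi\kappa(s))$ and the gradient formula \eqref{grad2}. On $\omega_\varepsilon$ one has $\rho_\varepsilon=\varepsilon+\tfrac1\varepsilon\tilde\rho_\varepsilon$; the dangerous term is $\tfrac1\varepsilon\tilde\rho_\varepsilon$ against $u_j$, which after the change of variables carries a factor $\tfrac1\varepsilon\cdot\varepsilon=1$. The gradient term $\int_{\omega_\varepsilon}\nabla v_{j,\varepsilon}\cdot\nabla\varphi\,dx$, via \eqref{grad2}, produces a leading $\tfrac1\varepsilon\int\int\partial_\xi w_j\,\partial_\xi(\varphi\circ\psi_\varepsilon)\,(1-\varepsilon\xi\kappa)\,d\xi ds$; integrate by parts in $\xi$ and use the ODE/boundary conditions \eqref{w0probl} satisfied by $w_j$ — the bulk term $-\partial_\xi^2 w_j=\tfrac{M\mu_j}{|\partial\Omega|}u_j\circ\gamma$ matches $\tfrac{M\mu_j}{|\partial\Omega|}u_j$, which is $\mu_j\tilde\rho_\varepsilon u_j$ up to $O(\varepsilon)$ by \eqref{asymptotic_rho}, and the boundary term at $\xi=0$, namely $\partial_\xi w_j(s,0)=\tfrac{M\mu_j}{|\partial\Omega|}u_j\circ\gamma$, matches (up to sign and the $\varepsilon$-dependent geometric factor) the flux term $-\partial_\nu u_j\,\varphi$ on $\partial\omega_\varepsilon\cap\Omega$ left over from the bulk integration by parts, while the $\xi=1$ term vanishes since $w_j(s,1)=\partial_\xi w_j(s,1)=0$ and $v_{j,\varepsilon}$ glues to $0$ there. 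Third, collect all remainders: each is of the form (bounded kernel)$\times\varepsilon\times(\text{an }L^2\text{ or }H^1\text{ norm of }\varphi)$, controlled by $C\varepsilon\|\varphi\|_\varepsilon$ using Proposition \ref{L2<eps}, the $C^2(\overline\Omega)$ regularity of $u_j$, the $C^3$ regularity of $\gamma$, the explicit bounds \eqref{regu0}–\eqref{regu1} on the derivatives of $w_j$, and \eqref{positiveinf} to bound $(1-\varepsilon\xi\kappa)^{\pm1}$ uniformly; the trace inequality $\|\varphi\|_{L^2(\partial\Omega)}\le C\|\varphi\|_{H^1(\Omega)}\le C\|\varphi\|_\varepsilon$ handles the surviving boundary integral over $\partial\Omega$.

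The main obstacle will be the bookkeeping in the strip: one must track several $\varepsilon$-powers simultaneously — the $\varepsilon^{-2}$ from \eqref{grad2} against $|\det D\psi_\varepsilon|=\varepsilon(1-\varepsilon\xi\kappa)$, the $\varepsilon^{-1}$ in $\rho_\varepsilon$ on $\omega_\varepsilon$ against the expansion \eqref{asymptotic_rho} of $\tilde\rho_\varepsilon$, and the $O(\varepsilon)$ gap between $\tfrac{M}{|\partial\Omega|}$ and $\tilde\rho_\varepsilon$ — and verify that the only terms of size $\varepsilon^{-1}$ or $\varepsilon^0$ are exactly the ones cancelled by the ODE and boundary conditions in \eqref{w0probl}, so that what remains is genuinely $O(\varepsilon)$. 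A secondary subtlety is that $\varphi$ has only $H^1$ regularity, so no integration by parts in $s$ or further use of $\partial_s^2$ is available on the $\varphi$ side; all estimates must stay at the level of first derivatives of $\varphi$, which is why one integrates by parts only in $\xi$ and only moves derivatives onto the smooth functions $w_j$ and $u_j$.
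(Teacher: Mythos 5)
Your overall architecture coincides with the paper's: expand via $\langle\mathcal A_\varepsilon f,\varphi\rangle_\varepsilon=\int_\Omega\rho_\varepsilon f\varphi\,dx$, reduce to the approximate-eigenfunction residual $R_\varepsilon(\varphi)$, pass to the $(s,\xi)$ coordinates in the strip, integrate by parts in $\xi$ against \eqref{w0probl}, and bound the remainders using H\"older, \eqref{positiveinf}, \eqref{asymptotic_rho} and Proposition \ref{L2<eps}. You have also correctly identified the two leading-order cancellations that make the lemma work: the ODE $-\partial_\xi^2w_j=\frac{M\mu_j}{|\partial\Omega|}u_j\circ\gamma$ against the concentrated-mass term, and the Neumann datum of $w_j$ against the Steklov flux.

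However, the way you propose to realize the second cancellation contains a genuine error. The boundary term at $\xi=0$ produced by integrating $\partial_\xi w_j\,\partial_\xi(\varphi\circ\psi_\varepsilon)$ by parts lives on $\partial\Omega$ (since $\psi_\varepsilon(s,0)=\gamma(s)$), not on $\partial\omega_\varepsilon\cap\Omega$, which corresponds to $\xi=1$ and where $w_j$ and $\partial_\xi w_j$ vanish. So this term cannot cancel the flux $\int_{\partial\omega_\varepsilon\cap\Omega}\partial_n u_j\,\varphi\,d\sigma$ that your bulk integration by parts creates on the inner strip boundary: the two integrals are over different curves, and trading one for the other costs exactly $\int_{\omega_\varepsilon}\nabla u_j\cdot\nabla\varphi\,dx$, which by Cauchy--Schwarz is only $O(\sqrt\varepsilon)\,\|\varphi\|_\varepsilon$ --- not enough for \eqref{condition_1}. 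Your sketch moreover never accounts for this strip contribution $\int_{\omega_\varepsilon}\nabla u_j\cdot\nabla\varphi\,dx$ at all. The clean fix, and what the paper does, is not to split $\Omega$: the weak formulation of \eqref{Steklov} gives the exact identity $\int_\Omega\nabla u_j\cdot\nabla\varphi\,dx=\frac{M\mu_j}{|\partial\Omega|}\int_{\partial\Omega}u_j\varphi\,d\sigma$, and this $\partial\Omega$ integral is cancelled exactly by the $\xi=0$ boundary term; the genuinely $O(\varepsilon)$ leftovers, where $u_j\circ\psi_\varepsilon(s,\xi)$ must be compared with $u_j\circ\gamma(s)$, are handled by a mean-value expansion in the normal variable, which supplies the extra factor of $t$ needed to upgrade a naive $O(\sqrt\varepsilon)$ bound to $O(\varepsilon)$ (this is the paper's estimate of the term $J_{8,\varepsilon}$). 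One further small point: several strip remainders carry an $\varepsilon^{-1}$ weight, and to bound $\frac1\varepsilon\int_{\omega_\varepsilon}\varphi^2\,dx$ by $C\|\varphi\|_\varepsilon^2$ you must use the $\rho_\varepsilon$-weighted part of the norm (via $\rho_\varepsilon\ge\tilde\rho_\varepsilon/\varepsilon$ on $\omega_\varepsilon$ and \eqref{eO}), not Proposition \ref{L2<eps}, which would again lose half a power of $\varepsilon$.
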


\proof By \eqref{bilinear_eps} and \eqref{A_eps} we have
\begin{equation}\label{mod1_lem}
\begin{split}
&\left|\left\langle\mathcal A_{\varepsilon}(u_j+\varepsilon v_{j,\varepsilon})-\frac{1}{1+\mu_j}(u_j+\varepsilon v_{j,\varepsilon}),\varphi\right\rangle_{\varepsilon}\right|\\
&\quad=\left|\int_{\Omega}\rho_{\varepsilon} u_j\varphi dx+\int_{\omega_{\varepsilon}}\varepsilon\rho_{\varepsilon}v_{j,\varepsilon}\varphi dx
-\frac{1}{1+\mu_j}\left(\int_{\Omega}\nabla u_j\cdot\nabla\varphi dx+\int_{\Omega}\rho_{\varepsilon}u_j\varphi dx\right.\right.\\
&\qquad\qquad\qquad\qquad\qquad\qquad\qquad\qquad+\left.\left.\int_{\omega_{\varepsilon}}\varepsilon\nabla v_{j,\varepsilon}\cdot\nabla\varphi dx+\int_{\omega_{\varepsilon}}\varepsilon\rho_{\varepsilon}v_{j,\varepsilon} \varphi dx\right)\right|\\
&\quad=\frac{\mu_j}{1+\mu_j}\left|\varepsilon\int_{\Omega}u_j \varphi dx+\int_{\omega_{\varepsilon}}\frac{1}{\varepsilon}\tilde\rho_{\varepsilon}u_j\varphi dx-\frac{M}{|\partial\Omega|}\int_{\partial\Omega}u_j\varphi d\sigma\right.\\
&\qquad\qquad\qquad\qquad\qquad\qquad\qquad\qquad\left.+\varepsilon\int_{\omega_{\varepsilon}}\rho_{\varepsilon}v_{j,\varepsilon}\varphi dx-\frac{\varepsilon}{\mu_j}\int_{\omega_{\varepsilon}}\nabla v_{j,\varepsilon}\cdot\nabla\varphi dx\right|
\end{split}
\end{equation}
(see also \eqref{def_tilde_rho} for the definition of $\tilde\rho_{\varepsilon}$). We observe that  by the rule of change of variables in integrals we have
\begin{equation}\label{J2e_eq1.0}
\begin{split}
\int_{\omega_{\varepsilon}}\frac{1}{\varepsilon}\tilde\rho_{\varepsilon}u_j\varphi dx&=\int_0^{|\partial\Omega|}\int_0^1\tilde\rho_{\varepsilon}(u_j\circ\psi_{\varepsilon}(s,\xi))(\varphi\circ\psi_{\varepsilon}(s,\xi))(1-\varepsilon\xi\kappa(s))d\xi ds\\
&=\int_0^{|\partial\Omega|}\int_0^1\tilde\rho_{\varepsilon}(u_j\circ\psi_{\varepsilon})(\varphi\circ\psi_{\varepsilon})d\xi ds\\
&\quad-\int_0^{|\partial\Omega|}\int_0^1\tilde\rho_{\varepsilon}(u_j\circ\psi_{\varepsilon}(s,\xi))(\varphi\circ\psi_{\varepsilon}(s,\xi))\varepsilon\xi\kappa(s) d\xi ds
\end{split}
\end{equation}
 and
\begin{equation}\label{J5_eq1.0}
\begin{split}
&\frac{\varepsilon}{\mu_j}\int_{\omega_{\varepsilon}}\nabla v_{j,\varepsilon}\cdot\nabla\varphi dx\\
&=\frac{\varepsilon^2}{\mu_j}\int_0^{|\partial\Omega|}\int_0^1\left(\frac{1}{\varepsilon^2}\partial_{\xi} w_{j}(s,\xi)\partial_{\xi}(\varphi\circ\psi_{\varepsilon})(s,\xi)+\partial_s w_{j}(s,\xi)\partial_s(\varphi\circ\psi_{\varepsilon})(s,\xi)\right.\\
&\quad\left. +\varepsilon\xi\kappa(s)\xi\sum_{j=1}^{+\infty}(j+1)(\varepsilon\xi\kappa(s))^{j-1}\partial_s w_{j}(s,\xi)\partial_s(\varphi\circ\psi_{\varepsilon})(s,\xi)\right)(1-\varepsilon\xi\kappa(s))d\xi ds\\
&=\frac{1}{\mu_j}\int_0^{|\partial\Omega|}\int_0^1\partial_{\xi} w_{j}(s,\xi)\partial_{\xi}(\varphi\circ\psi_{\varepsilon})(s,\xi)d\xi\,ds\\
&\quad +\frac{1}{\mu_j}\int_0^{|\partial\Omega|}\int_0^1\partial_{\xi} w_{j}(s,\xi)\partial_{\xi}(\varphi\circ\psi_{\varepsilon})\varepsilon\xi\kappa(s)d\xi ds\\
&\quad+\frac{\varepsilon^2}{\mu_j}\int_0^{|\partial\Omega|}\int_0^1\bigg(\partial_s w_{j}(s,\xi)\partial_s(\varphi\circ\psi_{\varepsilon})(s,\xi)\\
&\quad+\varepsilon\xi\kappa(s)\xi\sum_{j=1}^{+\infty}(j+1)(\varepsilon\xi\kappa(s))^{j-1}\partial_s w_{j}(s,\xi)\partial_s(\varphi\circ\psi_{\varepsilon})(s,\xi)\bigg)(1-\varepsilon\xi\kappa(s))d\xi ds
\end{split}
\end{equation}
(see also \eqref{grad2}). In addition, by integrating by parts and by \eqref{w0probl} one verifies that
\begin{equation}\label{J5_eq1.1}
\begin{split}
&\frac{1}{\mu_j}\int_0^{|\partial\Omega|}\int_0^1\partial_{\xi} w_{j}(s,\xi)\partial_{\xi}(\varphi\circ\psi_{\varepsilon})(s,\xi)d\xi\,ds\\
&=-\frac{M}{|\partial\Omega|}\int_{\partial\Omega}u_j\varphi d\sigma+\frac{M}{|\partial\Omega|}\int_0^{|\partial\Omega|}\int_0^1(u_j\circ\psi_{\varepsilon}(s,0))(\varphi\circ\psi_{\varepsilon}(s,\xi)) d\xi ds\,.
\end{split}
\end{equation}
Then by \eqref{J2e_eq1.0}, \eqref{J5_eq1.0}, and \eqref{J5_eq1.1} one deduces that the right hand side of the equality in \eqref{mod1_lem} equals
\[
\frac{\mu_j}{1+\mu_j}\left|J_{1,\varepsilon}+J_{2,\varepsilon}+J_{3,\varepsilon}+J_{4,\varepsilon}+J_{5,\varepsilon}+J_{6,\varepsilon}\right|
\]
with
\begin{eqnarray*}
J_{1,\varepsilon}&:=&\varepsilon\int_{\Omega}u_j \varphi dx,\\
J_{2,\varepsilon}&:=&-\int_0^{|\partial\Omega|}\int_0^1\tilde\rho_{\varepsilon}(u_j\circ\psi_{\varepsilon}(s,\xi))(\varphi\circ\psi_{\varepsilon}(s,\xi))\varepsilon\xi\kappa(s) d\xi ds,\\
J_{3,\varepsilon}&:=&\varepsilon\int_{\omega_{\varepsilon}}\rho_{\varepsilon}v_{j,\varepsilon}\varphi dx,\\
J_{4,\varepsilon}&:=&-\frac{1}{\mu_j}\int_0^{|\partial\Omega|}\int_0^1\partial_{\xi} w_{j}(s,\xi)\partial_{\xi}(\varphi\circ\psi_{\varepsilon})(s,\xi)\varepsilon\xi\kappa(s)d\xi ds\\
J_{5,\varepsilon}&:=&-\frac{\varepsilon^2}{\mu_j}\int_0^{|\partial\Omega|}\int_0^1\bigg(\partial_s w_{j}(s,\xi)\partial_s(\varphi\circ\psi_{\varepsilon})(s,\xi)\\
&&-\varepsilon\xi\kappa(s)\xi\sum_{j=1}^{+\infty}(j+1)(\varepsilon\xi\kappa(s))^{j-1}\partial_s w_{j}(s,\xi)\partial_s(\varphi\circ\psi_{\varepsilon})(s,\xi)\bigg)(1-\varepsilon\xi\kappa(s))d\xi ds,\\
J_{6,\varepsilon}&:=&\int_0^{|\partial\Omega|}\int_0^1\tilde\rho_{\varepsilon}(u_j\circ\psi_{\varepsilon})(\varphi\circ\psi_{\varepsilon})d\xi ds-\frac{M}{|\partial\Omega|}\int_0^{|\partial\Omega|}\int_0^1(u_j\circ\gamma)(\varphi\circ\psi_{\varepsilon}) d\xi ds.\\
\end{eqnarray*}
To prove the validity of the lemma we will show that there exists $C>0$
\begin{equation}\label{aim}
|J_{k,\varepsilon}|\le C\varepsilon\|\varphi\|_{\varepsilon}\qquad\forall\varepsilon\in(0,\varepsilon_{\Omega,j})\,,\varphi\in\mathcal H_{\varepsilon}(\Omega)
\end{equation}
for all $k\in\{1,\dots,6\}$. In the sequel we find convenient to adopt the following convention: we will denote by $C$ a positive constant which does not depend on $\varepsilon$ and $\varphi$ and which may be re-defined line by line.

We begin with $J_{1,\varepsilon}$. We observe that there exists $C>0$ such that 
\begin{equation}\label{J1e_eq2}
\|u_j\|_{\varepsilon}\leq C
\end{equation}
for all $\varepsilon\in(0,\varepsilon_{\Omega,j})$. The proof of \eqref{J1e_eq2} can be effected  by noting that
\begin{equation}\label{J1e_eq2.1}
\lim_{\varepsilon\rightarrow 0}\int_{\Omega}\rho_{\varepsilon}u_j^2 dx=\frac{M}{|\partial\Omega|}\int_{\partial\Omega}u_j^2 d\sigma=\frac{M}{|\partial\Omega|}.
\end{equation}
and by a standard continuity argument. Then, by the  H\"older inequality and  by Proposition \ref{L2<eps} we deduce that
\begin{equation*}
{\left|J_{1,\varepsilon}\right|=\varepsilon\int_{\Omega}\left|u_j\varphi\right| dx}\leq \varepsilon \|u_j\|_{L^2(\Omega)}\|\varphi\|_{L^2(\Omega)}\leq C\varepsilon\|u_j\|_{\varepsilon}\|\varphi\|_{\varepsilon}\leq C\varepsilon\|\varphi\|_{\varepsilon}
\end{equation*}
for all $\varepsilon\in(0,\varepsilon_{\Omega,j})$. Accordingly \eqref{aim} holds with $k=1$.

Now we consider $J_{2,\varepsilon}$. We write
\begin{equation*}
J_{2,\varepsilon}=-\int_0^{|\partial\Omega|}\int_0^1\tilde\rho_{\varepsilon}(u_j\circ\psi_{\varepsilon}(s,\xi))(\varphi\circ\psi_{\varepsilon}(s,\xi))\frac{\xi\kappa(s)}{1-\varepsilon\xi\kappa(s)}\varepsilon(1-\varepsilon\xi\kappa(s))d\xi ds\,.
\end{equation*}
Then we observe that by \eqref{positiveinf} there exists a constant $C$ such that 
\begin{equation}\label{J2e_eq2}
\frac{\xi\kappa(s)}{1-\varepsilon\xi\kappa(s)}<C\qquad\forall\varepsilon\in(0,\varepsilon_{\Omega,j})\,,\;(s,\xi)\in[0,|\partial\Omega|)\times(0,1)\,.
\end{equation}
Hence, by the Cauchy-Schwarz inequality and by \eqref{J1e_eq2} we have
\begin{equation*}
\begin{split}
\left|J_{2,\varepsilon}\right|&\leq C\varepsilon\int_{\omega_{\varepsilon}}\frac{\tilde\rho_{\varepsilon}}{\varepsilon}|u_j\varphi|dx\\
&\leq C\varepsilon\int_{\Omega}\rho_{\varepsilon}|u_j\varphi|dx\leq C\varepsilon\norm{u_j}_{\varepsilon}\norm{\varphi}_{\varepsilon}\leq C\varepsilon\norm{\varphi}_{\varepsilon}\quad\forall\varepsilon\in(0,\varepsilon_{\Omega,j})
\end{split}
\end{equation*}
and the validity of \eqref{aim} with $k=2$ is proved.

We now pass to consider $J_{3,\varepsilon}$. By the H\"older inequality we have
\begin{equation*}
\left|J_{3,\varepsilon}\right|=\varepsilon\int_{\omega_{\varepsilon}}\rho_{\varepsilon}^{\frac{1}{2}}v_{j,\varepsilon}\, \rho_{\varepsilon}^{\frac{1}{2}}\varphi dx\le
\varepsilon\left(\int_{\omega_{\varepsilon}}\rho_{\varepsilon}v^2_{j,\varepsilon}\,dx\right)^{\frac{1}{2}} \left(\int_{\omega_{\varepsilon}}\rho_{\varepsilon}\varphi^2\,dx\right)^{\frac{1}{2}}\,.
\end{equation*}
Then \eqref{aim} with $k=3$ follows by \eqref{vjee.eq2}.

For $J_{4,\varepsilon}$ we observe that we can write
\[
J_{4,\varepsilon}=-\frac{1}{\mu_j}\int_0^{|\partial\Omega|}\int_0^1\partial_{\xi} w_{j}(s,\xi)\partial_{\xi}(\varphi\circ\psi_{\varepsilon})(s,\xi)\frac{\xi\kappa(s)}{1-\varepsilon\xi\kappa(s)}\varepsilon(1-\varepsilon\xi\kappa(s))d\xi ds\,.
\]
Then by \eqref{J2e_eq2}, by the rule of change of variables in integrals, and by the H\"older inequality we have 
\begin{equation*}
\left|J_{4,\varepsilon}\right|\le C\varepsilon\|\nabla\varphi\|_{L^2(\Omega)}\qquad\forall\varepsilon\in(0,\varepsilon_{\Omega,j})\,.
\end{equation*}
Thus \eqref{aim} with $k=4$ follows by the definition of $\|\cdot\|_\varepsilon$ (cf.~\eqref{bilinear_eps}). 

Similarly, by the rule of change of variables in integrals, and by the H\"older inequality one deduces that 
\begin{equation*}
\left|J_{5,\varepsilon}\right|\le C\varepsilon\|\nabla\varphi\|_{L^2(\Omega)}\qquad\forall\varepsilon\in(0,\varepsilon_{\Omega,j})
\end{equation*}
and \eqref{aim} with $k=5$ follows by the definition of $\|\cdot\|_\varepsilon$.

Finally we consider $J_{6,\varepsilon}$. By a straightforward computation one verifies that 
\begin{equation}\label{J7J8}
J_{6,\varepsilon}=J_{7,\varepsilon}+J_{8,\varepsilon}
\end{equation}
with
\[
\begin{split}
J_{7,\varepsilon}&:=\int_0^{|\partial\Omega|}\int_0^1 \left(\tilde\rho_{\varepsilon}-\frac{M}{|\partial\Omega|}\right)(u_j\circ\gamma)(\varphi\circ\psi_{\varepsilon})d\xi ds\,,\\
J_{8,\varepsilon}&:=\int_0^{|\partial\Omega|}\int_0^1\tilde\rho_{\varepsilon}\bigl(u_j\circ\psi_{\varepsilon}(s,\xi))-(u_j\circ\psi_{\varepsilon}(s,0))\bigr)(\varphi\circ\psi_{\varepsilon}(s,\xi)) d\xi ds\,.
\end{split}
\]
We first study $J_{7,\varepsilon}$.  By \eqref{asymptotic_rho} it follows that 
\begin{equation}\label{J7}
\begin{split}
&|J_{7,\varepsilon}|=\left|\int_0^{|\partial\Omega|}\int_0^1 \left(\frac{\frac{1}{2}K M-|\Omega||\partial\Omega|}{|\partial\Omega|^2}\varepsilon+\varepsilon^2\tilde{R}(\varepsilon)\right)(u_j\circ\gamma)(\varphi\circ\psi_{\varepsilon}) d\xi ds\right|\\
&\qquad\leq C\varepsilon\int_0^{|\partial\Omega|}\int_{0}^1|(u_j\circ\gamma)(\varphi\circ\psi_{\varepsilon})|d\xi ds\\
\end{split}
\end{equation}
Hence, by the H\"older inequality, by the rule of change of variables in integrals, by condition \eqref{ucondition}, and by \eqref{positiveinf} we have
\[
\begin{split}
&|J_{7,\varepsilon}|\leq C\varepsilon\left(\int_{\partial\Omega}u_j^2d\sigma\right)^{\frac{1}{2}}\left(\int_0^{|\partial\Omega|}\int_0^1(\varphi\circ\psi_{\varepsilon}(s,\xi))^2\frac{\varepsilon(1-\varepsilon\xi\kappa(s))}{\varepsilon(1-\varepsilon\xi\kappa(s))}d\xi ds\right)^{\frac{1}{2}}\\
&\qquad\leq C\varepsilon \left(\int_{\omega_{\varepsilon}}\frac{1}{\varepsilon}\varphi^2dx\right)^{\frac{1}{2}}\leq C\varepsilon \norm{\varphi}_{\varepsilon}\,.
\end{split}
\]
We now turn to $J_{8,\varepsilon}$. Since $\Omega$ is assumed to be of class $C^{3}$ and $u_j$ is a solution of \eqref{Steklov}, a classical elliptic regularity argument shows that $u_j\in C^{2}(\overline\Omega)$ (see e.g., \cite{agmon1}). In addition,  by the regularity of $\Omega$ we also have that $\psi_{\varepsilon}$ is of class $C^2$ from $[0,|\partial\Omega|)\times(0,1)$ to $\mathbb{R}^2$.  Thus $u_j\circ\psi_{\varepsilon}$ is of class $C^2$ from $[0,|\partial\Omega|)\times(0,1)$ to $\mathbb{R}$ and we can prove that for all $\varepsilon\in(0,\varepsilon_{\Omega,j})$ and $(s,\xi)\in [0,|\partial\Omega|)\times(0,1)$ there exists $\xi^*$ such that 
\[
(u_j\circ\psi_{\varepsilon}(s,\xi))-(u_j\circ\psi_{\varepsilon}(s,0))=\xi\partial_{\xi}(u_j\circ\psi_{\varepsilon})(s,\xi^*).
\]
 Then, by taking $t^*:=\varepsilon\xi^*$ we have
\begin{multline}\label{J801}
J_{8,\varepsilon}=\tilde\rho_{\varepsilon}\int_0^{|\partial\Omega|}\int_0^1\xi\partial_{\xi}(u_j\circ\psi_{\varepsilon})(s,\xi^*)(\varphi\circ\psi_{\varepsilon}(s,\xi))d\xi ds\\
=\tilde\rho_{\varepsilon}\int_0^{|\partial\Omega|}\int_0^{\varepsilon} t\partial_t(u_j\circ\psi)(s,t^*)(\varphi\circ\psi(s,t))\frac{dt}{\varepsilon}ds\,.
\end{multline}
Hence, by the H\"older inequality we deduce that

\begin{multline}\label{J802}
|J_{8,\varepsilon}|\leq C\|u_j\|_{C^{1}(\overline\Omega)}\int_0^{|\partial\Omega|}\int_0^{\varepsilon}\frac{t}{\varepsilon^{\frac{1}{2}}}\frac{|\varphi\circ\psi|}{\varepsilon^{\frac{1}{2}}}dt ds\\
\leq C\|u_j\|_{C^{1}(\overline\Omega)}\left(\int_0^{|\partial\Omega|}\int_0^{\varepsilon}\frac{t^2}{\varepsilon} dt ds\right)^{\frac{1}{2}}\left(\int_0^{|\partial\Omega|}\int_0^{\varepsilon}\frac{(\varphi\circ\psi)^2}{\varepsilon} dt ds\right)^{\frac{1}{2}}\\
= C\|u_j\|_{C^{1}(\overline\Omega)}\frac{|\partial\Omega|^{\frac{1}{2}}}{\sqrt 3}\varepsilon\left(\int_0^{|\partial\Omega|}\int_0^{\varepsilon}\frac{(\varphi\circ\psi)^2}{\varepsilon} dt ds\right)^{\frac{1}{2}}.
\end{multline}

We now observe that we have $|D\psi(s,t)|=1-t\kappa(s)$ for all $(s,t)\in [0,|\partial\Omega|)\times(0,\varepsilon)$ and 
\[
\inf_{(s,t)\in [0,|\partial\Omega|)\times(0,\varepsilon)}(1-t\kappa(s))>0
\]
for all $\varepsilon\in(0,\varepsilon_{\Omega,j})$ (cf.~\eqref{positiveinf}). Thus, by the rule of change of variables in integrals we compute
\begin{equation}\label{J8}
|J_{8,\varepsilon}|\le C\varepsilon\left(\int_0^{|\partial\Omega|}\int_0^{\varepsilon}\frac{(\varphi\circ\psi)^2}{\varepsilon} \frac{1-t\kappa(s)}{1-t\kappa(s)}dt ds\right)^{\frac{1}{2}}\le C\varepsilon\|\phi\|_\varepsilon\,.
\end{equation}
Finally, by \eqref{J7J8}, \eqref{J7}, and \eqref{J8} one deduces that \eqref{aim} holds also for $k=6$. Our proof is now complete.\qed

\medskip

Our next step is to verify that $\norm{u_j+\varepsilon v_{j,\varepsilon}}^2_{\varepsilon}-\frac{M}{|\partial\Omega|}(1+\mu_j)$ is in $O(\varepsilon)$ for $\varepsilon\to 0$. To do so, we prove the following lemma.
\begin{lemma}\label{ultimolemmastep1}
There exists a constant $C>0$ such that
 \begin{equation*}
\left|\|u_j+\varepsilon v_{j,\varepsilon}\|^2_{\varepsilon}-\frac{M}{|\partial\Omega|}(1+\mu_j)\right|\le C\varepsilon\qquad\forall\varepsilon\in(0,\varepsilon_{\Omega,j})\,.
\end{equation*}
\end{lemma}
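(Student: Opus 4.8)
The plan is to compute $\|u_j+\varepsilon v_{j,\varepsilon}\|_\varepsilon^2$ explicitly by expanding the square and controlling each piece. Writing out the bilinear form, we have
\[
\|u_j+\varepsilon v_{j,\varepsilon}\|_\varepsilon^2=\|u_j\|_\varepsilon^2+2\varepsilon\langle u_j,v_{j,\varepsilon}\rangle_\varepsilon+\varepsilon^2\|v_{j,\varepsilon}\|_\varepsilon^2,
\]
so the first thing to do is to establish that the first term converges to $\frac{M}{|\partial\Omega|}(1+\mu_j)$ at rate $\varepsilon$, and that the remaining two terms are $O(\varepsilon)$. For the cross term, I would use the Cauchy-Schwarz inequality for $\langle\cdot,\cdot\rangle_\varepsilon$ together with the bounds $\|u_j\|_\varepsilon\le C$ (which is \eqref{J1e_eq2}) and $\sqrt\varepsilon\,\|v_{j,\varepsilon}\|_\varepsilon\le C$ (Proposition \ref{vjee}); this gives $|2\varepsilon\langle u_j,v_{j,\varepsilon}\rangle_\varepsilon|\le 2\varepsilon\|u_j\|_\varepsilon\|v_{j,\varepsilon}\|_\varepsilon\le C\sqrt\varepsilon$. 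The last term is even smaller: $\varepsilon^2\|v_{j,\varepsilon}\|_\varepsilon^2\le C\varepsilon$ again by Proposition \ref{vjee}. So modulo the first term the bound is already of order $\sqrt\varepsilon$, not $\varepsilon$ — which is a problem, since the statement claims $O(\varepsilon)$.

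This forces a more careful treatment of the cross term: we must extract a cancellation rather than bound crudely. The point is that $v_{j,\varepsilon}$ is supported in $\omega_\varepsilon$, and on $\omega_\varepsilon$ we have $\rho_\varepsilon\approx\varepsilon^{-1}\tilde\rho_\varepsilon$, so $\varepsilon\langle u_j,v_{j,\varepsilon}\rangle_\varepsilon$ contains a term $\int_{\omega_\varepsilon}\tilde\rho_\varepsilon u_j v_{j,\varepsilon}\,dx$ which is genuinely $O(\sqrt\varepsilon)$ (since $\|v_{j,\varepsilon}\|_{L^2}=O(\sqrt\varepsilon)$ by Proposition \ref{vjeL2} and the strip has width $\varepsilon$) — wait, more precisely $\int_{\omega_\varepsilon}\tilde\rho_\varepsilon u_j v_{j,\varepsilon}\,dx$ is $O(\|v_{j,\varepsilon}\|_{L^2(\Omega)}\cdot\|u_j\|_{L^2(\omega_\varepsilon)})$ and $\|u_j\|_{L^2(\omega_\varepsilon)}=O(\sqrt\varepsilon)$, so this piece is actually $O(\varepsilon)$. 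Similarly the gradient contribution $\varepsilon\int_{\omega_\varepsilon}\nabla u_j\cdot\nabla v_{j,\varepsilon}\,dx$: using \eqref{grad2}, the dangerous factor is $\frac1{\varepsilon^2}\partial_\xi(u_j\circ\psi_\varepsilon)\partial_\xi w_j$, but $\partial_\xi(u_j\circ\psi_\varepsilon)=O(\varepsilon)$ because $u_j\in C^2(\overline\Omega)$ and differentiation in $\xi$ brings down a factor $\varepsilon$ (chain rule through $\psi_\varepsilon(s,\xi)=\gamma(s)-\varepsilon\xi\nu$). Hence $\frac1{\varepsilon^2}\partial_\xi(u_j\circ\psi_\varepsilon)\partial_\xi w_j=O(\varepsilon^{-1})$ on the band, which upon integrating against the Jacobian $\varepsilon(1-\varepsilon\xi\kappa)$ and multiplying by the outer $\varepsilon$ gives $O(\varepsilon)$. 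So the plan for the cross term is: expand $\langle u_j,v_{j,\varepsilon}\rangle_\varepsilon$ into its $L^2$-with-weight and Dirichlet parts, change variables to $(s,\xi)$, and estimate each using the $C^2$ regularity of $u_j$ and the explicit form \eqref{w0} of $w_j$, never using the crude $\|v_{j,\varepsilon}\|_\varepsilon=O(\varepsilon^{-1/2})$ bound on its own.

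For the leading term $\|u_j\|_\varepsilon^2=\|\nabla u_j\|_{L^2(\Omega)}^2+\int_\Omega\rho_\varepsilon u_j^2\,dx$, I would use that $u_j$ solves \eqref{Steklov}, so testing the weak formulation of \eqref{Steklov} against $u_j$ gives $\|\nabla u_j\|_{L^2(\Omega)}^2=\frac{M}{|\partial\Omega|}\mu_j\int_{\partial\Omega}u_j^2\,d\sigma=\frac{M}{|\partial\Omega|}\mu_j$ by the normalization \eqref{ucondition}. For the weighted mass term, I would use $\rho_\varepsilon=\varepsilon+\frac1\varepsilon\tilde\rho_\varepsilon\chi_{\omega_\varepsilon}$, handle $\varepsilon\int_\Omega u_j^2\,dx=O(\varepsilon)$ trivially, and for $\frac1\varepsilon\int_{\omega_\varepsilon}\tilde\rho_\varepsilon u_j^2\,dx$ change variables to $(s,\xi)$, use \eqref{asymptotic_rho} to replace $\tilde\rho_\varepsilon$ by $\frac{M}{|\partial\Omega|}+O(\varepsilon)$, replace $u_j\circ\psi_\varepsilon(s,\xi)$ by $u_j\circ\gamma(s)+O(\varepsilon)$ via the $C^2$ bound and a first-order Taylor estimate in $\xi$ as in the treatment of $J_{8,\varepsilon}$, and the Jacobian $1-\varepsilon\xi\kappa(s)$ by $1+O(\varepsilon)$; what survives is $\int_0^{|\partial\Omega|}\int_0^1\frac{M}{|\partial\Omega|}(u_j\circ\gamma(s))^2\,d\xi\,ds=\frac{M}{|\partial\Omega|}\int_{\partial\Omega}u_j^2\,d\sigma=\frac{M}{|\partial\Omega|}$, with all errors $O(\varepsilon)$. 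Adding up gives $\|u_j\|_\varepsilon^2=\frac{M}{|\partial\Omega|}(1+\mu_j)+O(\varepsilon)$, and combining with the $O(\varepsilon)$ bounds on the cross and quadratic terms finishes the proof. The main obstacle is precisely the cross term: one has to resist the temptation to bound it by $\varepsilon\|u_j\|_\varepsilon\|v_{j,\varepsilon}\|_\varepsilon$ (which only yields $O(\sqrt\varepsilon)$) and instead exploit the gain of a factor $\varepsilon$ coming from $\partial_\xi(u_j\circ\psi_\varepsilon)=O(\varepsilon)$ and from the fact that $u_j$ restricted to the thin strip has small $L^2$ norm.
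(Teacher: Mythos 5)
Your proposal is correct and follows essentially the same route as the paper: expand the square, use the divergence theorem (weak formulation of \eqref{Steklov}) so that the Dirichlet energy of $u_j$ matches $\frac{M\mu_j}{|\partial\Omega|}$ exactly, treat the leading mass term by the change of variables $(s,\xi)$ together with \eqref{asymptotic_rho} and a Taylor estimate of $u_j$ near $\partial\Omega$, and — crucially, as you identify — avoid the crude Cauchy--Schwarz bound on the cross term by exploiting $\partial_\xi(u_j\circ\psi_\varepsilon)=O(\varepsilon)$ for the gradient part and the smallness of $u_j$ and $v_{j,\varepsilon}$ in $L^2(\omega_\varepsilon)$ for the mass part. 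The only cosmetic difference is that the paper bounds the mass cross term by changing variables and using the boundedness of $w_j$ directly rather than your H\"older argument on the strip; both yield the same $O(\varepsilon)$.
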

\proof
A straightforward computation shows that 
\begin{equation*}
\begin{split}
&\|u_j+\varepsilon v_{j,\varepsilon}\|^2_{\varepsilon}-\frac{M}{|\partial\Omega|}(1+\mu_j)\\
&\qquad=\langle u_j+\varepsilon v_{j,\varepsilon}\,,\, u_j+\varepsilon v_{j,\varepsilon}\rangle_\varepsilon-\frac{M}{|\partial\Omega|}(1+\mu_j)=\sum_{k=1}^5L_{k,\varepsilon},
\end{split}
\end{equation*}
with
\small
\begin{eqnarray*}
L_{1,\varepsilon}&:=&\int_{\Omega}\rho_{\varepsilon}u_j^2 dx-\frac{M}{|\partial\Omega|},\\
L_{2,\varepsilon}&:=&\int_{\Omega}|\nabla u_j|^2 dx-\frac{M\mu_j}{|\partial\Omega|},\\
L_{3,\varepsilon}&:=&2\varepsilon\int_{\omega_{\varepsilon}}\rho_{\varepsilon}u_jv_{j,\varepsilon} dx,\\
L_{4,\varepsilon}&:=&2\varepsilon\int_{\omega_{\varepsilon}}\nabla u_j\cdot\nabla v_{j,\varepsilon}dx,\\
L_{5,\varepsilon}&:=&\varepsilon^2\int_{\omega_{\varepsilon}}\rho_{\varepsilon}v_{j,\varepsilon}^2 dx+\varepsilon^2\int_{\omega_{\varepsilon}}|\nabla v_{j,\varepsilon}|^2dx\,.
\end{eqnarray*}
\normalsize
To prove the validity of the lemma we will show that there exists $C>0$ such that 
\begin{equation}\label{LlessCe}
|L_{k,\varepsilon}|\leq C\varepsilon\qquad\forall \varepsilon\in(0,\varepsilon_{\Omega,j})\,,
\end{equation} 
for all $k\in\{1,\dots,5\}$. In the sequel we will denote by $C$ a positive constant which does not depend on $\varepsilon$ and which may be re-defined line by line.

We begin with $L_{1,\varepsilon}$. We observe that by condition \eqref{ucondition} we have
\[
L_{1,\varepsilon}=\frac{1}{\varepsilon}\int_{\omega_{\varepsilon}}\tilde\rho_{\varepsilon}u_j^2 dx-\frac{M}{|\partial\Omega|}\int_{\partial\Omega}u_j^2 d\sigma+\varepsilon\int_{\Omega}u_j^2 dx\,.
\]
Hence, by \eqref{asymptotic_rho} we deduce that
\begin{equation}\label{L1.1}
\begin{split}
L_{1,\varepsilon}&=\frac{M}{|\partial\Omega|}\left(\frac{1}{\varepsilon}\int_{\omega_{\varepsilon}}u_j^2 dx-\int_{\partial\Omega}u_j^2 d\sigma\right)\\
&\quad +\left(\frac{\frac{1}{2}KM-|\Omega||\partial\Omega|}{|\partial\Omega|^2}+\varepsilon \tilde{R}(\varepsilon)\right)\int_{\omega_{\varepsilon}}u_j^2dx+\varepsilon\int_{\Omega}u_j^2 dx.
\end{split}
\end{equation}
Since $\Omega$ is assumed to be of class $C^{3}$ and $u_j$ is a solution of \eqref{Steklov}, a classical elliptic regularity argument shows that $u_j\in C^{2}(\overline\Omega)$ (see e.g., \cite{agmon1}). Then one verifies that 
\begin{equation}\label{L1.2}
\int_{\omega_{\varepsilon}}u_j^2dx\le C|\partial\Omega|\,\|u_j\|^2_{C(\overline\Omega)}\,\varepsilon\qquad\text{and }\qquad\varepsilon\int_{\Omega}u_j^2 dx\le |\Omega|\, \|u_j\|^2_{C(\overline\Omega)}\,\varepsilon\,. 
\end{equation}
In addition,  the map which takes $(s,t)\in[0,|\partial\Omega|)\times(0,\varepsilon)$ to $\tilde u_j(s,t):=(u_j\circ\psi(s,t))^2(1-t\kappa(s))$ is of class $C^2$. It follows that
\begin{equation}\label{L1.3}
\begin{split}
&\left|\frac{1}{\varepsilon}\int_{\omega_{\varepsilon}}u_j^2 dx-\int_{\partial\Omega}u_j^2 d\sigma\right|\\
&\qquad\le\int_0^{|\partial\Omega|}\frac{1}{\varepsilon}\int_0^{\varepsilon}\left|(u_j\circ\psi(s,t))^2(1-t\kappa(s))-(u_j\circ\psi(s,0))^2\right|dtds\\
&\qquad\leq\int_0^{|\partial\Omega|}\frac{1}{\varepsilon}\left(\int_0^{\varepsilon}\norm{\tilde u_j}_{C^1([0,|\partial\Omega|]\times[0,\varepsilon])}\,tdt\right)ds\leq C\varepsilon.
\end{split}
\end{equation}
Then the validity of \eqref{LlessCe} with $k=1$ follows by \eqref{L1.1}, \eqref{L1.2}, and \eqref{L1.3}. 

We now consider $L_{2,\varepsilon}$. Since $u_j$ is an eigenfunction of \eqref{Steklov} a standard argument based on the divergence theorem shows that 
\[
\int_{\Omega}|\nabla u_j|^2 dx=\frac{M\mu_j}{|\partial\Omega|}\int_{\partial\Omega}u_j^2 d\sigma\,.
\]
Then, by condition \eqref{ucondition} we have
\begin{equation*}
L_{2,\varepsilon}=0,
\end{equation*}
which readily implies that \eqref{LlessCe} holds with $k=2$.

To prove \eqref{LlessCe}  for $k=3$ we observe that $\rho_\varepsilon = \varepsilon+\frac{1}{\varepsilon}\tilde\rho_\varepsilon$ on $\omega_\varepsilon$. Thus by a computation based on rule of change of variables in integrals  we have 
\[
\begin{split}
L_{3,\varepsilon}&=2\varepsilon \left(\varepsilon+\frac{1}{\varepsilon}\tilde\rho(\varepsilon)\right)\int_{\omega_{\varepsilon}}u_jv_{j,\varepsilon} dx\\
&= 2\varepsilon^2 \left(\varepsilon+\frac{1}{\varepsilon}\tilde\rho(\varepsilon)\right)\int_0^{|\partial\Omega|}\int_0^1 u_j\circ\psi_\varepsilon(s,\xi)w_j(s,\xi)(1-\varepsilon\xi\kappa(s))\,d\xi ds\,.
\end{split}
\]
Hence,
\[
|L_{3,\varepsilon}|\le 2\varepsilon^2 \left|\varepsilon+\frac{1}{\varepsilon}\tilde\rho(\varepsilon)\right|\left(1+\varepsilon_{\Omega,j}\sup_{s\in[0,|\partial\Omega|)}|\kappa(s)|\right)\norm{u_j}_{L^\infty(\Omega)}\int_0^{|\partial\Omega|}\int_0^1w_j d\xi ds
\]
and the validity of \eqref{LlessCe} with $k=3$ follows by \eqref{asymptotic_rho}.

We now consider the case when  $k=4$. By \eqref{grad2} and by the rule of change of variables in integrals we have
\[
\begin{split}
&L_{4,\varepsilon}\\
&=-2\varepsilon\int_0^{|\partial\Omega|}\int_0^1\left(\frac{1}{\varepsilon^2}\partial_{\xi}(u_j\circ\psi_{\varepsilon}(s,\xi))\partial_{\xi}w_j(s,\xi)+\frac{\partial_s (u\circ\psi_{\varepsilon}(s,\xi))\partial_s w_j(s,\xi)}{(1-\varepsilon\xi\kappa(s))^2}\right)\\
&\quad\qquad\qquad\qquad\qquad\qquad\qquad\qquad\qquad\qquad\qquad\qquad\qquad\times\varepsilon(1-\varepsilon\xi\kappa(s))\,d\xi ds
\end{split}
\]
Now, by equality $\psi_\varepsilon(s,\xi)=\gamma(s)+\epsilon\xi\nu(\gamma(s))$ and by membership of $u_j$ in $C^{2}(\overline\Omega)$, we verify that
\[
\left|\partial_{\xi}(u_j\circ\psi_{\varepsilon}(s,\xi))\right|=\varepsilon\left|\nu(\gamma(s))\cdot\nabla u_j(\psi_\varepsilon(s,\xi))\right|\le\varepsilon \| u_j \|_{C^1(\overline\Omega)}
\]
for all $\varepsilon\in(0,\varepsilon_{\Omega,j})$ and for all $(s,\xi)\in[0,|\partial\Omega|)\times(0,1)$. Hence, by \eqref{positiveinf} and by a straightforward computation, we deduce that \eqref{LlessCe} holds with $k=4$. 

Finally, the validity of \eqref{LlessCe} for  $k=5$ is a consequence of Proposition \ref{vjee} and of equality $L_{5,\varepsilon}=\varepsilon^2\norm{v_{j,\varepsilon}}^2_\varepsilon$.

\endproof

\medskip

We are now ready to prove Proposition \ref{intermediate} by Lemma \ref{lemma_fondamentale}. 

\medskip

\noindent{\em Proof of Proposition \ref{intermediate}.}
We first prove \eqref{intermediate.eq1}. By Lemma \ref{ultimolemmastep1} there exists $\varepsilon^*_j\in(0,\varepsilon_{\Omega,j})$ such that 
\[
\|u_j+\varepsilon v_{j,\varepsilon}\|_{\varepsilon}>\frac{1}{2}\sqrt{\frac{M}{|\partial\Omega|}}(1+\mu_j)^{\frac{1}{2}}\qquad\forall\varepsilon\in(0,\varepsilon^*_j)\,.
\]  
Hence, by multiplying both sides of \eqref{condition_1} by $\norm{u_j+\varepsilon v_{j,\varepsilon}}_{\varepsilon}^{-1}$ we deduce that
\begin{equation}\label{condition_11}
\left|\left\langle\mathcal A_{\varepsilon}\left(\frac{u_j+\varepsilon v_{j,\varepsilon}}{\|u_j+\varepsilon v_{j,\varepsilon}\|_{\varepsilon}}\right)-\frac{1}{1+\mu_j}\left(\frac{u_j+\varepsilon v_{j,\varepsilon}}{\|u_j+\varepsilon v_{j,\varepsilon}\|_{\varepsilon}}\right),\varphi\right\rangle_{\varepsilon}\right|\leq C_2\,\varepsilon\, \|\varphi\|_{\varepsilon}
\end{equation}
for all $\varphi\in H^1(\Omega)$ and $\varepsilon\in(0,\varepsilon^*_j)$, with $C_2:=2\sqrt{\frac{|\partial\Omega|}{M}}(1+\mu_j)^{-\frac{1}{2}}C_1$. By taking $\varphi=\mathcal A_{\varepsilon}\left(\frac{u_j+\varepsilon v_{j,\varepsilon}}{\|u_j+\varepsilon v_{j,\varepsilon}\|_{\varepsilon}}\right)-\frac{1}{1+\mu_j}\left(\frac{u_j+\varepsilon v_{j,\varepsilon}}{\|u_j+\varepsilon v_{j,\varepsilon}\|_{\varepsilon}}\right)$ in \eqref{condition_11}, we obtain
\begin{equation*}
\left\|\mathcal{A}_{\varepsilon}\left(\frac{u_j+\varepsilon v_{j,\varepsilon}}{\|u_j+\varepsilon v_{j,\varepsilon}\|_{\varepsilon}}\right)-\frac{1}{1+\mu_j}\left(\frac{u_j+\varepsilon v_{j,\varepsilon}}{\|u_j+\varepsilon v_{j,\varepsilon}\|_{\varepsilon}}\right)\right\|_{\varepsilon}\leq C_2\, \varepsilon\qquad\forall\varepsilon\in(0,\varepsilon^*_j)\,.
\end{equation*}
As a consequence, one can verify that the assumptions of Lemma \ref{lemma_fondamentale}  hold with $A=\mathcal{A}_\varepsilon$, $H=\mathcal H_{\varepsilon}(\Omega)$, $\eta=\frac{1}{1+\mu_j}$, $u=\frac{u_j+\varepsilon v_{j,\varepsilon}}{\|u_j+\varepsilon v_{j,\varepsilon}\|_{\varepsilon}}$, and $r=C_2\,\varepsilon$ with $\varepsilon\in(0,\varepsilon^*_j)$ (see also Proposition \ref{Ae}). Accordingly, for all $\varepsilon\in(0,\varepsilon^*_j)$ there exists  an eigenvalue $\eta^*_\varepsilon$ of $\mathcal A_{\varepsilon}$ such that
\begin{equation}\label{quasi_auto_1}
\left|\frac{1}{1+\mu_j}-\eta^*_\varepsilon\right|\leq C_2\varepsilon.
\end{equation}
Now we take $\varepsilon_{\Omega,j}^\#:=\min\{\varepsilon^*_j\,,\,\delta_j\,,\,C_2^{-1}r^*_j\}$ with $\delta_j$ and $r^*_j$ as in Lemma \ref{only}. By \eqref{quasi_auto_1} and Lemma \ref{only}, the eigenvalue $\eta^*_\varepsilon$ has to coincide with $\frac{1}{1+\lambda_j(\varepsilon)}$ for all $\varepsilon\in(0,\varepsilon_{\Omega,j}^\#)$. It follows  that
\begin{equation*}
\left| \mu_j- \lambda_j(\varepsilon)\right|\leq C_2\left|(1+\mu_j)(1+\lambda_j(\varepsilon))\right|\varepsilon\qquad\forall\varepsilon\in(0,\varepsilon_{\Omega,j}^\#). 
\end{equation*}
Then the validity of \eqref{intermediate.eq1}   follows by Theorem \ref{convergence} (i) and by a straightforward computation.

We now consider \eqref{intermediate.eq2}.  By Lemma \ref{lemma_fondamentale} with $r^*=r^*_j$ it follows that for all $\varepsilon\in(0,\varepsilon_{\Omega,j}^\#)$ there exists a function $u^*_\varepsilon\in\mathcal H_{\varepsilon}(\Omega)$ with $\|u^*_\varepsilon\|_{\varepsilon}=1$ which belongs to the space generated by all the eigenfunctions of $\mathcal{A}_\varepsilon$ associated with the eigenvalues contained in the segment $\left[\frac{1}{1+\mu_j}-r^*_j,\frac{1}{1+\mu_j}+r^*_j\right]$ and such that 
\begin{equation}\label{u*ineq}
\left\|u^*_\varepsilon-\frac{u_j+\varepsilon v_{j,\varepsilon}}{\|u_j+\varepsilon v_{j,\varepsilon}\|_{\varepsilon}}\right\|_{\varepsilon}\leq\frac{2C_2}{r^*_j}\varepsilon.
\end{equation}
Since $\varepsilon\in(0,\varepsilon^\#_{\Omega,j})$, Lemma \ref{only} implies that $\frac{1}{1+\lambda_j(\varepsilon)}$ is the only eigenvalue of $\mathcal A_{\varepsilon}$ which belongs to the segment $\left[\frac{1}{1+\mu_j}-r^*_j,\frac{1}{1+\mu_j}+r^*_j\right]$. In addition $\lambda_j(\varepsilon)$ is simple for $\varepsilon<\varepsilon_{\Omega,j}^\#$ (because $\varepsilon_{\Omega,j}^\#\le \varepsilon_{\Omega,j})$. It follows that $u^*_\varepsilon$ coincides with the only eigenfunction with norm one corresponding to $\lambda_j(\varepsilon)$, namely
$u^*_\varepsilon=\frac{u_{j,\varepsilon}}{\|u_{j,\varepsilon}\|_{\varepsilon}}$. 
Thus by \eqref{u*ineq} we have
\begin{equation}\label{intermediate.eq3}
\left\|\frac{u_{j,\varepsilon}}{\|u_{j,\varepsilon}\|_{\varepsilon}}- \frac{u_j+\varepsilon v_{j,\varepsilon}}{\|u_j+\varepsilon v_{j,\varepsilon}\|_{\varepsilon}}\right\|_{\varepsilon}\leq \frac{2C_2}{r^*_j}\varepsilon\qquad\forall\varepsilon\in(0,\varepsilon^\#_{\Omega,j}).
\end{equation}
We plan to prove that \eqref{intermediate.eq3} implies that
\begin{equation}\label{ineq_eigenfunctions_step1}
\|u_{j,\varepsilon}-u_j-\varepsilon v_{j,\varepsilon}\|_{L^2(\Omega)}\leq C_3 \varepsilon\qquad\forall\varepsilon\in(0,\varepsilon^\#_j)
\end{equation}
for some $C_3>0$. Then the validity of \eqref{intermediate.eq2} will follow by Proposition \ref{vjeL2}. To do so, we observe that by \eqref{uecondition} we have
\begin{equation}\label{intermediate.eq4}
\norm{u_{j,\varepsilon}}_\varepsilon=\sqrt\frac{M}{|\partial\Omega|}(1+\lambda_j(\varepsilon))^{\frac{1}{2}}\qquad\forall\varepsilon\in(0,\varepsilon^\#_{\Omega,j}).
\end{equation}
It follows that 

\begin{multline}\label{intermediate.eq45}
\norm{u_{j,\varepsilon}}_\varepsilon-\norm{u_{j}+\varepsilon v_{j,\varepsilon}}_\varepsilon\\
=\left(\sqrt\frac{M}{|\partial\Omega|}(1+\lambda_j(\varepsilon))^{\frac{1}{2}}-\sqrt\frac{M}{|\partial\Omega|}(1+\mu_j)^{\frac{1}{2}}\right)+\left(\sqrt\frac{M}{|\partial\Omega|}(1+\mu_j)^{\frac{1}{2}}-\norm{u_{j}+\varepsilon v_{j,\varepsilon}}_\varepsilon\right)
\end{multline}

Then a computation based on \eqref{intermediate.eq1} and on Lemma \ref{ultimolemmastep1} shows that
\begin{equation}\label{intermediate.eq5}
\norm{u_{j,\varepsilon}}_\varepsilon-\norm{u_{j}+\varepsilon v_{j,\varepsilon}}_\varepsilon< C_4\, \varepsilon\qquad\forall\varepsilon\in(0,\varepsilon^\#_{\Omega,j})
\end{equation}
for some $C_4>0$.
Now we note that

\begin{multline}\label{intermediate.eq6}
\|u_{j,\varepsilon}-u_j-\varepsilon v_{j,\varepsilon}\|_{\varepsilon}=\norm{\|u_{j,\varepsilon}\|_{\varepsilon}\frac{u_{j,\varepsilon}}{\|u_{j,\varepsilon}\|_{\varepsilon}}-\|u_j+\varepsilon v_{j,\varepsilon}\|_{\varepsilon} \frac{u_j+\varepsilon v_{j,\varepsilon}}{\|u_j+\varepsilon v_{j,\varepsilon}\|_{\varepsilon}}}_\varepsilon\\
\le \norm{u_{j,\varepsilon}}_\varepsilon\norm{\frac{u_{j,\varepsilon}}{\|u_{j,\varepsilon}\|_{\varepsilon}}- \frac{u_j+\varepsilon v_{j,\varepsilon}}{\|u_j+\varepsilon v_{j,\varepsilon}\|_{\varepsilon}}}_\varepsilon+\left|\norm{u_{j,\varepsilon}}_\varepsilon-\norm{u_{j}+\varepsilon v_{j,\varepsilon}}_\varepsilon\right|
\end{multline}
Hence, by \eqref{intermediate.eq3}, \eqref{intermediate.eq4}, and \eqref{intermediate.eq5} we deduce that 
\[
\|u_{j,\varepsilon}-u_j-\varepsilon v_{j,\varepsilon}\|_\varepsilon\leq C_5\, \varepsilon\qquad\forall\varepsilon\in(0,\varepsilon^\#_j)
\]
for some $C_5>0$. Now the validity of \eqref{ineq_eigenfunctions_step1} follows by Proposition \ref{L2<eps}.
 \qed

%%%%%%%%%%%%%%%%%%%%%%%%%%%%%%%%%%%%%%%%%%%%%%%%%%%%%%%%%%%%%%%%%%%%%%%%%%%%%%%%%%%%%%%%%%%%%%%%%%%%%%%%%%%%%%%%%%%%%%%%%%%%%
%%%%%%%%%%%%%%%%%%%%%%%%%%%%%%%%%%%%%%%%%%%%%%%%%%%%%%%%%%%%%%%%%%%%%%%%%%%%%%%%%%%%%%%%%%%%%%%%%%%%%%%%%%%%%%%%%%%%%%%%%%%%%
%%%%%%%%%%%%%%%%%%%%%%%%%%%%%%%%%%%%%%%%% STEP TWO %%%%%%%%%%%%%%%%%%%%%%%%%%%%%%%%%%%%%%%%%%%%%%%%%%%%%%%%%%%%%%%
%%%%%%%%%%%%%%%%%%%%%%%%%%%%%%%%%%%%%%%%%%%%%%%%%%%%%%%%%%%%%%%%%%%%%%%%%%%%%%%%%%%%%%%%%%%%%%%%%%%%%%%%%%%%%%%%%%%%%%%%%%%%%
%%%%%%%%%%%%%%%%%%%%%%%%%%%%%%%%%%%%%%%%%%%%%%%%%%%%%%%%%%%%%%%%%%%%%%%%%%%%%%%%%%%%%%%%%%%%%%%%%%%%%%%%%%%%%%%%%%%%%%%%%%%%%

\section{Second Step}\label{sec:5}

In this section we complete the  proof of Theorems \ref{asymptotic_eigenvalues} and \ref{asymptotic_eigenfunctions}. Accordingly, we fix $j\in\mathbb N$ and we take $\mu_j$, $\mu_j^1$, $u_j$, $\varepsilon_{\Omega,j}$,  $\lambda_j(\varepsilon)$, and $u_{j,\varepsilon}$ as in the statements of Theorems \ref{asymptotic_eigenvalues} and \ref{asymptotic_eigenfunctions}. 

We denote by   $u_j^1$ the unique solution in $H^1(\Omega)$ of the boundary value problem
\begin{equation}\label{u1_problem_simple_true}
\begin{cases}
-\Delta u_j^1=\mu_j u_j, & {\rm in}\ \Omega,\\
\partial_{\nu}u_j^1-\frac{M\mu_j}{|\partial\Omega|}u_j^1=\left(\frac{M\mu_j}{2|\partial\Omega|^2}(K-|\partial\Omega|\kappa{\circ\gamma^{(-1)}})-\frac{2M^2\mu_j^2}{3|\partial\Omega|^2}-\frac{|\Omega|\mu_j}{|\partial\Omega|}\right)u_j+\frac{M\mu_j^1}{|\partial\Omega|}u_j, & {\rm on}\ \partial\Omega,
\end{cases}
\end{equation}
which satisfies the additional condition
\begin{equation}\label{uj1condition}
\int_{\partial\Omega}u_j^1 u_j d\sigma=\left(\frac{\mu_j^1}{2\mu_j}+\frac{M\mu_j}{3|\partial\Omega|}\right)\,.
\end{equation}
The existence and uniqueness of  $u_j^1$ is a consequence of Proposition \ref{A1} in the Appendix. Then we introduce the auxiliary function $w_j^1(s,\xi)$ from $[0,|\partial\Omega|)\times [0,1]$ to $\mathbb R$ defined by
\begin{equation}\label{w1}
\begin{split}
w_j^1(s,\xi)&:=-\frac{\kappa(s)M\mu_j}{6|\partial\Omega|}(u_j\circ\psi_{\varepsilon}(s,0))(\xi-1)^3\\
&\quad+\frac{M^2\mu_j^2}{24|\partial\Omega|^2}(u_j\circ\psi_{\varepsilon}(s,0))(\xi^2+2\xi+9)(\xi-1)^2\\
&\quad+\left(\frac{|\Omega|\mu_j}{2|\partial\Omega|}(u_j\circ\psi_{\varepsilon}(s,0))-\frac{M}{2|\partial\Omega|}(\mu_j (u_j^1\circ\psi_{\varepsilon}(s,0))\right.\\
&\qquad\qquad \left.+\mu_j^1 (u_j\circ\psi_{\varepsilon}(s,0)))-\frac{KM\mu_j}{4|\partial\Omega|^2}(u_j\circ\psi_{\varepsilon}(s,0))\right)(\xi-1)^2,
\end{split}
\end{equation}
for all $(s,\xi)\in[0,|\partial\Omega|)\times[0,1]$ (see \eqref{K} for the definition of $K$). A straightforward computation shows that {}
\[
\begin{split}
&-\partial^2_{\xi}w_j^1(s,\xi)\\
&=-\kappa(s)\partial_{\xi}w_j(s,\xi)+\frac{M}{|\partial\Omega|}\bigg(\mu_j (u_j^1\circ\psi_{\varepsilon}(s,0))+\mu_jw_j(s,\xi)+\mu_j^1(u_j\circ\psi_{\varepsilon}(s,0))\\
&\qquad\qquad-\xi\frac{M\mu_j^2}{|\partial\Omega|}(u_j\circ\psi_{\varepsilon}(s,0))-\frac{|\Omega|\mu_j}{M}(u_j\circ\psi_{\varepsilon}(s,0))+\frac{K\mu_j}{2|\partial\Omega|}(u_j\circ\psi_{\varepsilon}(s,0))\bigg) 
\end{split}
\]
for all $(s,\xi)\in[0,\partial\Omega)\times (0,1)$. Moreover, $w_j^1$ satisfies
\begin{equation*}
w_j^1(s,1)=\partial_{\xi}w_j^1(s,1)=0
\end{equation*}
for all $s\in[0,|\partial\Omega|)$. Then, for all $\varepsilon\in(0,\varepsilon_{\Omega,j})$ we denote by $v_{j,\varepsilon}^1\in H^1(\Omega)$ the extension by $0$ of $w_j^1\circ\psi_{\varepsilon}^{(-1)}$ to $\Omega$. We note that by construction $v_{j,\varepsilon}^1\in H^1(\Omega)$. We also observe that the $L^2(\Omega)$ norm of $v_{j,\varepsilon}^1$ is in $O(\sqrt{\varepsilon})$ as $\varepsilon\rightarrow 0$. Indeed, we have the following proposition.

\begin{proposition}\label{vj1eL2} There is a constant $C>0$ such that $\|v_{j,\varepsilon}^1\|_{L^2(\Omega)}\le C\sqrt\varepsilon$ for all $\varepsilon\in(0,\varepsilon_{\Omega,j})$.
\end{proposition}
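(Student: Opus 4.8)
The plan is to mimic exactly the proof of Proposition~\ref{vjeL2}, since $v_{j,\varepsilon}^1$ is defined in the very same way as $v_{j,\varepsilon}$, just with $w_j^1$ in place of $w_j$. The only thing that needs checking is that $w_j^1$ belongs to $L^2([0,|\partial\Omega|)\times(0,1))$, so that the analogous estimate goes through with the constant $\|w_j^1\|_{L^2([0,|\partial\Omega|)\times(0,1))}$ in place of $\|w_j\|_{L^2([0,|\partial\Omega|)\times(0,1))}$.

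\medskip

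\noindent{\em Proof of Proposition \ref{vj1eL2}.}
Since $v_{j,\varepsilon}^1$ is the extension by $0$ of $w_j^1\circ\psi_{\varepsilon}^{(-1)}$ to $\Omega$, the rule of change of variables in integrals gives
\[
\begin{split}
\int_{\omega_{\varepsilon}}(v_{j,\varepsilon}^1)^2 dx&=\varepsilon\int_0^{|\partial\Omega|}\int_0^1(w_j^1(s,\xi))^2(1-\varepsilon\xi\kappa(s))\,d\xi ds\\
&\le \left(\|w_j^1\|_{L^2([0,|\partial\Omega|)\times(0,1))}^2\sup_{(s,\xi)\in[0,|\partial\Omega|)\times(0,\varepsilon)}|1-\varepsilon\xi\kappa(s)|\right)\varepsilon\,.
\end{split}
\]
It remains to observe that $\|w_j^1\|_{L^2([0,|\partial\Omega|)\times(0,1))}<+\infty$. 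Indeed, by elliptic regularity $u_j\in C^{2}(\overline\Omega)$ (see e.g., \cite{agmon1}), and by Proposition \ref{A1} in the Appendix combined with a standard elliptic regularity argument $u_j^1\in C^{1}(\overline\Omega)$; moreover $\gamma$ is of class $C^3$ and $\kappa$ is continuous on $[0,|\partial\Omega|)$. Hence, inspecting formula \eqref{w1}, the function $w_j^1$ is continuous and bounded on the compact set $[0,|\partial\Omega|]\times[0,1]$, so its $L^2$ norm over $[0,|\partial\Omega|)\times(0,1)$ is finite. Taking $C:=\|w_j^1\|_{L^2([0,|\partial\Omega|)\times(0,1))}\bigl(1+\varepsilon_{\Omega,j}\sup_{s\in[0,|\partial\Omega|)}|\kappa(s)|\bigr)^{1/2}$ and recalling that $v_{j,\varepsilon}^1$ vanishes outside $\omega_\varepsilon$ yields the desired inequality. \qed

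\medskip

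The only genuinely non-routine point is the claim that $u_j^1$ (and hence $w_j^1$) has enough regularity for $w_j^1$ to be bounded; this is where one has to appeal to the well-posedness and regularity afforded by Proposition~\ref{A1} in the Appendix, together with the $C^3$ regularity of $\Omega$ which propagates to $u_j$ and to the coordinate map $\psi_\varepsilon$. Everything else is the same bookkeeping already carried out for $v_{j,\varepsilon}$, and in fact $C^2$ or even just continuity of $w_j^1$ on the closed band is more than enough; one could alternatively avoid the regularity discussion altogether by noting directly from \eqref{w1} that $w_j^1$ is a polynomial in $\xi$ with coefficients that are bounded functions of $s$ (because they involve only the traces $u_j\circ\gamma$, $u_j^1\circ\gamma$ and the continuous function $\kappa$), which is manifestly square-integrable on $[0,|\partial\Omega|)\times(0,1)$.
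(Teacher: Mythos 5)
Your proof is correct and is precisely the argument the paper intends: the paper omits the proof of Proposition \ref{vj1eL2} with the remark that it is ``similar to that of Proposition \ref{vjeL2}'', and your change-of-variables computation together with the observation that $w_j^1$ is bounded on $[0,|\partial\Omega|]\times[0,1]$ (by the $C^2(\overline\Omega)$ regularity of $u_j$ and $u_j^1$, which the paper itself invokes later) is exactly that adaptation. No gaps.
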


The proof is similar to that of Proposition \ref{vjeL2} and it is accordingly omitted. We also observe that $\sqrt{\varepsilon}\|v_{j,\varepsilon}^1\|_{\varepsilon}$ is uniformly bounded for $\varepsilon\in(0,\varepsilon_{\Omega,j})$, as it is stated in the following proposition.

\begin{proposition}\label{vj2ee} There is a constant $C>0$ such that $\sqrt\varepsilon\norm{v_{j,\varepsilon}^1}_\varepsilon\le C$ for all $\varepsilon\in(0,\varepsilon_{\Omega,j})$.
\end{proposition}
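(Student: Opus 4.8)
The plan is to mimic the proof of Proposition \ref{vjee}, since $v_{j,\varepsilon}^1$ is built from $w_j^1$ in exactly the same way $v_{j,\varepsilon}$ was built from $w_j$. First I would write $\norm{v_{j,\varepsilon}^1}_\varepsilon^2=\int_{\omega_\varepsilon}\rho_\varepsilon (v_{j,\varepsilon}^1)^2\,dx+\int_{\omega_\varepsilon}|\nabla v_{j,\varepsilon}^1|^2\,dx$ and treat the two terms separately. For the first term, I would use $\rho_\varepsilon=\varepsilon+\frac1\varepsilon\tilde\rho_\varepsilon$ on $\omega_\varepsilon$ to get $\int_{\omega_\varepsilon}\rho_\varepsilon(v_{j,\varepsilon}^1)^2\,dx=\bigl(\varepsilon+\frac1\varepsilon\tilde\rho_\varepsilon\bigr)\norm{v_{j,\varepsilon}^1}_{L^2(\Omega)}^2$, and then invoke Proposition \ref{vj1eL2} together with the asymptotic expansion \eqref{asymptotic_rho} of $\tilde\rho_\varepsilon$ to conclude that this quantity is bounded by a constant independent of $\varepsilon\in(0,\varepsilon_{\Omega,j})$; this is the exact analogue of \eqref{vjee.eq2}.

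For the gradient term, I would apply the change-of-variables formula \eqref{grad2}: since $v_{j,\varepsilon}^1$ is the extension by zero of $w_j^1\circ\psi_\varepsilon^{(-1)}$,
\[
\int_{\omega_\varepsilon}|\nabla v_{j,\varepsilon}^1|^2\,dx=\frac1\varepsilon\int_0^{|\partial\Omega|}\!\!\int_0^1(\partial_\xi w_j^1)^2(1-\varepsilon\xi\kappa(s))\,d\xi\,ds+\varepsilon\int_0^{|\partial\Omega|}\!\!\int_0^1\frac{(\partial_s w_j^1)^2}{1-\varepsilon\xi\kappa(s)}\,d\xi\,ds\,.
\]
The crucial point is then to bound $|\partial_\xi w_j^1(s,\xi)|$ and $|\partial_s w_j^1(s,\xi)|$ uniformly on $[0,|\partial\Omega|)\times[0,1]$. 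From the explicit formula \eqref{w1}, $w_j^1$ is a polynomial in $\xi$ (of degree at most $5$) whose coefficients are built from $\kappa(s)$, $u_j\circ\gamma(s)$, $u_j^1\circ\gamma(s)$ and the constants $M,\mu_j,\mu_j^1,|\Omega|,|\partial\Omega|,K$; differentiating in $\xi$ keeps everything bounded. For the $s$-derivative I would note that $\Omega$ is of class $C^3$, so by elliptic regularity $u_j\in C^2(\overline\Omega)$, and by Proposition \ref{A1} (Appendix A) $u_j^1$ is regular enough that $s\mapsto u_j^1\circ\gamma(s)$ has a bounded derivative; together with $\gamma,\kappa\in C^1$ (indeed $\gamma\in C^3$, $\kappa\in C^1$) this gives $|\partial_\xi w_j^1|,|\partial_s w_j^1|\le C$ for a constant $C$ depending only on $\Omega$, $M$, $\mu_j$, $\mu_j^1$, $\norm{u_j}_{C^1(\overline\Omega)}$ and $\norm{u_j^1}_{C^1(\overline\Omega)}$. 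Feeding these bounds into the displayed identity, and using \eqref{positiveinf} to keep $(1-\varepsilon\xi\kappa(s))^{\pm1}$ bounded, yields $\int_{\omega_\varepsilon}|\nabla v_{j,\varepsilon}^1|^2\,dx\le C/\varepsilon$, the analogue of \eqref{vjee.eq3}. Combining the two estimates gives $\norm{v_{j,\varepsilon}^1}_\varepsilon^2\le C/\varepsilon$, i.e. $\sqrt\varepsilon\,\norm{v_{j,\varepsilon}^1}_\varepsilon\le C$.

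The only genuine obstacle, as opposed to routine bookkeeping, is ensuring that $s\mapsto u_j^1(\gamma(s))$ is Lipschitz (equivalently, that $u_j^1$ has a bounded gradient up to $\partial\Omega$): unlike $u_j$, the function $u_j^1$ is not harmonic but solves the inhomogeneous Steklov-type problem \eqref{u1_problem_simple_true}, so one must quote the regularity statement contained in (or following from) Proposition \ref{A1} — the right-hand side data in \eqref{u1_problem_simple_true} involve $\kappa\circ\gamma^{(-1)}$ and $u_j$, which are $C^1$ and $C^2$ respectively on $\partial\Omega$, hence $u_j^1\in H^1(\Omega)$ is in fact at least $C^1(\overline\Omega)$ by elliptic regularity for the oblique-derivative problem on a $C^3$ domain. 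Once this is in hand, the proof is formally identical to that of Proposition \ref{vjee}, and for this reason I would state it briefly and refer the reader to the argument there rather than reproducing every line.
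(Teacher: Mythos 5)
Your proposal is correct and follows exactly the route the paper intends: the paper omits this proof, stating only that it ``can be effected by following the footsteps of the proof of Proposition \ref{vjee}'', and your argument is precisely that adaptation, including the correct identification of the one non-trivial ingredient, namely that $u_j^1\in C^2(\overline\Omega)$ by elliptic regularity for problem \eqref{u1_problem_simple_true} (a fact the paper itself invokes later, in Section \ref{sec:5}, citing \cite{agmon1}). Nothing is missing.
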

The proof of Proposition \ref{vj2ee} can be effected by following  the footsteps of the proof of Proposition \ref{vjee} and it is accordingly omitted. 

Possibly choosing smaller values for $r^*_j$ and $\delta_j$, we have the following Lemma \ref{only2}, which is the analogue of Lemma \ref{only}.

\begin{lemma}\label{only2} There exist $\delta_j\in(0,\varepsilon_{\Omega,j})$ and $r^*_j>0$ such  that,  for all $\varepsilon\in(0,\delta_j)$ the only  eigenvalue of $\mathcal A_{\varepsilon}$ in the interval 
\[
\left[\frac{1}{1+{\mu_j+\varepsilon\mu_j^1}}-r^*_j,\frac{1}{1+{\mu_j+\varepsilon\mu_j^1}}+r^*_j\right]
\]
is $\frac{1}{1+\lambda_{j}(\varepsilon)}$.  
\end{lemma}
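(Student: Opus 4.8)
The plan is to mimic the proof of Lemma~\ref{only} almost verbatim, the only new ingredient being that the center of the interval is now the moving point $\frac{1}{1+\mu_j+\varepsilon\mu_j^1}$ rather than the fixed point $\frac{1}{1+\mu_j}$. First I would recall, as in Lemma~\ref{only}, that since $\mu_j$ is simple we have $\mu_j\neq\mu_{j\pm1}$, and since $\lambda_j(\varepsilon)$ is simple for $\varepsilon\in(0,\varepsilon_{\Omega,j})$ we have $\lambda_j(\varepsilon)\neq\lambda_{j\pm1}(\varepsilon)$ on that interval. Then by Theorem~\ref{convergence}~(i) we have $\lambda_{j\pm1}(\varepsilon)\to\mu_{j\pm1}$, hence $\frac{1}{1+\lambda_{j\pm1}(\varepsilon)}\to\frac{1}{1+\mu_{j\pm1}}$, which is bounded away from $\frac{1}{1+\mu_j}$. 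Since also $\frac{1}{1+\mu_j+\varepsilon\mu_j^1}\to\frac{1}{1+\mu_j}$ as $\varepsilon\to0$, a triangle-inequality argument gives a positive lower bound, uniform for small $\varepsilon$, on $\bigl|\frac{1}{1+\mu_j+\varepsilon\mu_j^1}-\frac{1}{1+\lambda_{j\pm1}(\varepsilon)}\bigr|$.

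Second, I would handle the remaining eigenvalues of $\mathcal A_\varepsilon$: every eigenvalue of $\mathcal A_\varepsilon$ other than $\frac{1}{1+\lambda_{j-1}(\varepsilon)}$, $\frac{1}{1+\lambda_j(\varepsilon)}$, $\frac{1}{1+\lambda_{j+1}(\varepsilon)}$ equals $\frac{1}{1+\lambda_k(\varepsilon)}$ for some $k\le j-2$ or $k\ge j+2$, and by monotonicity of the ordering these lie even farther from $\frac{1}{1+\mu_j}$ than the $j\pm1$ ones; combining with the previous step this yields a number $r^{**}_j>0$ and $\delta^{**}_j\in(0,\varepsilon_{\Omega,j})$ such that, for $\varepsilon\in(0,\delta^{**}_j)$, the only eigenvalue of $\mathcal A_\varepsilon$ within distance $r^{**}_j$ of $\frac{1}{1+\mu_j+\varepsilon\mu_j^1}$ is $\frac{1}{1+\lambda_j(\varepsilon)}$. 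Here one uses that $\bigl|\frac{1}{1+\mu_j+\varepsilon\mu_j^1}-\frac{1}{1+\lambda_j(\varepsilon)}\bigr|\to0$, which follows from \eqref{intermediate.eq1} of Proposition~\ref{intermediate} (in fact $\lambda_j(\varepsilon)-\mu_j=O(\varepsilon)$ already suffices). Finally, as indicated in the statement (``possibly choosing smaller values for $r^*_j$ and $\delta_j$''), I would set the new $r^*_j:=\min\{r^*_j,r^{**}_j\}$ and the new $\delta_j:=\min\{\delta_j,\delta^{**}_j\}$, noting that shrinking these constants only makes the earlier Lemmas~\ref{only}, \ref{C1} and the conclusion of Proposition~\ref{intermediate} remain valid, so no previously established estimate is lost.

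There is essentially no obstacle here: the argument is a routine continuity/separation-of-eigenvalues estimate, identical in structure to Lemma~\ref{only}. The only mild point to be careful about is bookkeeping of constants — one must make sure that the $r^*_j,\delta_j$ appearing in Section~\ref{sec:5} are understood to be the (possibly) smaller ones, so that Lemma~\ref{only2} and Lemma~\ref{only} hold simultaneously with the same symbols; this is exactly the content of the parenthetical remark preceding the statement. I would therefore present the proof in one short paragraph, writing ``The proof follows the footsteps of that of Lemma~\ref{only}, using in addition that $\tfrac{1}{1+\mu_j+\varepsilon\mu_j^1}\to\tfrac{1}{1+\mu_j}$ and $\tfrac{1}{1+\lambda_j(\varepsilon)}\to\tfrac{1}{1+\mu_j}$ as $\varepsilon\to0$ (cf.\ Theorem~\ref{convergence} and Proposition~\ref{intermediate}), and replacing $r^*_j,\delta_j$ by smaller values if necessary.''
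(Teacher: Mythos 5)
Your proposal is correct and matches the paper's intent exactly: the paper omits this proof, stating only that it ``is similar to that of Lemma \ref{only},'' and your argument is precisely that adaptation — the same continuity/separation argument, with the single extra observation that the moving center $\frac{1}{1+\mu_j+\varepsilon\mu_j^1}$ converges to $\frac{1}{1+\mu_j}$, handled by the triangle inequality and by shrinking $r^*_j$ and $\delta_j$ as the paper's parenthetical remark anticipates.
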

The proof of Lemma \ref{only2} is similar to that of Lemma \ref{only} and accordingly it is omitted.

We now consider the operator $\mathcal A_{\varepsilon}$ introduced in Section \ref{sec:2}. In order to complete the proof of Theorems \ref{asymptotic_eigenvalues} and \ref{asymptotic_eigenfunctions} we plan to apply Lemma \ref{lemma_fondamentale} to $\mathcal A_{\varepsilon}$ with
\[
 \text{$H=\mathcal H_{\varepsilon}(\Omega)$, $\eta=\frac{1}{1+\mu_j+\varepsilon\mu_j^1}$, $u=\frac{u_j+\varepsilon v_{j,\varepsilon}+\varepsilon u_j^1+\varepsilon^2 v_{j,\varepsilon}^1}{u_j+\varepsilon v_{j,\varepsilon}+\varepsilon u_j^1+\varepsilon^2 v_{j,\varepsilon}^1}$, and $r=C\varepsilon^2<r^*_j$,}
\]
where $C>0$ is a constant which does not depend on $\varepsilon$. As we did in Section \ref{sec:4}, we have to verify that the assumptions of Lemma \ref{lemma_fondamentale} are satisfied. We observe here that, due to Proposition \eqref{vj1eL2}, the $L^2$ the norm of $\varepsilon^2 v^1_{j,\varepsilon}$ is in $o(\varepsilon^2)$ and accordingly the term $\varepsilon^2 v^1_{j,\varepsilon}$ is negligible from the approximation \eqref{expansion_eigenfunctions}. However, since we will deduce \eqref{expansion_eigenfunctions} from a suitable approximation in $\|\cdot\|_\varepsilon$ norm (cf.~inequality \eqref{r*step2} below), we have to take into account also the contribution of $\varepsilon^2 v^1_{j,\varepsilon}$ (see also Proposition \ref{vj2ee}).

We begin with the following lemma.

\begin{lemma}
 There exists a constant $C_6>0$ such that
\small
\begin{equation}\label{condition_2}
\left|\left\langle\mathcal A_{\varepsilon}(u_j+\varepsilon v_{j,\varepsilon}+\varepsilon u_j^1+\varepsilon^2 v_{j,\varepsilon}^1)-\frac{1}{1+\mu_j+\varepsilon\mu_j^1}(u_j+\varepsilon v_{j,\varepsilon}+\varepsilon u_j^1+\varepsilon^2 v_{j,\varepsilon}^1),\varphi\right\rangle_{\varepsilon}\right|\leq C_6\varepsilon^2 \|\varphi\|_{\varepsilon},
\end{equation}
\normalsize
for all $\varphi\in \mathcal H_{\varepsilon}(\Omega)$ and for all $\varepsilon\in(0,\varepsilon_{\Omega,j})$.
\proof
By \eqref{bilinear_eps} and \eqref{A_eps} we have
\begin{equation}\label{eprod}
\begin{split}
&\left|\left\langle\mathcal A_{\varepsilon}(u_j+\varepsilon v_{j,\varepsilon}+\varepsilon u_j^1+\varepsilon^2 v_{j,\varepsilon}^1)-\frac{1}{1+\mu_j+\varepsilon\mu_j^1}(u_j+\varepsilon v_{j,\varepsilon}+\varepsilon u_j^1+\varepsilon^2 v_{j,\varepsilon}^1),\varphi\right\rangle_{\varepsilon}\right|\\
&=\frac{1}{|1+\mu_j+\varepsilon\mu_j^1|}\left|(\mu_j+\varepsilon\mu_j^1)\int_{\Omega}\rho_{\varepsilon}(u_j+\varepsilon v_{j,\varepsilon}+\varepsilon u_j^1+\varepsilon^2 v_{j,\varepsilon}^1)\varphi dx\right.\\
&\qquad\qquad\qquad\qquad\qquad\qquad\qquad\qquad\left.-\int_{\Omega}\nabla (u_j+\varepsilon v_{j,\varepsilon}+\varepsilon u_j^1+\varepsilon^2 v_{j,\varepsilon}^1)\cdot\nabla\varphi dx\right|.
\end{split}
\end{equation}
We consider the summands appearing in the absolute value on the right-hand side of equality \eqref{eprod} separately and we re-organize them in a more suitable way. We start with the terms involving $u_j$ and $u_j^1$. We have
\begin{equation}\label{xy0}
\mu_j\int_{\Omega}\rho_{\varepsilon}u_j\varphi dx=\varepsilon\mu_j\int_{\Omega}u_j\varphi dx+\mu_j\int_{\omega_{\varepsilon}}\frac{\tilde\rho_{\varepsilon}}{\varepsilon}u_j\varphi dx.
\end{equation}
By using \eqref{asymptotic_rho} we observe that
\begin{multline}\label{new1}
\mu_j\int_{\omega_{\varepsilon}}\frac{\tilde\rho_{\varepsilon}}{\varepsilon}u_j\varphi dx=\mu_j\int_{\omega_{\varepsilon}}\frac{M}{\varepsilon|\partial\Omega|}u_j\varphi dx+\mu_j\int_{\omega_{\varepsilon}}\frac{K M}{2 |\partial\Omega|^2}u_j\varphi dx\\
-\mu_j\int_{\omega_{\varepsilon}}\frac{|\Omega|}{|\partial\Omega|}u_j\varphi dx+\mu_j\,\varepsilon\,\tilde{R}(\varepsilon)\int_{\omega_{\varepsilon}}u_j\varphi dx.
\end{multline}
By the rule of change of variables in integrals we have for the first term in the right-hand side of \eqref{new1}
\begin{multline}\label{new2}
\mu_j\int_{\omega_{\varepsilon}}\frac{M}{\varepsilon|\partial\Omega|}u_j\varphi dx
=\mu_j\int_0^{|\partial\Omega|}\int_0^1\frac{M}{|\partial\Omega|}(u_j\circ\psi_{\varepsilon})(\varphi\circ\psi_{\varepsilon}) d\xi ds\\
-\mu_j\varepsilon\int_0^{|\partial\Omega|}\int_0^1\frac{M}{|\partial\Omega|}\xi\kappa(s)(u_j\circ\psi_{\varepsilon}(s,\xi))(\varphi\circ\psi_{\varepsilon}(s,\xi)) d\xi ds,
\end{multline}
while for the second term in the right-hand side of \eqref{new1} we have
\begin{multline}\label{new3}
\mu_j\int_{\omega_{\varepsilon}}\frac{K M}{2|\partial\Omega|^2}u_j\varphi dx
=\mu_j\varepsilon\int_0^{|\partial\Omega|}\int_0^1\frac{K M}{2|\partial\Omega|^2}(u_j\circ\psi_{\varepsilon})(\varphi\circ\psi_{\varepsilon}) d\xi ds\\
-\mu_j\varepsilon^2\int_0^{|\partial\Omega|}\int_0^1\frac{K M}{2|\partial\Omega|^2}\xi\kappa(s)(u_j\circ\psi_{\varepsilon}(s,\xi))(\varphi\circ\psi_{\varepsilon}(s,\xi)) d\xi ds.
\end{multline}
For the third term in the right-hand side of \eqref{new1} we have
\begin{multline}\label{new4}
-\mu_j\int_{\omega_{\varepsilon}}\frac{|\Omega|}{|\partial\Omega|}u_j\varphi dx
=-\mu_j\varepsilon\int_0^{|\partial\Omega|}\int_0^1\frac{|\Omega|}{|\partial\Omega|}(u_j\circ\psi_{\varepsilon})(\varphi\circ\psi_{\varepsilon}) d\xi ds\\
-\mu_j\varepsilon^2\int_0^{|\partial\Omega|}\int_0^1\frac{|\Omega|}{|\partial\Omega|}\xi\kappa(s)(u_j\circ\psi_{\varepsilon}(s,\xi))(\varphi\circ\psi_{\varepsilon}(s,\xi)) d\xi ds.
\end{multline}
We set
\begin{equation}\label{rem1}
\begin{split}
R_1(\varepsilon)&:=\mu_j \,\varepsilon\,\tilde{R}(\varepsilon)\int_{\omega_{\varepsilon}}u_j\varphi dx\\
&\quad-\mu_j\varepsilon^2\int_0^{|\partial\Omega|}\int_0^1\frac{K M}{2|\partial\Omega|^2}\xi\kappa(s)(u_j\circ\psi_{\varepsilon}(s,\xi))(\varphi\circ\psi_{\varepsilon}(s,\xi)) d\xi ds\\
&\quad-\mu_j\varepsilon^2\int_0^{|\partial\Omega|}\int_0^1\frac{|\Omega|}{|\partial\Omega|}\xi\kappa(s)(u_j\circ\psi_{\varepsilon}(s,\xi))(\varphi\circ\psi_{\varepsilon}(s,\xi)) d\xi ds.
\end{split}
\end{equation}
Then, by \eqref{xy0}-\eqref{rem1}, we have
\begin{equation}\label{piece1}
\begin{split}
\mu_j\int_{\Omega}\rho_{\varepsilon}u_j\varphi dx&=\varepsilon\mu_j\int_{\Omega}u_j\varphi dx+\mu_j\int_0^{|\partial\Omega|}\int_0^1\frac{M}{|\partial\Omega|}(u_j\circ\psi_{\varepsilon})(\varphi\circ\psi_{\varepsilon}) d\xi ds\\
&\quad-\mu_j\varepsilon\int_0^{|\partial\Omega|}\int_0^1\frac{M}{|\partial\Omega|}\xi\kappa(s)(u_j\circ\psi_{\varepsilon}(s,\xi))(\varphi\circ\psi_{\varepsilon}(s,\xi)) d\xi ds\\
&\quad+\mu_j\varepsilon\int_0^{|\partial\Omega|}\int_0^1\frac{K M}{2|\partial\Omega|^2}(u_j\circ\psi_{\varepsilon})(\varphi\circ\psi_{\varepsilon}) d\xi ds\\
&\quad-\mu_j\varepsilon\int_0^{|\partial\Omega|}\int_0^1\frac{|\Omega|}{|\partial\Omega|}(u_j\circ\psi_{\varepsilon})(\varphi\circ\psi_{\varepsilon}) d\xi ds+R_1(\varepsilon).
\end{split}
\end{equation}

In a similar way we observe that
\begin{equation}\label{piece2}
\varepsilon\mu_j\int_{\Omega}\rho_{\varepsilon}u_j^1\varphi dx=\varepsilon\mu_j\int_0^{|\partial\Omega|}\int_0^1\frac{M}{|\partial\Omega|}(u_j^1\circ\psi_{\varepsilon}{})(\varphi\circ\psi_{\varepsilon}{})d\xi ds+R_2(\varepsilon),
\end{equation}
where
\begin{equation}\label{rem2}
\begin{split}
R_2(\varepsilon)&:=\varepsilon^2\mu_j\int_{\Omega}u_j^1\varphi dx+\mu_j\varepsilon\int_{\omega_{\varepsilon}}\left(\frac{KM}{2|\partial\Omega|}-\frac{|\Omega|}{|\partial\Omega|}+\varepsilon\tilde R(\varepsilon)\right)u_j^1\varphi dx\\
&\quad-\varepsilon^2\mu_j\int_0^{|\partial\Omega|}\int_0^1\frac{M}{|\partial\Omega|}(u_j^1\circ\psi_{\varepsilon}(s,\xi))(\varphi\circ\psi_{\varepsilon}(s,\xi))\xi\kappa(s)d\xi ds.
\end{split}
\end{equation}
We also observe that
\begin{equation}\label{piece3}
\varepsilon\mu_j^1\int_{\Omega}\rho_{\varepsilon}u_j\varphi dx=\varepsilon\mu_j^1\int_0^{|\partial\Omega|}\int_0^1\frac{M}{|\partial\Omega|}(u_j\circ\psi_{\varepsilon}(s,\xi))(\varphi\circ\psi_{\varepsilon}(s,\xi))d\xi ds+R_3(\varepsilon),
\end{equation}
where
\begin{equation*}
\begin{split}
R_3(\varepsilon)&:=\varepsilon^2\mu_j^1\int_{\Omega}u_j\varphi dx+\mu_j^1\varepsilon\int_{\omega_{\varepsilon}}\left(\frac{KM}{2|\partial\Omega|}-\frac{|\Omega|}{|\partial\Omega|}+\varepsilon\tilde R(\varepsilon)\right)u_j\varphi dx\\
&\quad-\varepsilon^2\mu_j^1\int_0^{|\partial\Omega|}\int_0^1\frac{M}{|\partial\Omega|}(u_j\circ\psi_{\varepsilon}(s,\xi))(\varphi\circ\psi_{\varepsilon}(s,\xi))\xi\kappa(s)d\xi ds.
\end{split}
\end{equation*}
We also find convenient to set
\begin{equation}\label{rem4}
R_4(\varepsilon):=\varepsilon^2\mu_j^1\int_{\Omega}u_j^1\varphi dx.
\end{equation}

Since $u_j$ is an eigenfunction of \eqref{Steklov} associated with the eigenvalue $\mu_j$, a standard argument based on the divergence theorem shows that
\begin{equation}\label{piece5}
\int_{\Omega}\nabla u_j\cdot\nabla\varphi dx=\int_{\partial\Omega}\frac{M\mu_j}{|\partial\Omega|}u_j\varphi d\sigma.
\end{equation}
Moreover, $u_j^1,\mu_j^1$ are solutions to problem \eqref{u1_problem_simple_true} and therefore
\begin{equation}\label{piece6}
\begin{split}
&\varepsilon\int_{\Omega}\nabla u_j^1\cdot\nabla\varphi dx\\
&\quad=\varepsilon\int_{\Omega}\mu_ju_j\varphi dx\\
&\qquad+\varepsilon\int_{\partial\Omega}\left(\frac{M\mu_j}{2|\partial\Omega|^2}(K-|\partial\Omega|\kappa(s))-\frac{2M^2\mu_j^2}{3|\partial\Omega|^2}+\frac{M\mu_j^1-|\Omega|\mu_j}{|\partial\Omega|}\right)u_j\varphi d\sigma\\
&\qquad+\varepsilon\int_{\partial\Omega}\frac{M\mu_j}{|\partial\Omega|}u_j^1\varphi d\sigma.
\end{split}
\end{equation}

Now we consider the terms involving $v_{j,\varepsilon}$ and $v_{j,\varepsilon}^1$. We have
\begin{equation}\label{sopra}
\varepsilon\mu_j\int_{\Omega}\rho_{\varepsilon}v_{j,\varepsilon}\varphi dx=\varepsilon^2\mu_j\int_{\omega_{\varepsilon}}v_{j,\varepsilon}\varphi dx+\mu_j\int_{\omega_{\varepsilon}}\tilde\rho_{\varepsilon}v_{j,\varepsilon}\varphi dx
\end{equation}
By the rule of change of variables in integrals and by \eqref{asymptotic_rho} we observe that for the second summand in the right-hand side of \eqref{sopra} it holds
\begin{equation*}
\begin{split}
&\mu_j\int_{\omega_{\varepsilon}}\tilde\rho_{\varepsilon}v_{j,\varepsilon}\varphi dx\\
&=\varepsilon\mu_j\int_0^{|\partial\Omega|}\int_0^1\frac{M}{|\partial\Omega|}w_j(\varphi\circ\psi_{\varepsilon})d\xi ds\\
&\quad-\varepsilon^2\mu_j \int_0^{|\partial\Omega|}\int_0^1\frac{M}{|\partial\Omega|}\kappa(s)\xi w_j(s,\xi)(\varphi\circ\psi_{\varepsilon}(s,\xi))d\xi ds\\
&\quad+\varepsilon^2\int_0^{|\partial\Omega|}\int_0^1\left(\frac{KM}{2|\partial\Omega|^2}-\frac{|\Omega|}{|\partial\Omega|}+\varepsilon\tilde R(\varepsilon)\right)w_j(s,\xi)(\varphi\circ\psi_{\varepsilon}(s,\xi))(1-\varepsilon\xi\kappa(s))d\xi ds.
\end{split}
\end{equation*}
Hence, by \eqref{w0} we write
\begin{equation}\label{piece7}
\begin{split}
&\varepsilon\mu_j\int_{\Omega}\rho_{\varepsilon}v_{j,\varepsilon}\varphi dx\\
&\qquad=-\varepsilon\mu_j\int_0^{|\partial\Omega|}\int_0^1\frac{M^2}{2|\partial\Omega|^2}(\xi-1)^2(u_j\circ\psi_{\varepsilon}(s,0))(\varphi\circ\psi_{\varepsilon}(s,\xi))d\xi ds+R_5(\varepsilon),
\end{split}
\end{equation}
where
\begin{equation*}
\begin{split}
&R_5(\varepsilon):=\varepsilon^2\mu_j\int_{\omega_{\varepsilon}}v_{j,\varepsilon}\varphi dx
-\varepsilon^2\mu_j \int_0^{|\partial\Omega|}\int_0^1\frac{M}{|\partial\Omega|}\kappa(s)\xi w_j(s,\xi)(\varphi\circ\psi_{\varepsilon}(s,\xi))d\xi ds\\
&+\varepsilon^2\int_0^{|\partial\Omega|}\int_0^1\left(\frac{KM}{2|\partial\Omega|^2}-\frac{|\Omega|}{|\partial\Omega|}+\varepsilon\tilde R(\varepsilon)\right)w_j(s,\xi)(\varphi\circ\psi_{\varepsilon}(s,\xi))(1-\varepsilon\xi\kappa(s))d\xi ds.
\end{split}
\end{equation*}

We also find convenient to set
\begin{equation}\label{rem5}
R_6(\varepsilon):=\varepsilon^2\int_{\Omega}\rho_{\varepsilon}(\mu_j v_{j,\varepsilon}^1+\mu_j^1v_{j,\varepsilon}+\varepsilon\mu_j^1 v_{j,\varepsilon}^1)\varphi dx.
\end{equation}

Now, by \eqref{grad2} and \eqref{w0}, by the theorem on change of variables in integrals, and  by integrating by parts with respect to the variable $\xi$ we have that
\begin{equation*}
\begin{split}
&\varepsilon\int_{\Omega}\nabla v_{j,\varepsilon}\cdot\nabla\varphi dx=\varepsilon\int_{\omega_{\varepsilon}}\nabla v_{j,\varepsilon}\cdot\nabla\varphi dx\\
&\quad=\varepsilon^2\int_0^{|\partial\Omega|}\int_0^1\Big(\frac{1}{\varepsilon^2}\partial_{\xi}w_j(s,\xi)\partial_{\xi}(\varphi\circ\psi_{\varepsilon})(s,\xi)\\
&\qquad+\frac{\partial_s w_j(s,\xi)\partial_s(\varphi\circ\psi_{\varepsilon})(s,\xi)}{(1-\varepsilon\xi\kappa(s))^2}\Big)(1-\varepsilon\xi\kappa(s))d\xi ds\\
&\quad=\int_0^{|\partial\Omega|}\int_0^1\frac{M\mu_j}{|\partial\Omega|}(u_j\circ\psi_{\varepsilon}(s,0))(\varphi\circ\psi_{\varepsilon}(s,\xi)) d\xi ds-\int_{\partial\Omega}\frac{M\mu_j}{|\partial\Omega|}u_j\varphi d\sigma\\
&\qquad-\varepsilon\int_0^{|\partial\Omega|}\int_0^1\frac{\mu_j M}{|\partial\Omega|}\kappa(s)\left(2\xi-1\right)(u_j\circ\psi_{\varepsilon}(s,0))(\varphi\circ\psi_{\varepsilon}(s,\xi))d\xi ds\\
&\qquad+\varepsilon^2\int_0^{|\partial\Omega|}\int_0^1 \left(\frac{\partial_s w_j(s,\xi)\partial_s(\varphi\circ\psi_{\varepsilon})(s,\xi)}{(1-\varepsilon\xi\kappa(s))^2}\right)(1-\varepsilon\xi\kappa(s))d\xi ds.
\end{split}
\end{equation*}
We write
\begin{equation}\label{piece8}
\begin{split}
&\varepsilon\int_{\Omega}\nabla v_{j,\varepsilon}\cdot\nabla\varphi dx\\
&\quad=\int_0^{|\partial\Omega|}\int_0^1\frac{M\mu_j}{|\partial\Omega|}(u_j\circ\psi_{\varepsilon}(s,0))(\varphi\circ\psi_{\varepsilon}(s,\xi)) d\xi ds-\int_{\partial\Omega}\frac{M\mu_j}{|\partial\Omega|}u_j\varphi d\sigma\\
&\qquad-\varepsilon\int_0^{|\partial\Omega|}\int_0^1\frac{\mu_j M}{|\partial\Omega|}\kappa(s)\left(2\xi-1\right)(u_j\circ\psi_{\varepsilon}(s,0))(\varphi\circ\psi_{\varepsilon}(s,\xi))d\xi ds+R_7(\varepsilon),
\end{split}
\end{equation}
where
\begin{equation*}
R_7(\varepsilon):=\varepsilon^2\int_0^{|\partial\Omega|}\int_0^1 \left(\frac{\partial_s w_j(s,\xi)\partial_s(\varphi\circ\psi_{\varepsilon})(s,\xi)}{(1-\varepsilon\xi\kappa(s))^2}\right)(1-\varepsilon\xi\kappa(s))d\xi ds.
\end{equation*}
Analogously, from \eqref{grad2}, \eqref{w1}, by a change of variables in the integrals and integrating by parts with respect to the variable $\xi$, we see that
\begin{equation}\label{new9}
\begin{split}
&\varepsilon^2\int_{\Omega}\nabla v_{j,\varepsilon}^1\cdot\nabla\varphi dx=\varepsilon^2\int_{\omega_{\varepsilon}}\nabla v_{j,\varepsilon}^1\cdot\nabla\varphi dx\\
&=-\varepsilon\int_{\partial\Omega}\left(\frac{M\mu_j(K-|\partial\Omega|\kappa)}{2|\partial\Omega|^2}-\frac{2M^2\mu_j^2}{3|\partial\Omega|^2}+\frac{M\mu_j^1-|\Omega|\mu_j}{|\partial\Omega|}\right)u_j\varphi d\sigma\\
&\quad-\varepsilon\int_{\partial\Omega}\frac{M\mu_j}{|\partial\Omega|}u_j^1\varphi d\sigma\\
&\quad+\varepsilon\int_0^{|\partial\Omega|}\int_0^1\frac{M\mu_j\kappa(s)}{|\partial\Omega|}(\xi-1)(u_j\circ\psi_{\varepsilon}(s,0))(\varphi\circ\psi_{\varepsilon}(s,\xi)) d\xi ds\\
&\quad+\varepsilon\int_0^{|\partial\Omega|}\int_0^1\frac{M}{|\partial\Omega|}\Big(\mu_j (u_j^1\circ\psi_{\varepsilon}(s,0))-\frac{M\mu_j^2}{2|\partial\Omega|}(\xi-1)^2 (u_j\circ\psi_{\varepsilon}(s,0))\\
&\quad\qquad\qquad\qquad+\mu_j^1 (u_j\circ\psi_{\varepsilon}(s,0))-\frac{M\mu_j^2}{|\partial\Omega|}\xi (u_j\circ\psi_{\varepsilon}(s,0))-\frac{|\Omega|\mu_j}{M}(u_j\circ\psi_{\varepsilon}(s,0))\\
&\quad\qquad\qquad\qquad+\frac{K\mu_j}{2|\partial\Omega|}(u_j\circ\psi_{\varepsilon}(s,0))\Big)(\varphi\circ\psi_{\varepsilon}(s,\xi)) d\xi ds+R_8(\varepsilon),
\end{split}
\end{equation}
where
\begin{equation*}
\begin{split}
R_8(\varepsilon)&:=\varepsilon^3\int_0^{|\partial\Omega|}\int_0^1 \left(\frac{\partial_s w_j^1(s,\xi)\partial_s(\varphi\circ\psi_{\varepsilon})(s,\xi)}{(1-\varepsilon\xi\kappa(s))^2}\right)(1-\varepsilon\xi\kappa(s))d\xi ds\\
&\quad-\varepsilon^2 \int_0^{|\partial\Omega|}\int_0^1 \xi\kappa(s)\partial_{\xi}w_j^1(s,\xi)\partial_{\xi}(\varphi\circ\psi_{\varepsilon})(s,\xi)d\xi ds.
\end{split}
\end{equation*}

From \eqref{piece1}, \eqref{piece2}, \eqref{piece3}, \eqref{rem4}, \eqref{piece5}, \eqref{piece6}, \eqref{piece7}, \eqref{rem5}, \eqref{piece8} and \eqref{new9}, and by a standard computation, it follows that the right-hand side of the equality in \eqref{eprod} equals
\begin{equation*}
\frac{1}{|1+\mu_j+\varepsilon\mu_j^1|}\left|I_{1,\varepsilon}+I_{2,\varepsilon}+I_{3,\varepsilon}+I_{4,\varepsilon}+I_{5,\varepsilon}+I_{6,\varepsilon}+I_{7,\varepsilon}\right|
\end{equation*}
with
\footnotesize
\begin{eqnarray}
I_{1,\varepsilon}&=&\frac{M\mu_j}{|\partial\Omega|}\int_0^{|\partial\Omega|}\int_0^1\Big((u_j\circ\psi_{\varepsilon}(s,\xi))-(u_j\circ\psi_{\varepsilon}(s,0))\nonumber\\
&&\ \ \ \ \ \ \ \ \ \ \ \ +\varepsilon\frac{M\mu_j}{|\partial\Omega|}\xi (u_j\circ\psi_{\varepsilon}(s,0))\Big)(\varphi\circ\psi_{\varepsilon}(s,\xi))d\xi ds\label{f1}\\
I_{2,\varepsilon}&=&-\varepsilon\frac{M\mu_j}{|\partial\Omega|}\int_0^{|\partial\Omega|}\int_0^1\left((u_j\circ\psi_{\varepsilon}(s,\xi))\right.\nonumber\\
&&\left.\ \ \ \ \ \ \ \ \ \ \ \ -(u_j\circ\psi_{\varepsilon}(s,0))\right)(\varphi\circ\psi_{\varepsilon}(s,\xi))\xi\kappa(s)d\xi ds\label{f2}\\
I_{3,\varepsilon}&=&\varepsilon\frac{\mu_j^1 M}{|\partial\Omega|}\int_0^{|\partial\Omega|}\int_0^1\left((u_j\circ\psi_{\varepsilon}(s,\xi))-(u_j\circ\psi_{\varepsilon}(s,0))\right)(\varphi\circ\psi_{\varepsilon}(s,\xi))d\xi ds\label{f3}\\
I_{4,\varepsilon}&=&\varepsilon\frac{M\mu_j}{|\partial\Omega|}\int_0^{|\partial\Omega|}\int_0^1\left((u_j^1\circ\psi_{\varepsilon}(s,\xi))-(u_j^1\circ\psi_{\varepsilon}(s,0))\right)(\varphi\circ\psi_{\varepsilon}(s,\xi))d\xi ds\label{f5}\\
I_{5,\varepsilon}&=&-\varepsilon\frac{\mu_j|\Omega|}{|\partial\Omega|}\int_0^{|\partial\Omega|}\int_0^1\left((u_j\circ\psi_{\varepsilon}(s,\xi))-(u_j\circ\psi_{\varepsilon}(s,0))\right)(\varphi\circ\psi_{\varepsilon}(s,\xi))d\xi ds\label{f6}\\
I_{6,\varepsilon}&=&\varepsilon\frac{\mu_j K M}{2|\partial\Omega|^2}\int_0^{|\partial\Omega|}\int_0^1\left((u_j\circ\psi_{\varepsilon}(s,\xi))-(u_j\circ\psi_{\varepsilon}(s,0))\right)(\varphi\circ\psi_{\varepsilon}(s,\xi))d\xi ds.\label{f7}\\
I_{7,\varepsilon}&=&\sum_{k=1}^8R_k(\varepsilon)\label{f8}.
\end{eqnarray}
\normalsize

To prove the validity of the lemma we will show that there exists $C>0$ such that
\begin{equation}\label{aim2}
|I_{k,\varepsilon}|\leq C\varepsilon^2\|\varphi\|_{\varepsilon}\ \ \ \forall\varepsilon\in(0,\varepsilon_{\Omega,j})\,,\varphi\in\mathcal H_{\varepsilon}(\Omega)
\end{equation}
for all $k\in\left\{1,...,7\right\}$. Through the rest of the proof we find convenient to denote by $C$ a positive constant which does not depend on $\varepsilon$ and $\varphi$ and which may be re-defined line by line.

Since $\Omega$ is assumed to be of class $C^{3}$, $u_j$ is a solution of \eqref{Steklov} and $u_j^1$ is a solution of \eqref{u1_problem_simple_true}, a classical elliptic regularity argument shows that $u_j,u_j^1\in C^{2}(\overline\Omega)$ (see e.g., \cite{agmon1}). Thus we conclude that the terms \eqref{f2}-\eqref{f7} can be bounded from above by $C\varepsilon^2\|\varphi\|_{\varepsilon}$ by the same argument used to study $J_{8,\varepsilon}$ in the proof of Lemma \ref{C1} (cf.~\eqref{J801}-\eqref{J8}). Hence \eqref{aim2} holds for $k\in\left\{2,...,6\right\}$.

Now we estimate $I_{1,\varepsilon}$ (cf.~\eqref{f1}). It is convenient to pass to the coordinates $(s,t)$ by the change of variables $x=\psi(s,t)$. From the regularity assumptions on $\Omega$ we have that $\psi$ is of class $C^2$ from $[0,|\partial\Omega|)\times(0,\varepsilon)$ to $\mathbb R^2$, for all $\varepsilon\in(0,\varepsilon_{\Omega,j})$. Thus $u_j\circ\psi$ is of class $C^2$ from $[0,|\partial\Omega|)\times(0,\varepsilon)$ to $\mathbb R$, for all $\varepsilon\in(0,\varepsilon_{\Omega,j})$.  Therefore, for each $(s,t)\in[0,\partial\Omega)\times(0,\varepsilon)$, there exist $t^*\in(0,t)$ and $t^{**}\in(0,t^*)$ such that
\begin{equation*}
(u_j\circ\psi)(s,t)-(u_j\circ\psi)(s,0)=t\partial_t(u_j\circ\psi)(s,t^*)
\end{equation*}
and
\begin{equation*}
\partial_t(u_j\circ\psi)(s,t^*)-\partial_t(u_j\circ\psi)(s,0)=t^*\partial^2_t(u_j\circ\psi)(s,t^{**}).
\end{equation*}
Moreover we note that $\partial_t u_j(\psi(s,0))=-\partial_{\nu} u_j(\gamma(s))=-\frac{\mu_j M}{|\partial\Omega|}u_j(\psi(s,0))$.
Then by \eqref{f1} we have
\[
\begin{split}
I_{1,\varepsilon}&=\frac{M\mu_j}{|\partial\Omega|}\int_0^{|\partial\Omega|}\frac{1}{\varepsilon}\int_0^{\varepsilon}t \bigl(\partial_t(u_j\circ\psi)(s,t^*)-\partial_t(u_j\circ\psi)(s,0)\bigr)(\varphi\circ\psi(s,t))dt ds\\
&=\frac{M\mu_j}{|\partial\Omega|}\int_0^{|\partial\Omega|}\frac{1}{\varepsilon}\int_0^{\varepsilon}t\,t^* \partial^2_t(u_j\circ\psi)(s,t^{**})(\varphi\circ\psi(s,t))dt ds.
\end{split}
\]
Then, by a computation based on the H\"older inequality, we verify that
\[
\begin{split}
\left|I_{1,\varepsilon}\right|&\leq\frac{M\mu_j}{|\partial\Omega|}\frac{1}{\varepsilon}\int_0^{|\partial\Omega|}\int_0^{\varepsilon}t^2\left|\partial^2_t (u_j\circ\psi)(s,t^{**})\right|\left|\varphi\circ\psi(s,t)\right|dt ds\\
&\leq\frac{M\mu_j }{|\partial\Omega|}\|u_j\|_{C^2(\overline\Omega)}\int_0^{|\partial\Omega|}\int_0^{\varepsilon}\frac{t^2}{\varepsilon^{\frac{1}{2}}}\frac{\left|\varphi\circ\psi(s,t)\right|}{\varepsilon^{\frac{1}{2}}}dt ds\\
&\leq\frac{M\mu_j }{|\partial\Omega|}\|u_j\|_{C^2(\overline\Omega)}\left(\int_0^{|\partial\Omega|}\int_0^{\varepsilon}\frac{t^4}{\varepsilon}dt ds\right)^{\frac{1}{2}}
\left(\int_0^{|\partial\Omega|}\int_0^{\varepsilon}\frac{\left(\varphi\circ\psi(s,t)\right)^2}{\varepsilon}dt ds\right)^{\frac{1}{2}},\\
&\leq\frac{M\mu_j C}{|\partial\Omega|\sqrt{5}}\varepsilon^2\|\varphi\|_{\varepsilon}
\end{split}
\]
where in the latter inequality we have use the argument of \eqref{J8}.
We conclude that \eqref{aim2} holds with $k=1$.

In order to complete the proof we have to estimate \eqref{f8}. Since $I_{7,\varepsilon}=\sum_{k=1}^8R_k(\varepsilon)$, we consider separately each $R_k(\varepsilon)$ and we start with $R_1(\varepsilon)$. By the H\"older's inequality we can prove the following estimates for the first term in the definition \eqref{rem1} of $R_1(\varepsilon)$, 
\begin{equation*}
\left|\mu_j \varepsilon\tilde{R}(\varepsilon)\int_{\omega_{\varepsilon}}u_j\varphi dx\right|\le\frac{\mu_j \varepsilon\tilde{R}(\varepsilon)}{\tilde\rho_{\varepsilon}}\int_{\omega_{\varepsilon}}\tilde\rho_{\varepsilon} \left|u_j\varphi \right|dx\leq \frac{\mu_j \varepsilon\tilde{R}(\varepsilon)}{\tilde\rho_{\varepsilon}}\|u_j\|_{\varepsilon}\|\varphi\|_{\varepsilon}\leq C\varepsilon^2\|\varphi\|_{\varepsilon}.
\end{equation*}
For the second summand in the right-hand side of \eqref{rem1}, a computation based on the rule of change of variables in integrals and on H\"older's inequality shows that
\begin{multline}\label{est_ch_var}
{\left|\mu_j\varepsilon^2\int_0^{|\partial\Omega|}\int_0^1\frac{K M}{2|\partial\Omega|^2}\xi\kappa(s)(u_j\circ\psi_{\varepsilon}(s,\xi))(\varphi\circ\psi_{\varepsilon}(s,\xi)) d\xi ds\right|}\\
{\leq \varepsilon^2\frac{C}{\tilde\rho_{\varepsilon}}\int_{\omega_{\varepsilon}}\frac{\tilde\rho_{\varepsilon}}{\varepsilon}\left|u_j\varphi \right| dx\leq C\varepsilon^2\|u_j\|_{\varepsilon}\|\varphi\|_{\varepsilon}.}
\end{multline}
Analogously, for the third summand in the right-hand side of \eqref{rem1} we have
\begin{multline*}
\left|\mu_j\varepsilon^2\int_0^{|\partial\Omega|}\int_0^1\frac{|\Omega|}{|\partial\Omega|}\xi\kappa(s)(u_j\circ\psi_{\varepsilon}(s,\xi))(\varphi\circ\psi_{\varepsilon}(s,\xi)) d\xi ds\right|\leq C\varepsilon^2\|u_j\|_{\varepsilon}\|\varphi\|_{\varepsilon}.
\end{multline*}
This proves that $\left|R_1(\varepsilon)\right|\leq C\varepsilon^2\|\varphi\|_{\varepsilon}$. Let us now consider $R_2(\varepsilon)$. By H\"older's inequality and by Proposition \ref{L2<eps}, one deduces the following inequality for the first term in the definition  \eqref{rem2} of $R_2(\varepsilon)$,
\begin{equation*}
\left|\varepsilon^2\mu_j\int_{\Omega} u_j^1\varphi dx\right|\leq C\varepsilon^2\|u_j^1\|_{L^2(\Omega)}\|\varphi\|_{L^2(\Omega)}\leq C\varepsilon^2\|\varphi\|_{\varepsilon}.
\end{equation*}
For the second summand in the right-hand side of \eqref{rem2} we observe that, by an argument based on the H\"older inequality, we have
\begin{multline*}
\left|\mu_j\varepsilon\int_{\omega_{\varepsilon}}\left(\frac{KM}{2|\partial\Omega|}-\frac{|\Omega|}{|\partial\Omega|}+\varepsilon\tilde R(\varepsilon)\right)u_j^1\varphi dx\right|\\
{\leq C\mu_j\frac{\varepsilon^2}{\tilde\rho_{\varepsilon}}\int_{\omega_{\varepsilon}}\frac{\tilde\rho_{\varepsilon}}{\varepsilon}\left|u^1_j\varphi \right| dx\leq C\varepsilon^2\|u^1_j\|_{\varepsilon}\|\varphi\|_{\varepsilon}\le C\varepsilon^2\|\varphi\|_{\varepsilon},}
\end{multline*}
where in the latter inequality we have used the fact that 
\begin{equation*}
\|u^1_j\|_{\varepsilon}\leq C\qquad\forall\varepsilon\in(0,\varepsilon_{\Omega,j})
\end{equation*}
for some $C>0$, a fact that can be proved by arguing as in \eqref{J1e_eq2}, \eqref{J1e_eq2.1}.

By a similar argument, we can also prove that the third summand in the right-hand side of \eqref{rem2} is smaller than $C\varepsilon^2\|\varphi\|_{\varepsilon}$. Hence, we deduce that
\[
\left|R_2(\varepsilon)\right|\leq C\varepsilon^2\|\varphi\|_{\varepsilon}.
\] 

The proof that  $|R_k(\varepsilon)|\leq C\varepsilon^2\|\varphi\|_{\varepsilon}$ for $k\in\left\{3,...,6\right\}$ can be effected by a straightforward modification of the prove that  $|R_k(\varepsilon)|\leq C\varepsilon^2\|\varphi\|_{\varepsilon}$ for $k\in\left\{1,2\right\}$ and by exploiting Lemmas \ref{vjeL2} and \ref{vj1eL2}.

We now  consider $R_7(\varepsilon)$. By integrating by parts with respect to the variable $s$, we have 
\[
\begin{split}
&R_7(\varepsilon)=\varepsilon^2\int_0^{|\partial\Omega|}\int_0^1\frac{\partial_s w_j(s,\xi)\partial_s(\varphi\circ\psi_{\varepsilon})(s,\xi)}{(1-\varepsilon\xi\kappa(s))} d\xi ds\\
&\ =-\varepsilon^2\int_0^{|\partial\Omega|}\int_0^1\left(\frac{(1-\varepsilon\xi\kappa(s))\partial^2_sw_j(s,\xi)+\varepsilon\xi\kappa'(s)\partial_sw_j(s,\xi)}{(1-\varepsilon\xi\kappa(s))^2}\right)(\varphi\circ\psi_{\varepsilon}(s,\xi))d\xi ds.
\end{split}
\]
Then, by \eqref{positiveinf} and since $\kappa$ and $\kappa'$ are bounded on $(0,|\partial\Omega|)$ (because $\Omega$ is of class $C^3$) we deduce that 
\[
\left|R_7(\varepsilon)\right|\leq C\varepsilon^2\int_0^{|\partial\Omega|}\int_0^1\left(\left|\partial^2_s w_j\right|+\left|\partial_sw_j\right|\right)\left|\varphi\right| d\xi ds.
\]
Hence, by the definition of $w_j$ in \eqref{w0} and by a computation based on the H\"older inequality (see also \eqref{est_ch_var}) we find that
\[
\left|R_7(\varepsilon)\right|\leq C\varepsilon^2\|u_j\|_{C^2(\overline\Omega)}\|\varphi\|_{\varepsilon}\,.
\] 
We conclude that $\left|R_7(\varepsilon)\right|\leq C\varepsilon^2\|\varphi\|_{\varepsilon}$. In a similar way one can show that $\left|R_8(\varepsilon)\right|\leq C\varepsilon^2\|\varphi\|_{\varepsilon}$. The proof of the lemma is now complete.

\endproof
\end{lemma}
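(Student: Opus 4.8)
The plan is to rewrite the left-hand side of \eqref{condition_2} by means of the identity $\langle\mathcal A_\varepsilon f,\varphi\rangle_\varepsilon=\int_\Omega\rho_\varepsilon f\varphi\,dx$ (cf.\ the proof of Proposition \ref{Ae}) and the definition \eqref{bilinear_eps} of $\langle\cdot,\cdot\rangle_\varepsilon$. Setting $\eta_\varepsilon:=\frac{1}{1+\mu_j+\varepsilon\mu_j^1}$ and $f_\varepsilon:=u_j+\varepsilon v_{j,\varepsilon}+\varepsilon u_j^1+\varepsilon^2 v_{j,\varepsilon}^1$, a short computation gives
\[
\bigl\langle\mathcal A_\varepsilon f_\varepsilon-\eta_\varepsilon f_\varepsilon,\varphi\bigr\rangle_\varepsilon=\eta_\varepsilon\Bigl((\mu_j+\varepsilon\mu_j^1)\int_\Omega\rho_\varepsilon f_\varepsilon\varphi\,dx-\int_\Omega\nabla f_\varepsilon\cdot\nabla\varphi\,dx\Bigr),
\]
so that, since $\eta_\varepsilon$ is bounded, it suffices to bound the quantity in parentheses by $C\varepsilon^2\|\varphi\|_\varepsilon$. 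First I would substitute $f_\varepsilon$ and expand every integral, splitting each over $\Omega\setminus\overline\omega_\varepsilon$ and over $\omega_\varepsilon$, passing on $\omega_\varepsilon$ to the curvilinear coordinates $(s,\xi)$ through $\psi_\varepsilon$, and using $|\det D\psi_\varepsilon|=\varepsilon(1-\varepsilon\xi\kappa(s))$, the gradient identity \eqref{grad2} (after integration by parts in $\xi$), and the expansion \eqref{asymptotic_rho} of $\tilde\rho_\varepsilon$.

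The crucial point is that $u_j$, $u_j^1$, $w_j$, $w_j^1$ and the number $\mu_j^1$ have been chosen precisely so that the terms of order $1$ and of order $\varepsilon$ in this expansion cancel. At order $1$, the fact that $u_j$ solves \eqref{Steklov} together with the defining problem \eqref{w0probl} for $w_j$ makes the boundary contribution $\int_{\partial\Omega}\frac{M\mu_j}{|\partial\Omega|}u_j\varphi\,d\sigma$ produced by $\int_\Omega\nabla f_\varepsilon\cdot\nabla\varphi$ cancel the one produced by $\mu_j\int_\Omega\rho_\varepsilon u_j\varphi$ through \eqref{asymptotic_rho}, leaving only matching bulk integrals over the band. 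At order $\varepsilon$, the curvature corrections coming from the Jacobian $(1-\varepsilon\xi\kappa(s))$ and from \eqref{grad2}, the $O(\varepsilon)$ coefficient in \eqref{asymptotic_rho}, the bulk term $\varepsilon\mu_j\int_\Omega u_j\varphi$, and the boundary data prescribed in \eqref{u1_problem_simple_true} are arranged to be matched by the boundary-layer correction $w_j^1$ (whose defining ODE and boundary conditions are recorded just before the lemma), by $\varepsilon u_j^1$, and by the choice of $\mu_j^1$ in \eqref{top_der_formula}. After all these cancellations one is left with finitely many remainder terms, each of which one then shows to be $O(\varepsilon^2\|\varphi\|_\varepsilon)$.

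To estimate those remainders I would use the three routine devices already employed in the proof of Lemma \ref{C1}. Terms of the type $\varepsilon^2\int_{\omega_\varepsilon}(\text{bounded function})\,g\,\varphi\,dx$ with $g\in\{u_j,u_j^1,w_j,w_j^1\}$ are handled by the H\"older inequality together with $\inf_{\varepsilon\in(0,\varepsilon_\Omega)}\tilde\rho_\varepsilon>0$ (cf.\ \eqref{eO}) and $\int_{\omega_\varepsilon}\rho_\varepsilon\varphi^2\,dx\le\|\varphi\|_\varepsilon^2$, using also that $\|u_j\|_\varepsilon,\|u_j^1\|_\varepsilon\le C$, which is proved as in \eqref{J1e_eq2}--\eqref{J1e_eq2.1}. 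Terms built from a difference $u_j\circ\psi_\varepsilon(s,\xi)-u_j\circ\psi_\varepsilon(s,0)$ (or a second-order Taylor remainder of it) gain one, respectively two, extra factors of $\varepsilon\xi$ from a Taylor expansion relying on $u_j,u_j^1\in C^2(\overline\Omega)$ --- which holds by elliptic regularity since $\Omega$ is of class $C^3$ --- and are then controlled exactly as $J_{8,\varepsilon}$ in \eqref{J801}--\eqref{J8}. Terms containing $\partial_s$-derivatives of $w_j$ or $w_j^1$ are treated by integrating by parts in $s$ and using the boundedness of $\kappa$ and $\kappa'$ on $[0,|\partial\Omega|)$, once more a consequence of the $C^3$ regularity of $\Omega$; the uniform bounds $\sqrt\varepsilon\|v_{j,\varepsilon}\|_\varepsilon\le C$ and $\sqrt\varepsilon\|v_{j,\varepsilon}^1\|_\varepsilon\le C$ from Propositions \ref{vjee} and \ref{vj2ee} are also needed. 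I expect the hard part to be not any single one of these estimates but rather the bookkeeping of the previous paragraph: one must check that, once the change of variables is carried out and the Jacobian and \eqref{grad2} are expanded, every term that is not already of order $\varepsilon^2$ is matched by a term generated by one of the tuned auxiliary problems, so that the claimed cancellations really take place.
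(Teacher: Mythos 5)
Your proposal follows essentially the same route as the paper: the same starting identity reducing the bracket to $(\mu_j+\varepsilon\mu_j^1)\int_\Omega\rho_\varepsilon f_\varepsilon\varphi\,dx-\int_\Omega\nabla f_\varepsilon\cdot\nabla\varphi\,dx$, the same passage to curvilinear coordinates with the Jacobian, \eqref{grad2}, \eqref{asymptotic_rho} and integration by parts in $\xi$, the same cancellation mechanism built into \eqref{w0probl}, \eqref{u1_problem_simple_true} and \eqref{top_der_formula}, and the same three estimation devices (H\"older against $\tilde\rho_\varepsilon$, second-order Taylor remainders handled as $J_{8,\varepsilon}$ — including the key cancellation of the first-order term via $\partial_\nu u_j=\frac{M\mu_j}{|\partial\Omega|}u_j$ — and integration by parts in $s$). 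The only difference is that the paper actually carries out the bookkeeping you flag as the hard part, organizing the residue into the explicit terms $I_{1,\varepsilon},\dots,I_{7,\varepsilon}$ and $R_1,\dots,R_8$; your plan is correct but that verification is asserted rather than performed.
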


In the next step we verify that $\|u_j+\varepsilon v_{j,\varepsilon}+\varepsilon u_j^1+\varepsilon^2 v_{j,\varepsilon}^1\|_{\varepsilon}^2-\frac{M}{|\partial\Omega|}\left(1+\mu_j+\varepsilon\mu_j^1\right)$ is in $O(\varepsilon^2)$ for $\varepsilon\rightarrow 0$. {}

\begin{lemma}\label{propedeutico_step2}

There exists a constant $C>0$ such that
\begin{equation*}
\left|\|u_j+\varepsilon v_{j,\varepsilon}+\varepsilon u_j^1+\varepsilon^2 v_{j,\varepsilon}^1\|_{\varepsilon}^2-\frac{M}{|\partial\Omega|}\left(1+\mu_j+\varepsilon\mu_j^1\right)\right|\leq C\varepsilon^2,\ \ \ \forall\varepsilon\in (0,\varepsilon_{\Omega,j}).
\end{equation*}
\proof
A straightforward computation shows that
$$
\|u_j+\varepsilon v_{j,\varepsilon}+\varepsilon u_j^1+\varepsilon^2 v_{j,\varepsilon}^1\|_{\varepsilon}^2=\sum_{k=1}^8N_{k,\varepsilon},
$$
where
\footnotesize

\[
\begin{split}
N_{1,\varepsilon}&:=\varepsilon\int_{\Omega}2\varepsilon u_ju_j^1+\varepsilon^2\left(u_j^1\right)^2dx+\int_{\Omega}\varepsilon^2|\nabla u_j^1|^2dx\\
&\quad+\varepsilon\int_{\omega_{\varepsilon}}\left(\varepsilon^2 v_{j,\varepsilon}^2+\varepsilon^4\left(v_{j,\varepsilon}^1\right)^2+2\varepsilon u_j v_{j,\varepsilon}+2\varepsilon^2 u_j^1 v_{j,\varepsilon}+2\varepsilon^3 u_j^1v_{j,\varepsilon}^1+2\varepsilon^2 u_j v_{j,\varepsilon}^1+2\varepsilon^3 v_{j,\varepsilon} v_{j,\varepsilon}^1\right)dx\\
&\quad+\int_{\omega_{\varepsilon}}\frac{\tilde\rho_{\varepsilon}}{\varepsilon}\left(\varepsilon^2\left(u_j^1\right)^2+\varepsilon^2 v_{j,\varepsilon}^2+\varepsilon^4\left(v_{j,\varepsilon}^1\right)^2+2\varepsilon^2 u_j^1 v_{j,\varepsilon}+2\varepsilon^3 u_j^1v_{j,\varepsilon}^1+2\varepsilon^2 u_j v_{j,\varepsilon}^1+2\varepsilon^3 v_{j,\varepsilon} v_{j,\varepsilon}^1\right)dx\\
&\quad+\int_{\omega_{\varepsilon}}\varepsilon^4|\nabla v_{j,\varepsilon}^1|^2+2\varepsilon^2\nabla u_j\cdot\nabla v_{j,\varepsilon}^1+2\varepsilon^2\nabla v_{j,\varepsilon}\nabla u_j^1+2\varepsilon^3\nabla v_{j,\varepsilon}\cdot\nabla v_{j,\varepsilon}^1+2\varepsilon^3\nabla u_j^1\cdot\nabla v_{j,\varepsilon}^1dx,\\
N_{2,\varepsilon}&:=\varepsilon\int_{\Omega}u_j^2dx,\\
N_{3,\varepsilon}&:=\int_{\omega_{\varepsilon}}\frac{\tilde\rho_{\varepsilon}}{\varepsilon}u_j^2 dx,\\
N_{4,\varepsilon}&:=2\int_{\omega_{\varepsilon}}\tilde\rho_{\varepsilon}u_j u_j^1 dx,\\
N_{5,\varepsilon}&:=2\int_{\omega_{\varepsilon}}\tilde\rho_{\varepsilon}u_j v_{j,\varepsilon} dx,\\
N_{6,\varepsilon}&:=\int_{\Omega}|\nabla u_j|^2+2\varepsilon\nabla u_j\cdot\nabla u_j^1\,dx,\\
N_{7,\varepsilon}&:=\varepsilon^2\int_{\omega_{\varepsilon}}|\nabla v_{j,\varepsilon}|^2dx,\\
N_{8,\varepsilon}&:=2\varepsilon\int_{\omega_{\varepsilon}}\nabla u_j\cdot\nabla v_{j,\varepsilon} dx.
\end{split}
\]\normalsize
We begin by considering $N_{1,\varepsilon}$. By standard elliptic regularity (see \cite{agmon1}) the functions $u_j$ and $u_j^1$ are of class $C^2$ on $\overline\Omega$. Then, by Propositions  \ref{vjeL2},  \ref{vjee}, \ref{vj1eL2},   and \ref{vj2ee}, and by a standard computation one shows that
\[
\left|N_{1,\varepsilon}\right|\le C\varepsilon^2
\]
for some $C>0$.

We now re-write the $N_{k,\varepsilon}$'s with  $k\in\left\{3,...,8\right\}$, in a more suitable way. 
We start with $N_{3,\varepsilon}$. By the  membership of $u_j$ in $C^{2}(\overline\Omega)$ and by the definition of the change of variable $\psi$, we deduce that the map from $[0,\varepsilon]$ to $\mathbb{R}$ which takes $t$ to $u_j\circ\psi(s,t)$ is of class $C^2$. Then, by the Taylor formula we have
\[
(u_j\circ\psi(s,{t}))^2=(u_j\circ\psi(s,0))^2+2tu_j(\psi(s,0))\partial_t(u_j\circ\psi)(s,0)+F(t)t^2\qquad\forall t\in[0,\varepsilon]\,,
\]
 where $F(t)\in C(\overline{[0,\varepsilon]})$. Since $u_j$ is a solution of \eqref{Steklov} it follows that
\begin{equation}\label{ujpsiTaylor}
(u_j\circ\psi(s,{t}))^2=(u_j\circ\psi(s,0))^2+2t\frac{M\mu_j}{|\partial\Omega|}(u_j\circ\psi(s,0))^2+F(t)t^2.
\end{equation}
Then, by the definition of $N_{3,\varepsilon}$, by \eqref{ujpsiTaylor}, and by the expansion \eqref{asymptotic_rho} of $\tilde\rho_{\varepsilon}$, we deduce that
\normalsize
\[
\begin{split}
N_{3,\varepsilon}&
=\int_0^{|\partial\Omega|}\int_0^{\varepsilon}\left(\frac{M}{\varepsilon|\partial\Omega|}+\frac{\frac{1}{2}KM-|\Omega||\partial\Omega|}{|\partial\Omega|^2}+{\varepsilon}\tilde{R}(\varepsilon)\right)\\
&\qquad\times \left((u_j\circ\psi(s,0))^2+2t\frac{M\mu_j}{|\partial\Omega|}(u_j\circ\psi(s,0))^2+C(t)t^2\right) (1-t\kappa(s))dtds
\end{split}
\]
\normalsize
From standard computations and recalling that $\int_{\partial\Omega}u_j^2 d\sigma=1$, it follows that 
$$
N_{3,\varepsilon}=\frac{M}{|\partial\Omega|}+\varepsilon\left(\frac{\frac{1}{2}KM-|\Omega||\partial\Omega|}{|\partial\Omega|^2}\right)-\varepsilon\frac{M^2\mu}{|\partial\Omega|^2}-\varepsilon\frac{M}{2|\partial\Omega|}\int_{\partial\Omega}u_j^2\kappa{\circ\gamma^{(-1)}} d\sigma+Q_{3,\varepsilon},
$$
where $Q_{3,\varepsilon}$ satisfies the inequality
$$
|Q_{3,\varepsilon}|\leq C\varepsilon^2
$$

By a similar computation and by exploiting \eqref{w0} and \eqref{uj1condition}, one can also verify that
$$
N_{4,\varepsilon}=\varepsilon \frac{2M}{|\partial\Omega|}\left(\frac{\mu_j^1}{2\mu_j}+\frac{M\mu_j}{3|\partial\Omega|}\right)+Q_{4,\varepsilon}
$$
with 
$$
|Q_{4,\varepsilon}|\leq C\varepsilon^2
$$
and that 
$$
N_{5,\varepsilon}=-\varepsilon\frac{M^2\mu_j}{3|\partial\Omega|^2}+Q_{5,\varepsilon},
$$
with
$$
|Q_{5,\varepsilon}|\leq C\varepsilon^2.
$$

Now we turn to consider $N_{6,\varepsilon}$. By a computation based on the divergence theorem and on equality $\partial_\nu u_j=\frac{M\mu_j}{|\partial\Omega|}u_j$, we find that
\[
\begin{split}
N_{6,\varepsilon}&=\frac{M\mu_j}{|\partial\Omega|}\int_{\partial\Omega}u_j^2d\sigma+2\varepsilon\frac{M\mu_j}{|\partial\Omega|}\int_{\partial\Omega}u_ju_j^1d\sigma+Q_{6,\varepsilon}\\
&=\frac{M\mu_j}{|\partial\Omega|}+\varepsilon\left(\frac{M\mu_j^1}{|\partial\Omega|}+\frac{2M^2\mu_j^2}{3|\partial\Omega|^2}\right)+Q_{6,\varepsilon},
\end{split}
\]
with
$$
Q_{6,\varepsilon}:=\varepsilon^2\int_{\Omega}|\nabla u_j^1|^2dx.
$$
Since  $u_j^1\in C^2(\overline\Omega)$, we deduce that
$$
|Q_{6,\varepsilon}|\leq C\varepsilon^2.
$$

Next we consider $N_{7,\varepsilon}$. By passing to coordinates $(s,\xi)$ in the definition of $N_{7,\varepsilon}$, and by using formulas \eqref{grad2} and \eqref{w0}, one shows that
$$
N_{7,\varepsilon}=\varepsilon\frac{M^2\mu_j^2}{3|\partial\Omega|^2}+Q_{7,\varepsilon}
$$
with
$$
|Q_{7,\varepsilon}|\leq C \varepsilon^2.
$$

Finally we consider $N_{8,\varepsilon}$. By the membership of $u_j$ in $C^2(\overline\Omega)$ and by equality $\partial_\nu u_j=\frac{M\mu_j}{|\partial\Omega|}u_j$ we have 
\[
\partial_\xi u_j\circ\psi_\varepsilon(s,\xi)=\varepsilon(\partial_\nu u_j)\circ\psi_\epsilon(s,0)+\epsilon^2\tilde U(s,\xi)=\varepsilon\frac{M\mu_j}{|\partial\Omega|}u_j\circ\psi_\epsilon(s,0)+\epsilon^2\tilde U(s,\xi),
\]
where $\tilde U$ is a continuous function on $[0,|\partial\Omega|]\times[0,1]$. Then, by passing to coordinates $(s,\xi)$ in the definition of $N_{8,\varepsilon}$, and by using formulas \eqref{grad2} and \eqref{w0}, one verifies that
$$
N_{8,\varepsilon}=-\varepsilon\frac{M^2\mu_j^2}{|\partial\Omega|^2}+Q_{8,\varepsilon},
$$
with
$$
|Q_{8,\varepsilon}|\leq C \varepsilon^2.
$$

Now we set $Q_\varepsilon:=N_{1,\varepsilon}+\sum_{k=3}^8 Q_{k,\varepsilon}$. A straightforward computation shows that
\begin{equation*}
\begin{split}
&\|u_j+\varepsilon v_{j,\varepsilon}+\varepsilon u_j^1+\varepsilon^2 v_{j,\varepsilon}^1\|_{\varepsilon}^2-\frac{M}{|\partial\Omega|}\left(1+\mu_j+\varepsilon\mu_j^1\right)\\
&\quad=\frac{M}{|\partial\Omega|}\Bigg[1+\mu_j+\varepsilon\Bigg(\frac{|\partial\Omega|}{M}\int_{\Omega}u_j^2dx\\
&\qquad-\frac{|\Omega|}{M}-\frac{2M\mu_j}{3|\partial\Omega|}+\frac{K}{2|\partial\Omega|}-\frac{1}{2}\int_{\partial\Omega}u_j^2\kappa{\circ\gamma^{(-1)}} d\sigma+\frac{\mu_j^1}{\mu_j}+\mu_j^1\Bigg)\Bigg]\\
&\qquad-\frac{M}{|\partial\Omega|}\left(1+\mu_j+\varepsilon\mu_j^1\right)+Q_\varepsilon.
\end{split}
\end{equation*}
We note that by \eqref{top_der_formula} we have
$$
\frac{\mu_j^1}{\mu_j}=-\frac{|\partial\Omega|}{M}\int_{\Omega}u_j^2dx+\frac{|\Omega|}{M}-\frac{2M\mu_j}{3|\partial\Omega|}-\frac{K}{2|\partial\Omega|}+\frac{1}{2}\int_{\partial\Omega}u_j^2\kappa\circ\gamma^{(-1)} d\sigma,
$$
therefore
$$
\|u_j+\varepsilon v_{j,\varepsilon}+\varepsilon u_j^1+\varepsilon^2 v_{j,\varepsilon}^1\|_{\varepsilon}^2-\frac{M}{|\partial\Omega|}\left(1+\mu_j+\varepsilon\mu_j^1\right)=Q_\varepsilon.
$$
The conclusion of the proof of the lemma follows by observing that $|Q_\varepsilon|\leq C\varepsilon^2$ for all $\varepsilon\in(0,\varepsilon_{\Omega,j})$.
\endproof
\end{lemma}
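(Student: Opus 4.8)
The plan is to expand $\|u_j+\varepsilon v_{j,\varepsilon}+\varepsilon u_j^1+\varepsilon^2 v_{j,\varepsilon}^1\|_{\varepsilon}^2$ by bilinearity of $\langle\cdot,\cdot\rangle_\varepsilon$ and to sort the (many) resulting summands into three groups: the zeroth--order part, which should reproduce $\frac{M}{|\partial\Omega|}(1+\mu_j)$; the first--order part, which must reproduce $\frac{M}{|\partial\Omega|}\mu_j^1$; and a remainder to be bounded by $C\varepsilon^2$. To compute each summand I would use the decomposition $\rho_\varepsilon=\varepsilon+\frac1\varepsilon\tilde\rho_\varepsilon\chi_{\omega_\varepsilon}$ together with the asymptotics \eqref{asymptotic_rho} of $\tilde\rho_\varepsilon$, pass to the curvilinear coordinates $(s,t)$ (or the rescaled variables $(s,\xi)$) via $\psi$ (resp.\ $\psi_\varepsilon$) in every integral supported in $\omega_\varepsilon$, and invoke \eqref{grad2} for the gradient terms.

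The remainder is the routine part. Any summand carrying a net factor $\varepsilon^2$ in front of a uniformly bounded integral is immediately $O(\varepsilon^2)$; here I would use Propositions \ref{vjeL2}, \ref{vjee}, \ref{vj1eL2}, \ref{vj2ee} to control the $L^2$-- and $\|\cdot\|_\varepsilon$--norms of $v_{j,\varepsilon}$ and $v_{j,\varepsilon}^1$, and the fact that $u_j,u_j^1\in C^2(\overline\Omega)$ by elliptic regularity (cf.\ \cite{agmon1}). This absorbs the summand collecting all products that are manifestly of higher order, as well as the higher--order remnants of the Taylor expansions used below.

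The heart of the argument is the exact evaluation of the order--$\varepsilon$ coefficient. For the mass terms I would Taylor--expand $t\mapsto(u_j\circ\psi(s,t))^2$ and the analogous products in $t$ around $t=0$, replace the first Taylor coefficient using the Steklov boundary condition $\partial_\nu u_j=\frac{M\mu_j}{|\partial\Omega|}u_j$, integrate in $t$ (or $\xi$) against the weights coming from \eqref{asymptotic_rho} and the Jacobian $1-t\kappa(s)$, and use the normalizations $\int_{\partial\Omega}u_j^2\,d\sigma=1$ and $\int_{\partial\Omega}u_ju_j^1\,d\sigma=\frac{\mu_j^1}{2\mu_j}+\frac{M\mu_j}{3|\partial\Omega|}$; the explicit formula \eqref{w0} for $w_j$ makes the strip integrals involving $v_{j,\varepsilon}$ elementary. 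For the Dirichlet terms I would use the divergence theorem on $\int_\Omega\nabla u_j\cdot\nabla(\cdot)$ together with the same boundary condition, and again \eqref{w0}. Collecting everything, the order--$\varepsilon$ coefficient comes out as $\frac{M}{|\partial\Omega|}$ times a combination of $\int_\Omega u_j^2$, $\int_{\partial\Omega}u_j^2\,\kappa\circ\gamma^{(-1)}\,d\sigma$, $K$, $\mu_j$, $\mu_j^1$, $|\Omega|$, $|\partial\Omega|$, $M$; comparing with the definition \eqref{top_der_formula} of $\mu_j^1$ (rearranged to isolate $\mu_j^1/\mu_j$) shows it equals exactly $\frac{M}{|\partial\Omega|}\mu_j^1$, so that it cancels against $\frac{M}{|\partial\Omega|}(1+\mu_j+\varepsilon\mu_j^1)$ and only the $O(\varepsilon^2)$ remainder survives. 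I expect the main difficulty to be purely the bookkeeping in this step: keeping track of which contributions in each summand are genuinely of order $\varepsilon$, of the $O(\varepsilon)$ corrections in \eqref{asymptotic_rho} and of the Jacobian factors, and---most delicately---recognizing that the normalization \eqref{uj1condition} of $u_j^1$ is tailored precisely so that the cross terms $\int_{\partial\Omega}u_ju_j^1\,d\sigma$ combine to the value dictated by \eqref{top_der_formula}.
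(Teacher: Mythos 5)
Your proposal follows essentially the same route as the paper's proof: expand $\|u_j+\varepsilon v_{j,\varepsilon}+\varepsilon u_j^1+\varepsilon^2 v_{j,\varepsilon}^1\|_{\varepsilon}^2$ by bilinearity, absorb all manifestly higher-order cross terms into an $O(\varepsilon^2)$ remainder via Propositions \ref{vjeL2}--\ref{vj2ee} and the $C^2$ regularity of $u_j,u_j^1$, evaluate the remaining terms to order $\varepsilon$ by Taylor expansion in the normal variable combined with the Steklov boundary condition, the expansion \eqref{asymptotic_rho}, the Jacobian $1-t\kappa(s)$, the formula \eqref{w0}, the divergence theorem, and the normalizations \eqref{ucondition} and \eqref{uj1condition}, and finally cancel the order-$\varepsilon$ coefficient against $\frac{M}{|\partial\Omega|}\varepsilon\mu_j^1$ using \eqref{top_der_formula}. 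This is exactly the structure of the paper's argument (its terms $N_{1,\varepsilon},\dots,N_{8,\varepsilon}$), and you correctly single out the role of \eqref{uj1condition} as the point where the bookkeeping is tailored to make the cancellation work.
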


We are now ready to prove Theorems \ref{asymptotic_eigenvalues} and \ref{asymptotic_eigenfunctions} by Lemma \ref{lemma_fondamentale}.

\noindent{\em Proof of Theorems \ref{asymptotic_eigenvalues} and \ref{asymptotic_eigenfunctions}.}
We first prove \eqref{expansion_eigenvalues}. By a standard continuity argument it follows that there exists $\varepsilon_{\mu_j^1}\in(0,\varepsilon_{\Omega,j})$ such that
$$
1+\mu_j+\varepsilon\mu_j^1>1
$$
for all $\varepsilon\in(0,\varepsilon_{\mu_j^1})$. By Lemma \ref{propedeutico_step2} there exists $\varepsilon_j^*\in(0,\varepsilon_{\mu_j^1})$ such that
$$
\|u_j+\varepsilon v_{j,\varepsilon}+\varepsilon u_j^1+\varepsilon^2 v_{j,\varepsilon}^1\|_{\varepsilon}>\frac{1}{2}\sqrt{\frac{M}{|\partial\Omega|}}\left(1+\mu_j+\varepsilon\mu_j^1\right)^{\frac{1}{2}}\ \ \ \forall\varepsilon\in(0,\varepsilon_j^*).
$$
By multiplying  both sides of \eqref{condition_2} by $\|u_j+\varepsilon v_{j,\varepsilon}+\varepsilon u_j^1+\varepsilon^2 v_{j,\varepsilon}^1\|_{\varepsilon}^{-1}$ we deduce that

\begin{equation}\label{plug_step2}
\begin{split}
&\left|\left\langle\mathcal A_{\varepsilon}\left(\frac{u_j+\varepsilon v_{j,\varepsilon}+\varepsilon u_j^1+\varepsilon^2 v_{j,\varepsilon}^1}{\|u_j+\varepsilon v_{j,\varepsilon}+\varepsilon u_j^1+\varepsilon^2 v_{j,\varepsilon}^1\|_{\varepsilon}}\right)\right.\right.\\
&\qquad\qquad\left.\left.-\frac{1}{1+\mu_j+\varepsilon\mu_j^1}\left(\frac{u_j+\varepsilon v_{j,\varepsilon}+\varepsilon u_j^1+\varepsilon^2 v_{j,\varepsilon}^1}{\|u_j+\varepsilon v_{j,\varepsilon}+\varepsilon u_j^1+\varepsilon^2 v_{j,\varepsilon}^1\|_{\varepsilon}}\right),\varphi\right\rangle_{\varepsilon}\right|\leq C_7\varepsilon^2 \|\varphi\|_{\varepsilon},
\end{split}
\end{equation}
for all $\varphi\in H^1(\Omega)$ and $\varepsilon\in(0,\varepsilon_j^*)$ with $C_7:=2\sqrt{\frac{|\partial\Omega|}{M}}(1+\mu_j+\varepsilon\mu_j^1)^{-\frac{1}{2}}C_6$. By taking 
\[
\varphi=\mathcal A_{\varepsilon}\left(\frac{u_j+\varepsilon v_{j,\varepsilon}+\varepsilon u_j^1+\varepsilon^2 v_{j,\varepsilon}^1}{\|u_j+\varepsilon v_{j,\varepsilon}+\varepsilon u_j^1+\varepsilon^2 v_{j,\varepsilon}^1\|_{\varepsilon}}\right)-\frac{1}{1+\mu_j+\varepsilon\mu_j^1}\left(\frac{u_j+\varepsilon v_{j,\varepsilon}+\varepsilon u_j^1+\varepsilon^2 v_{j,\varepsilon}^1}{\|u_j+\varepsilon v_{j,\varepsilon}+\varepsilon u_j^1+\varepsilon^2 v_{j,\varepsilon}^1\|_{\varepsilon}}\right)
\] in \eqref{plug_step2}, we obtain
\begin{multline*}
\left\|\mathcal A_{\varepsilon}\left(\frac{u_j+\varepsilon v_{j,\varepsilon}+\varepsilon u_j^1+\varepsilon^2 v_{j,\varepsilon}^1}{\|u_j+\varepsilon v_{j,\varepsilon}+\varepsilon u_j^1+\varepsilon^2 v_{j,\varepsilon}^1\|_{\varepsilon}}\right)\right.\\
\left.-\frac{1}{1+\mu_j+\varepsilon\mu_j^1}\left(\frac{u_j+\varepsilon v_{j,\varepsilon}+\varepsilon u_j^1+\varepsilon^2 v_{j,\varepsilon}^1}{\|u_j+\varepsilon v_{j,\varepsilon}+\varepsilon u_j^1+\varepsilon^2 v_{j,\varepsilon}^1\|_{\varepsilon}}\right)\right\|_{\varepsilon}
\leq C_7\varepsilon^2.
\end{multline*}
As a consequence, we see that the assumptions of Lemma \ref{lemma_fondamentale} hold with $A=\mathcal A_{\varepsilon}$, $H=\mathcal H_{\varepsilon}(\Omega)$, $\eta=\frac{1}{1+\mu_j+\varepsilon\mu_j^1}$, $u=\frac{u_j+\varepsilon v_{j,\varepsilon}+\varepsilon u_j^1+\varepsilon^2 v_{j,\varepsilon}^1}{\|u_j+\varepsilon v_{j,\varepsilon}+\varepsilon u_j^1+\varepsilon^2 v_{j,\varepsilon}^1\|_{\varepsilon}}$, $r=C_7\varepsilon^2$ with $\varepsilon\in(0,\varepsilon_j^*)$. Accordingly, for all $\varepsilon\in(0,\varepsilon_j^*)$ there exists an eigenvalue $\eta^*$ of $\mathcal A_{\varepsilon}$ such that
\begin{equation}\label{quasi_auto_2}
\left|\frac{1}{1+\mu_j+\varepsilon\mu_j^1}-\eta^*\right|\leq C_7\varepsilon^2.
\end{equation}
Now we take $\varepsilon^{\sharp}_{\Omega,j}:=\min\left\{\varepsilon_j^*,\delta_j,C_7^{-1}r_j^*\right\}$ with $\delta_j$ and $r_j^*$ as in Lemma \ref{only2}. By \eqref{quasi_auto_2} and Lemma \ref{only2}, the eigenvalue $\eta^*_{\varepsilon}$ has to coincide with $\frac{1}{1+\lambda_j(\varepsilon)}$ for all $\varepsilon\in(0,\varepsilon^{\sharp}_{\Omega,j})$. It follows that
\begin{equation*}
|\lambda_j(\varepsilon)-\mu_j-\varepsilon\mu_j^1|\leq C_7|(1+\mu_j+\mu_j^1\varepsilon)(1+\lambda_j(\varepsilon))|\varepsilon^2\ \ \ \forall\varepsilon\in (0,\varepsilon^{\sharp}_{\Omega,j}).
\end{equation*}
The validity of \eqref{expansion_eigenvalues} follows from Theorem \ref{convergence} and by a straightforward computation.

We now consider \eqref{expansion_eigenfunctions}. By Lemma \ref{lemma_fondamentale} with $r=r_j^*$ it follows that for all $\varepsilon\in(0,\varepsilon^{\sharp}_{\Omega,j})$, there exists a function $u_{\varepsilon}^*\in\mathcal H_{\varepsilon}(\Omega)$ with $\|u_{\varepsilon}^*\|_{\varepsilon}=1$ which belongs to the space generated by all the eigenfunctions of $\mathcal A_{\varepsilon}$ associated with eigenvalues contained in the segment $\left[\frac{1}{1+\mu_j+\varepsilon\mu_j^1}-r_j^*,\frac{1}{1+\mu_j+\varepsilon\mu_j^1}+r_j^*\right]$ and such that
\begin{equation}\label{unieq2}
\left\|u_{\varepsilon^*}-\frac{u_j+\varepsilon v_{j,\varepsilon}+\varepsilon u_j^1+\varepsilon^2 v_{j,\varepsilon}^1}{\|u_j+\varepsilon v_{j,\varepsilon}+\varepsilon u_j^1+\varepsilon^2 v_{j,\varepsilon}^1\|_{\varepsilon}}\right\|_{\varepsilon}\leq\frac{2C_7}{r_j^*}\varepsilon^2.
\end{equation}
Since $\varepsilon\in(0,\varepsilon_{\Omega,j}^{\sharp})$, Lemma \ref{only2} implies that $\frac{1}{1+\lambda_j(\varepsilon)}$ is the only eigenvalue of $\mathcal A_{\varepsilon}$ which belongs to the segment $\left[\frac{1}{1+\mu_j+\varepsilon\mu_j^1}-r_j^*,\frac{1}{1+\mu_j+\varepsilon\mu_j^1}+r_j^*\right]$. In addition $\lambda_j(\varepsilon)$ is simple for $\varepsilon<\varepsilon^{\sharp}_{\Omega,j}$ (because $\varepsilon^{\sharp}_{\Omega,j}\leq\varepsilon_{\Omega,j}$). It follows that $u_{\varepsilon}^*$ coincides with the only eigenfunction with norm one corresponding to $\lambda_j(\varepsilon)$, namely $u_{\varepsilon}^*=\frac{u_{j,\varepsilon}}{\|u_{j,\varepsilon}\|_{\varepsilon}}$. Thus by \eqref{unieq2}

\begin{equation}\label{r*step2}
\left\|\frac{u_{j,\varepsilon}}{\|u_{j,\varepsilon}\|_{\varepsilon}}-\frac{u_j+\varepsilon v_{j,\varepsilon}+\varepsilon u_j^1+\varepsilon^2 v_{j,\varepsilon}^1}{\|u_j+\varepsilon v_{j,\varepsilon}+\varepsilon u_j^1+\varepsilon^2 v_{j,\varepsilon}^1\|_{\varepsilon}}\right\|_{\varepsilon}\leq \frac{2C_7}{r_j^*}\varepsilon^2\ \ \ \forall\varepsilon\in(0,\varepsilon_j^{\sharp}).
\end{equation}
By exploiting \eqref{expansion_eigenvalues} and \eqref{r*step2} and by arguing as in the proof of \eqref{ineq_eigenfunctions_step1} (cf.~\eqref{intermediate.eq4}-\eqref{intermediate.eq6}), one can prove that
\begin{equation*}
\|u_{j,\varepsilon}-u_j-\varepsilon v_{j,\varepsilon}-\varepsilon u_j^1-\varepsilon^2v_{j,\varepsilon}^1\|_{L^2(\Omega)}\leq C_8\varepsilon^2,
\end{equation*}
for some $C_8>0$. Then the validity of \eqref{expansion_eigenfunctions} follows by Proposition \ref{vj1eL2}. This concludes the proof of Theorems \ref{asymptotic_eigenvalues} and \ref{asymptotic_eigenfunctions}.

\qed

%%%%%%%%%%%%%%%%%%%%%%%%%%%%%%%%%%%%%%%%%%%%%%%%%%%%%%%%%%%%%%%%%%%%%%%%%%%%%%%%%%%%%%%%%%%%%%%%%%%%%%%%%%%%%%%%%%%%%%%%%%%%%
%%%%%%%%%%%%%%%%%%%%%%%%%%%%%%%%%%%%%%%%%%%%%%%%%%%%%%%%%%%%%%%%%%%%%%%%%%%%%%%%%%%%%%%%%%%%%%%%%%%%%%%%%%%%%%%%%%%%%%%%%%%%%
%%%%%%%%%%%%%%%%%%%%%%%%%%%%%%%%%%%%%%%%% APPENDIX %%%%%%%%%%%%%%%%%%%%%%%%%%%%%%%%%%%%%%%%%%%%%%%%%%%%%%%%%%%%%%%
%%%%%%%%%%%%%%%%%%%%%%%%%%%%%%%%%%%%%%%%%%%%%%%%%%%%%%%%%%%%%%%%%%%%%%%%%%%%%%%%%%%%%%%%%%%%%%%%%%%%%%%%%%%%%%%%%%%%%%%%%%%%%
%%%%%%%%%%%%%%%%%%%%%%%%%%%%%%%%%%%%%%%%%%%%%%%%%%%%%%%%%%%%%%%%%%%%%%%%%%%%%%%%%%%%%%%%%%%%%%%%%%%%%%%%%%%%%%%%%%%%%%%%%%%%%

\begin{appendices}
\section{}
Let $u_j$ be the unique eigenfunction associated with a simple eigenvalue $\mu_j$ of problem \eqref{Steklov} such that $\int_{\partial\Omega}u_j^2d\sigma=1$. We consider the following problem
\begin{equation}\label{u1_problem_simple}
\left\{\begin{array}{ll}
-\Delta u=f, & {\rm in}\ \Omega,\\
\partial_{\nu}u-\frac{M\mu_j}{|\partial\Omega|}u=g_1+\lambda g_2, & {\rm on}\ \partial\Omega,
\end{array}\right.
\end{equation}
where $f\in L^2(\Omega)$, $g_1,g_2\in L^2(\partial\Omega)$ are given data which satisfy the condition $\int_{\partial\Omega}g_2u_jd\sigma\ne0$, and where the unknowns are the scalar $\lambda$ and the function $u$. The weak formulation of problem \eqref{u1_problem_simple} reads: find $(\lambda,u)\in\mathbb R\times H^1(\Omega)$ such that
\begin{equation}\label{u1_weak}
\int_{\Omega}\nabla u\cdot\nabla\varphi dx-\frac{M\mu_j}{|\partial\Omega|}\int_{\partial\Omega}u\varphi d\sigma=\int_{\Omega}f\varphi dx+\int_{\partial\Omega}g_1\varphi d\sigma+\lambda\int_{\partial\Omega}g_2\varphi d\sigma,
\end{equation}
for all $\varphi\in H^1(\Omega)$. We have the following proposition.

\begin{proposition}\label{A1}
Problem \eqref{u1_problem_simple} admits a weak solution $(u,\lambda)\in H^1(\Omega)\times\mathbb R$ if and only if
\begin{equation}\label{lambda_appendix}
\lambda=-\left(\int_{\Omega}fu_j dx+\int_{\partial\Omega}g_1u_j d\sigma\right)\left(\int_{\partial\Omega}g_2u_j d\sigma\right)^{-1}.
\end{equation}
Moreover, if  $u$ is a solution of \eqref{u1_problem_simple}, then any other solution of \eqref{u1_problem_simple} is given by $u +  \alpha\, u_j$ for some $\alpha\in\mathbb R$.
\proof
Let $\mathcal A_1$ be the operator from $H^1(\Omega)$ to $H^1(\Omega)'$ which takes $u\in H^1(\Omega)$ to the functional $\mathcal A_1[u]$ defined by
\begin{equation*}
\mathcal A_1[u][\varphi]:=\int_{\Omega}\nabla u\cdot\nabla\varphi dx+\int_{\partial\Omega}u\varphi d\sigma\,,\ \varphi\in H^1(\Omega).
\end{equation*} 
As is well-known,{} $\mathcal A_1$ is a homeomorphism from $H^1(\Omega)$ to $H^1(\Omega)'$. Then we consider the trace operator ${\rm Tr}$ from $H^1(\Omega)$ to $L^2(\partial\Omega)${}, and the operator $\mathcal J$ from $L^2(\partial\Omega)$ to $H^1(\Omega)'$ defined by
$$
\mathcal J[u][\varphi]:=\int_{\partial\Omega}u\,{\rm Tr}[\varphi] d\sigma\,,\ \forall \varphi\in H^1(\Omega).
$$
We define the operator $\mathcal A_2$ from $H^1(\Omega)$ to $H^1(\Omega)'$ as
\begin{equation*}
\mathcal A_2:=-\left(1+\frac{M\mu_j}{|\partial\Omega|}\right)\mathcal J\circ{\rm Tr}.
\end{equation*}
Since ${\rm Tr}$ is compact and $\mathcal J$ is bounded, $\mathcal A_2$  is also compact. It follows that the operator $\mathcal A:=\mathcal A_1+\mathcal A_2$ from $H^1(\Omega)$ to $H^1(\Omega)'$ is Fredholm of index zero, being the compact perturbation of an invertible operator.  Now we denote by $B(\lambda)$ the element of $H^1(\Omega)'$ defined by
$$
B(\lambda)[\varphi]:=\int_{\Omega}f\varphi dx+\int_{\partial\Omega}g_1\,{\rm Tr}[\varphi] d\sigma+\lambda\int_{\partial\Omega}g_2\,{\rm Tr}[\varphi] d\sigma\,,\ \varphi\in H^1(\Omega).
$$
Problem \eqref{u1_weak} is recast into: find $(\lambda,u)\in\mathbb R\times H^1(\Omega)$ such that
\begin{equation*}
\mathcal A[u]=B(\lambda).
\end{equation*}
The kernel of $\mathcal A$ is finite dimensional and it is the space of those $u^*$ such that
$$
\int_{\Omega}\nabla u^*\cdot\nabla\varphi dx-\frac{M\mu_j}{|\partial\Omega|}\int_{\partial\Omega}u^*\,{\rm Tr}[\varphi] d\sigma=0\ \ \forall\varphi\in H^1(\Omega).
$$
Since we have assumed that $\mu_j$ is a simple eigenvalue associated with the eigenfunction $u_j$, it follows that the kernel of $\mathcal A$ coincides with the one dimensional subspace of $H^1(\Omega)$ generated by $u_j$. Therefore, problem \eqref{u1_problem_simple} has solution if and only if $B(\lambda)$ satisfies the equality
$$
B(\lambda)[u_j]=\int_{\Omega}fu_j dx+\int_{\partial\Omega}g_1u_j d\sigma+\lambda\int_{\partial\Omega}g_2u_j d\sigma=0.
$$
Since we have also assumed that $\int_{\partial\Omega}g_2u_jd\sigma\ne0$, it follows that problem \eqref{u1_weak} has solution if and only if $\lambda$ is given by \eqref{lambda_appendix}. To prove the last statement of the theorem we observe that the solution $u$ of problem \eqref{u1_weak} is defined up to elements in the kernel of  $\mathcal A$, which is generated by $u_j$.
\endproof
\end{proposition}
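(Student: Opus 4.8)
The plan is to recast the weak formulation \eqref{u1_weak} as an operator equation in the dual space $H^1(\Omega)'$ and then to invoke the Fredholm alternative. First I would introduce the bounded linear operator $\mathcal{A}_1\colon H^1(\Omega)\to H^1(\Omega)'$ given by $\mathcal{A}_1[u][\varphi]:=\int_\Omega\nabla u\cdot\nabla\varphi\,dx+\int_{\partial\Omega}u\varphi\,d\sigma$; its associated bilinear form is continuous and coercive on $H^1(\Omega)$ (the quantity $\|\nabla u\|_{L^2(\Omega)}^2+\|u\|_{L^2(\partial\Omega)}^2$ being an equivalent square norm on $H^1(\Omega)$), so by the Lax--Milgram theorem $\mathcal{A}_1$ is a homeomorphism. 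I would then isolate the lower-order boundary term as $\mathcal{A}_2:=-\bigl(1+\tfrac{M\mu_j}{|\partial\Omega|}\bigr)\mathcal{J}\circ\mathrm{Tr}$, where $\mathrm{Tr}\colon H^1(\Omega)\to L^2(\partial\Omega)$ is the trace operator and $\mathcal{J}\colon L^2(\partial\Omega)\to H^1(\Omega)'$ is the natural embedding $\mathcal{J}[v][\varphi]:=\int_{\partial\Omega}v\,\mathrm{Tr}[\varphi]\,d\sigma$. Since $\partial\Omega$ is smooth and $\Omega\subset\mathbb R^2$, the operator $\mathrm{Tr}$ factors through the compact embedding $H^{1/2}(\partial\Omega)\hookrightarrow L^2(\partial\Omega)$ and hence is compact, so $\mathcal{A}_2$ is compact. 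Therefore $\mathcal{A}:=\mathcal{A}_1+\mathcal{A}_2$, being a compact perturbation of an invertible operator, is Fredholm of index zero, and \eqref{u1_weak} is equivalent to the equation $\mathcal{A}[u]=B(\lambda)$, where $B(\lambda)\in H^1(\Omega)'$ is the functional $B(\lambda)[\varphi]:=\int_\Omega f\varphi\,dx+\int_{\partial\Omega}g_1\,\mathrm{Tr}[\varphi]\,d\sigma+\lambda\int_{\partial\Omega}g_2\,\mathrm{Tr}[\varphi]\,d\sigma$.

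Next I would identify $\ker\mathcal{A}$. By construction, $u^*\in\ker\mathcal{A}$ if and only if $\int_\Omega\nabla u^*\cdot\nabla\varphi\,dx=\tfrac{M\mu_j}{|\partial\Omega|}\int_{\partial\Omega}u^*\varphi\,d\sigma$ for every $\varphi\in H^1(\Omega)$, i.e. $u^*$ is a weak solution of the Steklov problem \eqref{Steklov} relative to the eigenvalue $\mu_j$. Since $\mu_j$ is simple by assumption, $\ker\mathcal{A}$ is the one-dimensional subspace spanned by $u_j$. The bilinear form $(u,\varphi)\mapsto\int_\Omega\nabla u\cdot\nabla\varphi\,dx-\tfrac{M\mu_j}{|\partial\Omega|}\int_{\partial\Omega}u\varphi\,d\sigma$ is symmetric, so $\mathcal{A}$ is self-adjoint under the canonical identifications and its cokernel is again generated by $u_j$; the compatibility condition of the Fredholm alternative thus reads $B(\lambda)[u_j]=0$, that is
\[
\int_\Omega f u_j\,dx+\int_{\partial\Omega}g_1 u_j\,d\sigma+\lambda\int_{\partial\Omega}g_2 u_j\,d\sigma=0.
\]
Since this is an affine equation in $\lambda$ whose coefficient $\int_{\partial\Omega}g_2 u_j\,d\sigma$ is nonzero, it is satisfied by exactly one value of $\lambda$, namely the one in \eqref{lambda_appendix}; hence \eqref{u1_problem_simple} has a weak solution if and only if $\lambda$ equals that value. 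Finally, for that value of $\lambda$, if $u$ and $u'$ are two solutions then $\mathcal{A}[u'-u]=B(\lambda)-B(\lambda)=0$, so $u'-u\in\ker\mathcal{A}=\mathrm{span}\{u_j\}$, which gives the last assertion with $u'=u+\alpha u_j$ for a suitable $\alpha\in\mathbb R$.

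I do not anticipate a genuinely difficult step: the argument is essentially bookkeeping once the operators are in place. The point deserving the most care is the claim that the cokernel of $\mathcal{A}$ coincides with its kernel, which rests on the symmetry of the underlying bilinear form (equivalently, on $\mathcal{A}_2$ being self-adjoint and $\mathcal{A}_1$ being symmetric and invertible); this is precisely what allows the solvability condition to be phrased solely in terms of the Steklov eigenfunction $u_j$. A secondary technical ingredient is the compactness of the trace operator, which is where the regularity of $\partial\Omega$ enters.
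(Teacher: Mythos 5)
Your proposal follows essentially the same route as the paper's proof: the decomposition $\mathcal A=\mathcal A_1+\mathcal A_2$ with $\mathcal A_1$ invertible by Lax--Milgram and $\mathcal A_2$ compact via the trace operator, the Fredholm alternative with kernel $\mathrm{span}\{u_j\}$, and the solvability condition $B(\lambda)[u_j]=0$ determining $\lambda$ uniquely. The only difference is that you make explicit the symmetry argument identifying the cokernel with the kernel, a point the paper leaves implicit; the proof is correct.
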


\section{}
In this section we consider the case when $\Omega$ coincides with the unit ball $B$ of $\mathbb R^2$. In this specific case the eigenvalues of problem \eqref{Steklov} are given by
\begin{equation*}
\mu_{2j-1}=\mu_{2j}=\frac{2\pi j}{M}\,,\ j\in\mathbb N\setminus\lbrace0\rbrace,
\end{equation*} 
while $\mu_{0}=0$  and, due to the symmetry of the problem, all the positive eigenvalues have multiplicity two (see, e.g., Girouard and Polterovich \cite{girouardpolterovich}). To investigate the problem, it is convenient to use polar coordinates $(r,\theta)\in[0,+\infty)\times[0,2\pi)$ in $\mathbb R^2$ and to introduce the corresponding change of variables $x=\phi_s(r,\theta)=(r\cos(\theta),r\sin(\theta))$. The eigenfunctions associated with the eigenvalue $\mu_{2j-1}=\mu_{2j}$ are the two-dimensional harmonic polynomials $u_{j,1}, u_{j,2}$ of degree $j$, which can be written in polar coordinates as
\begin{eqnarray*}
u_{j,1}(r,\theta)&=&r^j\cos(j\theta),\\
u_{j,2}(r,\theta)&=&r^j\sin(j\theta).
\end{eqnarray*}
Problem \eqref{Neumann} for $\Omega=B$ has been  considered in Lamberti and Provenzano \cite{lambertiprovenzano2,lambertiprovenzano1}. In such works it has been proved that all the eigenvalues of problem \eqref{Neumann} on $B$ have multiplicity which is an integer multiple of two, except the first one which is equal to zero and has multiplicity one. Moreover, for a fixed $j\in\mathbb N\setminus\lbrace 0\rbrace$, there exists $\varepsilon_{j}>0$ such that $\lambda_j(\varepsilon)$ has multiplicity two for all $\varepsilon\in(0,\varepsilon_{j})$ (see also Theorem \ref{convergence}). The positive eigenvalues of \eqref{Neumann} on $B$ can be labelled with two indexes $k$ and $l$ and denoted by $\lambda_{2k-1,l}(\varepsilon)=\lambda_{2k,l}(\varepsilon)$, for $k,l\in\mathbb N\setminus\lbrace 0\rbrace$. The corresponding eigenfunctions, which we denote by $u_{0,l,\varepsilon}, u_{k,l,\varepsilon,1}$ and $u_{k,l,\varepsilon,2}$ can be written in the following form
\begin{eqnarray*}
u_{0,l,\varepsilon}&=&R_{0,l}(r),\\
u_{k,l,\varepsilon,1}&=&R_{k,l}(r)\cos(k\theta),\\
u_{k,l,\varepsilon,2}&=&R_{k,l}(r)\sin(k\theta),
\end{eqnarray*}
where $R_{k,l}(r)$ are suitable linear combinations of Bessel Functions of the first and second species and order $k${}. Moreover, it has been proved that $\lambda_{2k-1,1}(\varepsilon)\rightarrow\mu_{2k-1}$, $\lambda_{2k,1}(\varepsilon)\rightarrow\mu_{2k}$, $\lambda_{2k-1,l}(\varepsilon)\rightarrow+\infty$, $\lambda_{2k,l}(\varepsilon)\rightarrow+\infty$ for $l\geq 2$, $u_{k,1,\varepsilon,1}\rightarrow u_{k,1}$ and $u_{\varepsilon,k,1,2}\rightarrow u_{k,2}$ in the $L^2(\Omega)$ sense, as $\varepsilon\rightarrow 0$.

We note that, in principle, Theorem \ref{asymptotic_eigenvalues} could not be applied to this case since all the eigenvalues are multiple. Nevertheless, we have the following result concerning the derivative of the eigenvalues of \eqref{Neumann} at $\varepsilon=0$ when $\Omega=B$.

\begin{theorem}[Lamberti and Provenzano \cite{lambertiprovenzano2,lambertiprovenzano1}]\label{thmball}
For the eigenvalues of problem \eqref{Neumann} on the unit ball $B$ we have the following asymptotic expansion
\begin{equation}\label{asymptotic_ball}
\begin{split}
\lambda_{2j-1,1}(\varepsilon)&=\mu_{2j-1}+\left(\frac{2j\mu_{2j-1}}{3}+\frac{\mu_{2j-1}^2}{2(j+1)}\right)\varepsilon+O(\varepsilon^2)\\
&=\frac{2\pi j}{M}+\frac{2j^2 \pi}{M}\left(\frac{2}{3}+\frac{\pi}{M(1+j)}\right)\varepsilon+O(\varepsilon^2),
\end{split}
\end{equation}
as $\varepsilon\rightarrow 0$. The same formula holds if we substitute $\lambda_{2j-1,1}(\varepsilon)$ and $\mu_{2j-1}$ with $\lambda_{2j,1}(\varepsilon)$ and $\mu_{2j}$ respectively.
\end{theorem}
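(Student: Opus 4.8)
The plan is to reduce the ball to a situation in which the proof of Theorem~\ref{asymptotic_eigenvalues} carried out in Sections~\ref{sec:4}--\ref{sec:5} applies with no change, and then to specialize the closed formula \eqref{top_der_formula}. Since $\rho_\varepsilon$ is radial, the resolvent operator $\mathcal A_\varepsilon$ commutes with rotations and therefore, for each $k\in\mathbb N\setminus\{0\}$, it leaves invariant the closed subspace $V_k\subset\mathcal H_\varepsilon(B)$ consisting of the functions of the form $f(r)\cos(k\theta)$ (and, separately, the subspace of the functions $f(r)\sin(k\theta)$). On $V_k$ the operator $\mathcal A_\varepsilon|_{V_k}$ is still compact, self-adjoint and positive (cf.~Proposition~\ref{Ae}), its eigenvalues are the numbers $(1+\lambda_{2k-1,l}(\varepsilon))^{-1}$ with $l\in\mathbb N\setminus\{0\}$, each of them simple within $V_k$, and the limiting Steklov operator restricted to $V_k$ has the simple eigenvalue $\mu_{2k-1}=2\pi k/M$, whose eigenspace in $V_k$ is spanned by $u_k:=\pi^{-1/2}r^k\cos(k\theta)$, normalized so that $\int_{\partial B}u_k^2\,d\sigma=1$. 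Because $\lambda_{2k-1,l}(\varepsilon)\to+\infty$ as $\varepsilon\to0$ for $l\ge2$ (see \cite{lambertiprovenzano1}), the separation Lemmas~\ref{only} and~\ref{only2} hold on $V_k$, so the entire scheme of Sections~\ref{sec:4}--\ref{sec:5} can be repeated verbatim with $\mathcal H_\varepsilon(\Omega)$ replaced by $\mathcal H_\varepsilon(B)\cap V_k$.

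Concretely, I would build the auxiliary functions $w_k$, $u_k^1$, $v_{k,\varepsilon}$ and $v_{k,\varepsilon}^1$ exactly as in \eqref{w0}, \eqref{u1_problem_simple_true} and \eqref{w1}, starting from $u_k$. Since $\kappa$ is constant on $\partial B$, the data defining these functions are obtained from $u_k$ by operations that do not change the angular frequency $k$, so that $w_k$, $v_{k,\varepsilon}$, $v_{k,\varepsilon}^1$ lie in $V_k$; and since the Fredholm operator used in Proposition~\ref{A1} commutes with rotations, also $u_k^1\in V_k$. Applying Lemma~\ref{lemma_fondamentale} to $\mathcal A_\varepsilon|_{V_k}$ with the test element $u_k+\varepsilon v_{k,\varepsilon}+\varepsilon u_k^1+\varepsilon^2 v_{k,\varepsilon}^1$, and reproducing the estimates of Lemmas~\ref{C1} and~\ref{ultimolemmastep1}, of the lemma that establishes \eqref{condition_2}, and of Lemma~\ref{propedeutico_step2} --- all of which only become easier when the test function $\varphi$ is confined to $V_k$ --- one obtains $\lambda_{2k-1,1}(\varepsilon)=\mu_{2k-1}+\varepsilon\mu^1+O(\varepsilon^2)$ with $\mu^1$ given by \eqref{top_der_formula}. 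The same argument on the $\sin(k\theta)$-subspace gives the statement for $\lambda_{2k,1}(\varepsilon)$.

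It then remains to evaluate \eqref{top_der_formula} for $\Omega=B$. Here $|\Omega|=\pi$, $|\partial\Omega|=2\pi$, $\kappa\equiv1$ and $K=2\pi$, while $\int_B u_k^2\,dx=\tfrac{1}{2(k+1)}$ (integrate $r^{2k+1}$) and $\int_{\partial B}u_k^2\,\kappa\,d\sigma=1$. The two curvature terms cancel, $\tfrac{\mu_{2k-1}}{2}\int_{\partial B}u_k^2\kappa\,d\sigma-\tfrac{K\mu_{2k-1}}{2|\partial\Omega|}=\tfrac{\mu_{2k-1}}{2}-\tfrac{\mu_{2k-1}}{2}=0$, and one is left with $\mu^1=\tfrac{\pi\mu_{2k-1}}{M}\cdot\tfrac{k}{k+1}+\tfrac{M\mu_{2k-1}^2}{3\pi}$; inserting $\mu_{2k-1}=2\pi k/M$ rewrites this first as $\tfrac{2k\mu_{2k-1}}{3}+\tfrac{\mu_{2k-1}^2}{2(k+1)}$ and then as the explicit expression displayed in \eqref{asymptotic_ball}.

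The main obstacle is not this (short) computation but the care needed to make the symmetry reduction rigorous: one must check that $\mathcal A_\varepsilon$ genuinely leaves $V_k$ invariant as a closed subspace of the $\varepsilon$-dependent Hilbert space $\mathcal H_\varepsilon(B)$, that $\mathcal A_\varepsilon|_{V_k}$ inherits all the properties of Proposition~\ref{Ae}, that the separation statements of Lemmas~\ref{only}--\ref{only2} survive the restriction (using the divergence $\lambda_{2k-1,l}(\varepsilon)\to+\infty$ for $l\ge2$ recalled above), and that the solution $u_k^1$ of \eqref{u1_problem_simple_true} furnished by Proposition~\ref{A1} does lie in $V_k$. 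A more computational alternative --- the route taken in \cite{lambertiprovenzano2,lambertiprovenzano1} --- is to write $R_{k,l}$ explicitly as a combination of the Bessel functions $J_k$ and $Y_k$ in the outer strip and of $J_k$ in the inner disc, to impose continuity of $R$ and of $R'$ at $r=1-\varepsilon$ together with the Neumann condition $R'(1)=0$, and to expand the resulting transcendental equation for $\lambda$ in powers of $\varepsilon$.
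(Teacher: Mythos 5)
Your proposal is correct, and the final evaluation of \eqref{top_der_formula} on the ball checks out: with $|B|=\pi$, $|\partial B|=2\pi$, $\kappa\equiv1$, $K=2\pi$, $\int_Bu_k^2\,dx=\tfrac{1}{2(k+1)}$ the curvature terms cancel and one indeed gets $\mu^1=\tfrac{2k\mu_{2k-1}}{3}+\tfrac{\mu_{2k-1}^2}{2(k+1)}$. However, your route is not the one the paper relies on. The paper does not reprove Theorem \ref{thmball}: it quotes it from \cite{lambertiprovenzano2,lambertiprovenzano1}, where the proof is the ``computational alternative'' you mention last --- writing $R_{k,l}$ in terms of Bessel functions, reducing \eqref{Neumann} to a transcendental equation $\mathcal F(\lambda,\varepsilon)=0$, and expanding in $\varepsilon$ with remainder estimates and cross-product identities for Bessel functions. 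What you develop as your main argument is instead an elaboration of the two-sentence remark the authors make after the theorem in Appendix B, namely that the multiplicity obstruction can be removed by passing to a subspace on which $\mu_{2j-1}$ is simple, after which Theorem \ref{asymptotic_eigenvalues} applies and reproduces \eqref{asymptotic_ball}. Your version of that reduction (Fourier decomposition into the $\cos(k\theta)$ and $\sin(k\theta)$ modes, invariance of $V_k$ under $\mathcal A_\varepsilon$, simplicity of the relevant eigenvalue within $V_k$, divergence of $\lambda_{2k-1,l}(\varepsilon)$ for $l\ge2$ to restore the separation Lemmas \ref{only} and \ref{only2}) is sound and arguably cleaner than the paper's suggested complement of a single eigenfunction; its advantage is that it yields the first-order coefficient with no Bessel analysis, while the Bessel route yields more (explicit eigenvalue branches for all $l$, higher-order terms, and the $N>2$ generalization). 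One small point to tighten: commuting with rotations alone does not force invariance of the cosine-only subspace $V_k$ (rotations mix $\cos(k\theta)$ and $\sin(k\theta)$); you should either invoke the reflection $\theta\mapsto-\theta$ as well, or argue directly that the bilinear form \eqref{bilinear_eps} decouples across Fourier modes in $\theta$ because $\rho_\varepsilon$ is radial. This is a presentational gap, not a mathematical one.
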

The proof of Theorem \ref{thmball} is strictly related to the fact that $\Omega$ is a ball and relies on the use of Bessel functions which allow to recast problem \eqref{Neumann} in the form of an equation $\mathcal F(\lambda,\varepsilon)=0$ in the unknowns $\lambda,\varepsilon\in\mathbb R$. The method used in \cite{lambertiprovenzano2} requires standard but lengthy computations, suitable Taylor's expansions and estimates on the corresponding remainders, as well as recursive formulas for the cross-products of Bessel functions and their derivatives.

We note that the first term in the asymptotic expansion of all the eigenvalues of \eqref{Neumann} on $B$ is positive, therefore locally, near the limiting problem \eqref{Steklov}, the eigenvalues are decreasing. Hence, we can say that the Steklov eigenvalues $\mu_{j}$ minimize the Neumann eigenvalues $\lambda_j(\varepsilon)$ for $\varepsilon$ small enough. We note that this does not prove global monotonicity of $\lambda_j(\varepsilon)$, which in fact does not hold  for any $j$; see Figures \ref{fig1} and \ref{fig2}. 

We now observe that, if we plug  $u_j=\pi^{-\frac{1}{2}}(r^j\cos(j\theta))\circ\phi_s^{(-1)}$ into formula \eqref{top_der_formula} and we recall that   the mean curvature $\kappa$ of $\partial B$ is constant end equals $1$, then we re-obtain  equality \eqref{asymptotic_ball}. So we can say that, in a sense, Theorem  \ref{asymptotic_eigenvalues}  continues to hold also in the case when $\Omega$ is a ball, despite of the fact that the eigenvalues are in such case multiple.  This is not surprising. In fact, we could have replaced through all the paper the space $H^1(\Omega)$ with the space $H^1_j(\Omega)$ of those functions $u$ in $H^1(\Omega)$ which are orthogonal to $(r^j\cos(j\theta))\circ\phi_s^{(-1)}$ with respect to the $H^1(\Omega)$ scalar product. In this way the eigenvalue $\mu_{2j-1}$ becomes simple and an argument based on Theorem \ref{asymptotic_eigenvalues} could be applied to study the asymptotic behavior.

We also remark that formula \eqref{asymptotic_ball} for the derivatives of the eigenvalues when $\Omega=B$ has been generalized to dimension $N>2$ in \cite{lambertiprovenzano2}. Again, the proof relies on the use of Bessel functions and explicit computations. 

The method used in the present paper is more general and allows to find a formula for the derivative of the eigenvalues $\lambda(\varepsilon)$ of problem \eqref{Neumann} for a quite wide class of domains in $\mathbb R^2$. A generalization of such formula for domains in $\mathbb R^N$ for $N>2$, the boundary of which can be globally parametrized with the unit sphere $S^{N-1}\subset \mathbb R^N$, will be part of a future work.

\section*{}
\begin{figure}[!ht]
 \centering
    \includegraphics[width=0.6\textwidth]{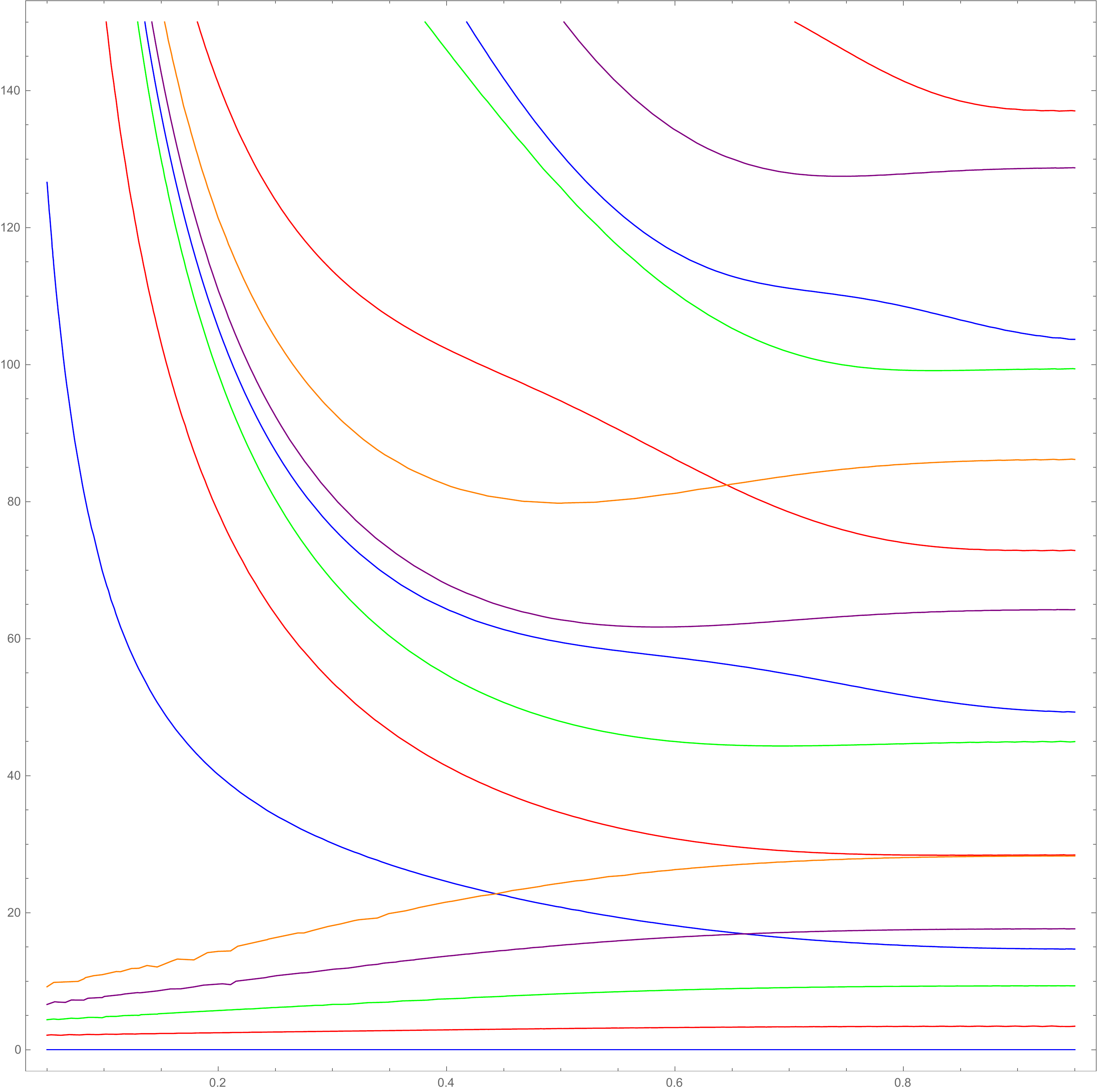}
		 \caption{{ {{ $\lambda_{2k-1,l}$}} with $M=\pi$ in the range $(\varepsilon,\lambda)\in(0,1)\times(0,150)$. In particular blue ($k=0,l=1,2,3,4$), red ($k=1,l=1,2,3,4$), green ($k=2,l=1,2,3$), purple ($k=3,l=1,2,3$), orange ($k=4,l=1,2$).} 
	}
		\label{fig1}
\end{figure}

\begin{figure}[!ht]
  \centering
   
      \includegraphics[width=0.6\textwidth]{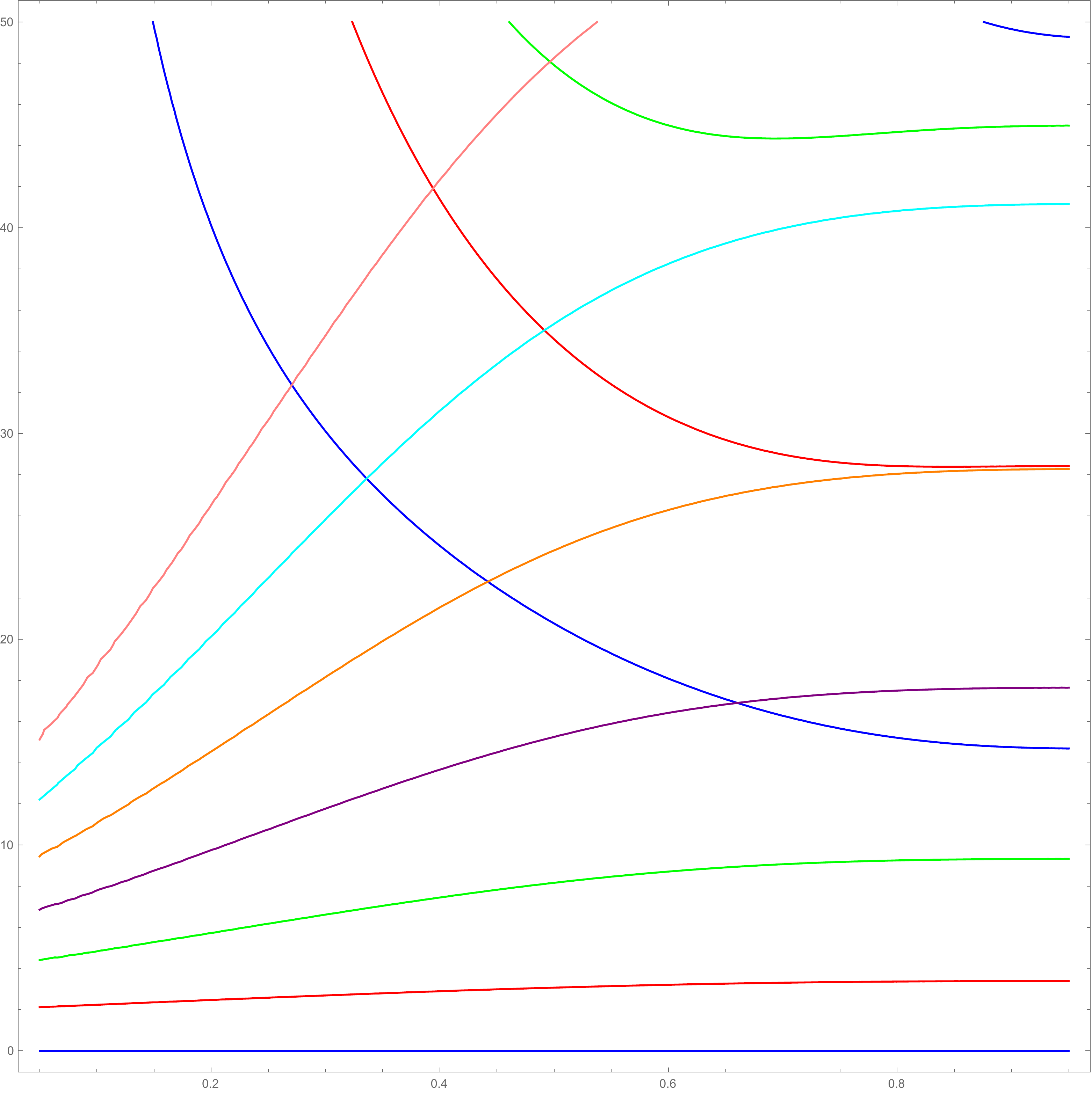}
		
  \caption{ {{$\lambda_{2k-1,l}$}} with $M=\pi$ in the range $(\varepsilon,\lambda)\in(0,1)\times(0,50)$. In particular blue ($k=0,l=1,2$), red ($k=1,l=1,2$), green ($k=2,l=1,2$), purple ($k=3,l=1$), orange ($k=4,l=1$), blue ($k=5,l=1$), pink ($k=6,l=1$) .}

					\label{fig2}
\end{figure}

\end{appendices}

\section*{Acknowledgements}	
The authors are deeply thankful to Professor Pier Domenico Lamberti and to Professor Sergei A.~Nazarov for the fruitful discussions on the topic. The authors also thank  the Center for Research and Development in Mathematics and Applications (CIDMA) of the University of Aveiro for the hospitality offered during the development of the work. In addition, the authors acknowledge the support of `Progetto di Ateneo: Singular perturbation problems for differential operators -- CPDA120171/12' - University of Padova. Matteo Dalla Riva acknowledges the support of HORIZON 2020 MSC EF project FAANon (grant agreement MSCA-IF-2014-EF-654795) at the University of Aberystwyth, UK. Luigi Provenzano acknowledges the financial support from the research project `INdAM GNAMPA Project 2015 - Un approccio funzionale analitico per problemi di perturbazione singolare e di omogeneizzazione'. Luigi Provenano is member of the Gruppo Nazionale
per l'Analisi Matematica, la Probabilit\`a e le loro Applicazioni (GNAMPA) of the Istituto Nazionale di Alta Matematica (INdAM).

%%%%%%%%%%%%%%%%%%%%%%%%%%%%%%%%%%%%%%%%%%%%%%%%%%%%%%%%%%%%%%%%%%%%%%%%%%%%%%%%%%%%%%%%%%%%%%%%%%%%%%%%%%%%%%%%%%%%%%%%%%%%%
%%%%%%%%%%%%%%%%%%%%%%%%%%%%%%%%%%%%%%%%%%%%%%%%%%%%%%%%%%%%%%%%%%%%%%%%%%%%%%%%%%%%%%%%%%%%%%%%%%%%%%%%%%%%%%%%%%%%%%%%%%%%%
%%%%%%%%%%%%%%%%%%%%%%%%%%%%%%%%%%%%%%%%% BIBLIOGRAPHY %%%%%%%%%%%%%%%%%%%%%%%%%%%%%%%%%%%%%%%%%%%%%%%%%%%%%%%%%%%%%%%%%%%%%%
%%%%%%%%%%%%%%%%%%%%%%%%%%%%%%%%%%%%%%%%%%%%%%%%%%%%%%%%%%%%%%%%%%%%%%%%%%%%%%%%%%%%%%%%%%%%%%%%%%%%%%%%%%%%%%%%%%%%%%%%%%%%%
%%%%%%%%%%%%%%%%%%%%%%%%%%%%%%%%%%%%%%%%%%%%%%%%%%%%%%%%%%%%%%%%%%%%%%%%%%%%%%%%%%%%%%%%%%%%%%%%%%%%%%%%%%%%%%%%%%%%%%%%%%%%%

%\bibliography{bibliography}{}
%\bibliographystyle{abbrv}

\def\cprime{$'$} \def\cprime{$'$} \def\cprime{$'$} \def\cprime{$'$}
  \def\cprime{$'$}

\begin{comment}

\end{comment}

\end{document}